\definecolor{refkey}{gray}{.5}   % graylevel for refs  
\definecolor{labelkey}{gray}{.5} % graylevel for labels  
\definecolor{Red}{rgb}{1,0,0}
\newtheorem{thm}{Theorem}[section]  
\newtheorem{prop}[thm]{Proposition}  
\newtheorem{lem}[thm]{Lemma}  
\newtheorem{cor}[thm]{Corollary}  
\newtheorem{conj}[thm]{Conjecture}  
\theoremstyle{definition}  
\newtheorem{defn}[thm]{Definition}  
\theoremstyle{remark}  
\newtheorem{remk}[thm]{Remark}  
\newtheorem{remks}[thm]{Remarks}  
\newtheorem{exm}[thm]{Example}  
\newtheorem{exms}[thm]{Examples}  
\newtheorem{notat}[thm]{Notation}  
\numberwithin{equation}{section}
\newcommand{\thmref}{Theorem~\ref}  
\newcommand{\propref}{Proposition~\ref}  
\newcommand{\corref}{Corollary~\ref}  
\newcommand{\lemref}{Lemma~\ref}
\newcommand{\sI}{{\mathcal I}}  
\newcommand{\sK}{{\mathcal K}}
\newcommand{\sO}{{\mathcal O}}  
\newcommand{\sQ}{{\mathcal Q}}  
\newcommand{\sR}{{\mathcal R}}
\newcommand{\sX}{{\mathcal X}}  
\newcommand{\sY}{{\mathcal Y}}  
\newcommand{\sZ}{{\mathcal Z}}
\renewcommand{\P}{{\mathbb P}}
\newcommand{\Z}{{\mathbb Z}}
\newcommand{\fm}{{\mathfrak m}}
\newcommand{\fM}{{\mathfrak M}}
\newcommand{\CH}{{\rm CH}}
\newcommand{\surj}{\twoheadrightarrow}  
\newcommand{\inj}{\hookrightarrow}  
\newcommand{\red}{{\rm red}}
\newcommand{\Pic}{{\rm Pic}}
\newcommand{\Spec}{{\rm Spec \,}}  
\newcommand{\sing}{{\rm sing}}
\newcommand{\divf}{{\rm div}}
\newcommand{\cyc}{{\operatorname{\rm cyc}}}
\newcommand{\et}{{\text{\'et}}}  
\newcommand{\ds}{{/\kern-3pt/}}
\newcommand{\ov}{\overline}
\renewcommand{\dim}{\text{\rm dim}}  
\newcommand{\tuborg}{\left\{\begin{array}{ll}}  
\newcommand{\sluttuborg}{\end{array}\right.}
\newcommand{\wt}{\widetilde}
\def\cO{\mathcal{O}}
\def\ol#1{\overline{#1}}
\definecolor{winered}{rgb}{0.8,0,0}  
\newcounter{elno}     
\newenvironment{romanlist}{  
                         \begin{list}{\roman{elno})  
                                     }{\usecounter{elno}}  
                      }{  
                         \end{list}}  
\newcounter{elno-abc}     
\newcounter{elno-abc-prime}
\begin{document}  
\title{Rigidity for relative 0-cycles}  
\author{Federico Binda and Amalendu Krishna}  
\address{Dipartimento di Matematica ``Federigo Enriques'', Università degli Studi di Milano, Via Cesare Saldini 50,    20133
Milan, Italy}  
\email{federico.binda@unimi.it}  
\address{School of Mathematics, Tata Institute of Fundamental Research,    
1 Homi Bhabha Road, Colaba, Mumbai, India}  
\email{amal@math.tifr.res.in}

\thanks{F.B.~was partially supported by the DFG SFB/CRC 1085 ``Higher Invariants'', University of Regensburg, during the preparation of this paper.}  
%\keywords{algebraic cycles, Chow groups, singular schemes}  

%\dedicatory{}  

\keywords{Algebraic cycles, Chow groups,  
singular schemes, {\'e}tale cohomology}

\subjclass[2010]{Primary 14C25; Secondary 13F35, 14F30, 19E15}  
  
\maketitle

\begin{quote}\emph{Abstract.}   
We present a relation between the classical Chow group of relative 0-cycles on  
a regular scheme $\sX$, projective and flat over an excellent Henselian  
discrete valuation ring,  
and the Levine-Weibel Chow group of 0-cycles on the special fiber.  
We show that these two Chow groups are isomorphic with finite coefficients under extra assumptions. This generalizes  
a result of Esnault, Kerz and Wittenberg.   
%We also show that the {\'e}tale cycle class map for relative 0-cycles on $\sX$  
%is an isomorphism. This generalizes a result of Saito and Sato to   
%the case of bad reduction.  
\end{quote}  
%\end{abstract}  
\setcounter{tocdepth}{1}  
%\maketitle  
\tableofcontents    
  
%\maketitle  
%\tableofcontent  
\section{Introduction}\label{sec:Intro}  
Let $A$ be an excellent Henselian discrete valuation ring with perfect residue  
field $k$ of exponential characteristic $p \ge 1$.   
Let $\sX$ be a regular scheme which is projective and flat over $A$.  
Let $X \subset \sX$ be the reduced special fiber.   
If the map $\sX \to \Spec(A)$ is an isomorphism, then   
Gabber's generalization of Suslin's rigidity theorem  
\cite{Gabber} says that the algebraic $K$-theory  
of $\sX$ and $X$ are isomorphic with coefficients prime to $p$.  
However, this rigidity theorem does not hold when the relative  
dimension of $\sX$ over $A$ is positive. One can then ask if it is possible to  
prove such an isomorphism for the higher Chow groups (which are the  
building block of $K$-theory in view of \cite{FS}) in certain bi-degrees.  
This is the context of the present work.  
  
Let $\CH_1(\sX)$ denote the classical   
Chow group \cite{Fulton} of 1-dimensional cycles on $\sX$.  
If $\sX$ is smooth over $A$ and $k$ is finite or algebraically closed,   
Saito and Sato \cite[Corollary~0.10]{SS} showed that   
there is a restriction map $\rho: {\CH_1(\sX)} \otimes_{\Z} {\Z}/{m \Z} \to   
{\CH_0(X)}\otimes_{\Z} {\Z}/{m \Z}$ which is an isomorphism, whenever  
$m$ is prime to the exponential characteristic of $k$.    
  
As part of their proof of the above restriction isomorphism,  
Saito and Sato showed that the   
{\'e}tale cycle class map for ${\CH_1(\sX)}\otimes_{\Z} {\Z}/{m \Z}$  
is an isomorphism more generally for every model $\sX\to \Spec(A)$ with   
semi-stable reduction, i.e., such that the reduced special fiber $X$   
has simple normal crossing (again, under the assumption that the residue   
field  $k$ is finite or algebraically closed).  
As an application of this, they proved that if $K$ is a local field with  
finite residue field and $Y$ is smooth and projective over $K$, then  
$\CH_0(Y)\otimes_{\Z} {\Z}/{m \Z}$ is finite, originally a conjecture due to  
Colliot-Thélène \cite{CT95}.  
  
\begin{comment}  
 No assumption on the model $\sX$ of $Y$ is   necessary for the finiteness result of Saito and Sato: in fact, in view of the étale realization isomorphism of \cite[Theorem 1.16]{SS}, the finiteness  of the group ${\CH_1(\sX)}\otimes_{\Z} {\Z}/{m \Z}$, which implies the finiteness of $\CH_0(Y)\otimes_{\Z} {\Z}/{m \Z}$, holds more generally for every family $\sX\to \Spec(A)$ with semi-stable reduction, i.e.~such that the reduced special fiber $X$ has simple normal crossing (again, under the assumption that the residue field  $k$ is finite or algebraically closed). However, in this greater generality the comparison between ${\CH_1(\sX)} \otimes_{\Z} {\Z}/{m \Z}$ and ${\CH_0(X)} \otimes_{\Z} {\Z}/{m \Z}$ no longer holds.  
\end{comment}  
  
Inspired by an argument originally due to Bloch and discussed in \cite[Appendix A]{EW}, the result of Saito and Sato  was revisited and generalized by Esnault, Kerz and Wittenberg  in \cite{EKW}. Under the assumption that the reduced special fiber $X$ is a simple  
normal crossing divisor in $\sX$, it was observed in \cite{EKW} that it is possible to replace the classical Chow group  
(see \cite{Fulton})   
$\CH_0(X)$ of the special fiber $X$ with the Friedlander-Voevodsky  
\cite{FV} motivic cohomology $H^{2d}(X, \Z(d))$, where $d = \dim_k(X)$, and still prove the existence of an isomorphism   
\begin{equation}\label{eq:restri-EKW-intro}  
 \rho: {\CH_1(\sX)} \otimes_{\Z} {\Z}/{m \Z} \to   
 H^{2d}(X,  {\Z}/{m \Z} (d)),  
 \end{equation}  
provided that some extra assumptions on $m$ or on the residue field are satisfied. This approach allowed  Esnault, Kerz and Wittenberg to generalize the restriction isomorphism of Saito and Sato by allowing the field $k$ to belong to a bigger class than just finite or algebraically  
closed fields, and the reduced special fiber to be a simple normal crossing  
divisor than just smooth. In fact, in the case of good reduction, they  
showed that $\rho$ is an isomorphism for any perfect residue field $k$.  
Note that there is always a surjective map  
 $H^{2d}(X, \Z(d)) \surj \CH_0(X)$ for a simple normal crossings divisor   
$X \subset \sX$. But this is not in general an isomorphism,  
even with finite coefficients.  
  
%  an isomorphism between the group of 1-cycles on the model and  
% In order to prove their generalization of Saito and Sato's  
% theorem,  
% Esnault, Kerz and Wittenberg had to replace the classical Chow group  
% (see \cite{Fulton})  
% $\CH_0(X)$ of the special fiber $X$ by the Friedlander-Voevodsky  
% \cite{FV} motivic cohomology $H^{2d}(X, \Z(d))$, where $d = \dim_k(X)$.  
% It is not hard to see that there is a surjective map  
% $H^{2d}(X, \Z(d)) \surj \CH_0(X)$ which is not in general an isomorphism,  
% even with finite coefficients.  
  
In this paper, we show that if we further replace the $(2d,d)$ motivic cohomology group of  
the reduced special fiber $X$ by its Levine-Weibel Chow group of  0-cycles  
\cite{LW},   
then the restriction isomorphism of   
Saito and Sato holds without any condition on $X$  
whenever the residue field is algebraically closed. More generally,  
we prove the following generalization of  
\cite{EKW} for arbitrary  perfect residue fields.  
  
We let $\sZ_1(\sX)$ denote the free abelian group on the set of integral  
1-dimensional closed subschemes of $\sX$ and let $\sZ^g_1(\sX)$ denote  
the subgroup of $\sZ_1(\sX)$ generated by integral cycles which are flat  
over $A$ and do not meet the singular locus of $X$. It follows from the  
moving lemma of Gabber, Liu and Lorenzini \cite{GLL} that the composite  
map \[\sZ^g_1(\sX) \inj \sZ_1(\sX) \surj \CH_1(\sX)\] is surjective.  
For any reduced quasi-projective scheme $Y$ over a field, let $\CH^{LW}_0(Y)$  
denote the Levine-Weibel Chow group of 0-cycles on $Y$, first introduced in \cite{LW}.  
It is a quotient of the free abelian group $\sZ_0(Y\setminus Y_{\rm sing})$ of $0$-cycles supported in the regular locus of $Y$ (see \ref{sec:Chow-grp} for a reminder of its definition).  
Let $m$ be an integer prime to the exponential characteristic of $k$ and  
let $\Lambda = {\Z}/{m\Z}$. For an abelian group $M$, write  
$M_{\Lambda} = M \otimes_{\Z} \Lambda$.  Then the following holds.

\begin{thm}\label{thm:Main-1}  
Let $\sX$ be a regular scheme which is projective and flat over an excellent   
Henselian discrete valuation ring  with perfect residue field.  
Let $X$ denote the reduced special fiber of $\sX$.   
Then there exists a commutative diagram  
\begin{equation}\label{eqn:Main-1-0}  
\begin{tikzcd}  
\sZ^g_1(\sX)_{\Lambda}  \arrow[r, "\wt{\rho}"]  \arrow[d, twoheadrightarrow]  & \sZ_0(X \setminus X_{\sing})_{\Lambda} \arrow[d, twoheadrightarrow] \\  
\CH_1(\sX)_\Lambda & \CH^{LW}_0(X)_{\Lambda} \arrow[l, "\gamma"]  
\end{tikzcd}  
\end{equation}  
such that $\gamma$ is surjective.  
\end{thm}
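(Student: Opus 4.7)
The plan is to define the horizontal map $\wt\rho$ by intersecting with the Cartier divisor $X \subset \sX$, to construct the lift map $\gamma$ by choosing a horizontal curve through each smooth point of $X$, and to deduce surjectivity of $\gamma$ from commutativity of the square together with the Gabber--Liu--Lorenzini moving lemma already cited in the excerpt.

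Since $\sX$ is regular and $X$ is a divisor, $X$ is an effective Cartier divisor in $\sX$. For an integral $C \in \sZ^g_1(\sX)$, the Fulton intersection $C \cdot X$ is a well-defined $0$-cycle on $C \cap X$; flatness of $C$ over $A$ together with the condition $C \cap X_{\sing} = \emptyset$ places it in $\sZ_0(X \setminus X_{\sing})$. I would set $\wt\rho(C) := C \cdot X$ and extend $\Lambda$-linearly. To define $\gamma$, fix a closed point $x \in X \setminus X_{\sing}$. The regular local ring $\sO_{\sX,x}$ admits a regular system of parameters $(\pi, t_1, \ldots, t_d)$ where $\pi$ is a uniformizer of $A$, since $X = V(\pi)$ locally near $x$ and $X$ is itself regular there. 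The ideal $(t_1, \ldots, t_d)$ defines a horizontal regular curve germ through $x$ meeting $X$ transversally. Using the projectivity of $\sX$ and a Bertini-type argument (or Gabber--Liu--Lorenzini applied directly on $\sX$), I would globalize this germ to an integral $C_x \in \sZ^g_1(\sX)$ with $\wt\rho(C_x) = [x]$, set $\gamma([x]) := [C_x] \in \CH_1(\sX)_{\Lambda}$, and extend $\Lambda$-linearly to all of $\sZ_0(X \setminus X_{\sing})_{\Lambda}$.

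Well-definedness of $\gamma$ on $\CH^{LW}_0(X)_{\Lambda}$ requires two things: (i) the class $[C_x]$ is independent of the choice of lift, and (ii) Levine--Weibel rational equivalences on $X$ become trivial in $\CH_1(\sX)_{\Lambda}$. Point (i) follows from a moving lemma on the regular projective scheme $\sX$ applied to any two lifts of $[x]$. Point (ii) is the crux: a Levine--Weibel relation arises from the divisor of a rational function on a ``Cartier curve'' $Y \subset X$ which is smooth along its intersection with the rest of $X$, and one must lift $Y$ to a horizontal surface $\sY \subset \sX$ flat over $A$, extend the rational function from $Y$ to $\sY$, and transfer the resulting divisorial equivalence from $\sY$ to $\sX$. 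Commutativity of the square reduces, on an integral $C \in \sZ^g_1(\sX)$ with $\wt\rho(C) = \sum_i n_i [x_i]$, to the equality $[C] = \sum_i n_i [C_{x_i}]$ in $\CH_1(\sX)_{\Lambda}$, which is another deformation argument expressing $C$ as a sum of unit-multiplicity horizontal lifts of its fiber points. Granting commutativity, surjectivity of $\gamma$ is then immediate: by the moving lemma of Gabber--Liu--Lorenzini, the left vertical map is surjective, so each class in $\CH_1(\sX)_{\Lambda}$ equals $\gamma$ of the image of a horizontal representative under $\wt\rho$.

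The main obstacle is point (ii) above: lifting a Cartier curve $Y \subset X$ together with a rational function witnessing a Levine--Weibel relation to a horizontal surface in $\sX$, while maintaining flatness over $A$ and avoiding $X_{\sing}$ at the right places. The hensel property of $A$ and the regularity of $\sX$ are essential here, and this step is where the technical backbone of the paper (approximation, Bertini over henselian bases, control of intersections with $X_{\sing}$) must be invoked. A secondary but related obstacle is the commutativity statement $[C] = \sum_i n_i [C_{x_i}]$, which requires a moving lemma for $1$-cycles on $\sX$ that is finer than the surjectivity statement of Gabber--Liu--Lorenzini alone.
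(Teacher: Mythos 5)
Your skeleton matches the paper's: $\wt{\rho}$ is intersection with the Cartier divisor $X$, the point-lifts $C_x$ are built exactly as in Proposition~\ref{prop:rho-is-onto} (a regular system of parameters $(\pi, t_1,\dots,t_d)$ at $x$, the closure of $V(t_1,\dots,t_d)$, and the Henselian property to split off the component through $x$), and surjectivity of $\gamma$ is deduced from commutativity plus the Gabber--Liu--Lorenzini moving lemma. The genuine gap is at the two steps you attribute to ``a moving lemma on $\sX$'' and ``another deformation argument'': the independence of $[C_x]$ from the choice of lift, and the identity $[C]=\sum_i n_i[C_{x_i}]$ in $\CH_1(\sX)_{\Lambda}$. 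These are not moving-lemma statements, and they are \emph{false} with integral coefficients: for $\sX$ an elliptic curve over $S$ with good reduction, two sections through the same point of the special fiber differ by a class in the kernel of the specialization $\Pic^0(\sX_\eta)\to \Pic^0(X)$, which is nonzero in general but is $m$-divisible. They are precisely the rigidity content of the theorem, and your argument never uses the finiteness of $\Lambda$ or the Henselian/excellent hypotheses beyond the splitting of $\wt{Z}_x$. The paper proves exactly this statement (Proposition~\ref{prop:can-lifting}) by induction on the relative dimension: the base case $d=1$ embeds $\CH_1(\sX)_{\Lambda}$ and $\CH^{LW}_0(X)_{\Lambda}$ into $H^2_{\et}(\sX,\Lambda(1))$ and $H^2_{\et}(X,\Lambda(1))$ via the Kummer sequence and invokes the proper base change isomorphism between these two \'etale cohomology groups; the induction step uses the Bertini theorem over the DVR (Proposition~\ref{prop:Bertini-AK}) to find regular hypersurface sections containing $Z$ and $Z_x$ and meeting in a regular relative curve, together with Bloch's normalization trick for non-regular $Z$. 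Without some version of this, neither the well-definedness of $\gamma$ at the cycle level nor the commutativity of the square is established.

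Your point (ii) --- killing the Levine--Weibel relations --- is correctly identified, and your strategy (lift a Cartier curve $Y\subset X$ and the rational function to a horizontal, $A$-flat subscheme of $\sX$ and transfer the divisorial relation) is the paper's (Proposition~\ref{prop:Factor-gamma-*}); but you explicitly defer its execution. The paper carries it out by passing to the modified group $\CH^{BK}_0(X)$ so that non-embedded l.c.i.\ good curves can be handled by push-forward from $\P^M_{\sX}$, and by an inductive modification of a regular sequence generating the ideal of $C$ (following Gabber--Liu--Lorenzini) so that the lift meets $X_{\sing}$ in only finitely many points and $f$ extends to a unit there. In summary: the architecture is right, but the two rigidity steps are mischaracterized and unproven --- and they are where all the hypotheses of the theorem actually enter --- while the relation-killing step is acknowledged but not carried out.
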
  
Here, the vertical maps are the canonical projections, and $\wt{\rho}$ is the group homomorphism given by taking an $1$-cycle in good position and intersecting it with the reduced special fiber $X$. %Note in particular that no assumptions on $k$ are required.  
  
Let us explain how this theorem relates to the construction of \cite{EKW}. Suppose that $\sX$ has semi-stable reduction. Then by \cite[Theorem 5.1]{EKW} there exists a unique surjective homomorphism $\gamma_{EKW}$ making the diagram  
\[\begin{tikzcd}  
\sZ^g_1(\sX)_{\Lambda}  \arrow[r, "\wt{\rho}"]  \arrow[d, twoheadrightarrow]  & \sZ_0(X \setminus X_{\sing})_{\Lambda} \arrow[d, twoheadrightarrow] \\  
\CH_1(\sX)_\Lambda & H^{2d}(X, \Lambda(d)) \arrow[l, "\gamma_{EKW}"]  
\end{tikzcd}  
\]   
commutative. The group at the bottom right corner is the motivic cohomology group with $\Lambda$ coefficients (as in \eqref{eq:restri-EKW-intro}). Combining this with the cycle class map constructed in \cite{BK-1}, we obtain then a commutative diagram of surjections  
\[  
\begin{tikzcd}  
\CH_1(\sX)_\Lambda & \CH^{LW}_0(X)_{\Lambda} \arrow[l,  twoheadrightarrow,  "\gamma"] \arrow[d, twoheadrightarrow, "\cyc_X^{\mathcal{M}}"] \\  
& H^{2d}(X, \Lambda(d))  \arrow[lu,  twoheadrightarrow, "\gamma_{EKW}"]  
\end{tikzcd}  
\]  
so that we can interpret Theorem \ref{thm:Main-1} as a lift to the Levine-Weibel Chow group of the inverse restriction map considered in \cite{EKW}. Note that $\gamma_{EKW}$ exists only in the semi-stable case, while the diagram \eqref{eqn:Main-1-0} with $\gamma$ exists without assumption on the special fiber.   
  
One consequence of \thmref{thm:Main-1} is the following.  
  
\begin{cor}\label{cor:sum**}  
In the notations of \thmref{thm:Main-1}, suppose that the map  
$\wt{\rho} \colon \sZ_1^g(\sX)_{\Lambda} \to  \sZ_0(X\setminus X_{\rm sing})_{\Lambda}$   
descends to a morphism between the Chow groups  
\begin{equation}\label{eq:rho**}  
\rho \colon \CH_1(\sX)_\Lambda \to \CH^{LW}_0(X)_{\Lambda}.  
\end{equation}  
Then $\rho$ is an isomorphism. If moreover $\sX$ has semi-stable reduction, then there is a commutative diagram of isomorphisms  
\[  
\begin{tikzcd}  
\CH_1(\sX)_\Lambda \arrow[r, "\rho"] \arrow[rd] & \CH^{LW}_0(X)_{\Lambda} \arrow[d, twoheadrightarrow, "\cyc_X^{\mathcal{M}}"]\\  
&  H^{2d}(X, \Lambda(d))  
\end{tikzcd}  
\]  
\end{cor}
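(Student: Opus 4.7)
The plan is to deduce from the hypothesis that $\rho$ and the surjection $\gamma$ of \thmref{thm:Main-1} are mutually inverse isomorphisms, by two complementary commutativity chases, and then to read off the semi-stable statement by comparison with \cite{EKW}.

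First, I would unpack the commutativity of \eqref{eqn:Main-1-0} at the level of the canonical projections $\pi_L \colon \sZ^g_1(\sX)_\Lambda \twoheadrightarrow \CH_1(\sX)_\Lambda$ and $\pi_R \colon \sZ_0(X \setminus X_{\sing})_\Lambda \twoheadrightarrow \CH^{LW}_0(X)_\Lambda$: it reads $\gamma \circ \pi_R \circ \wt{\rho} = \pi_L$. The hypothesis that $\wt{\rho}$ descends to $\rho$ is precisely $\rho \circ \pi_L = \pi_R \circ \wt{\rho}$. Combining these two identities,
\[
\gamma \circ \rho \circ \pi_L \;=\; \gamma \circ \pi_R \circ \wt{\rho} \;=\; \pi_L,
\]
and the surjectivity of $\pi_L$ yields $\gamma \circ \rho = \id$ on $\CH_1(\sX)_\Lambda$. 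This already shows that $\rho$ is injective and that $\gamma$ is split surjective, so it suffices to produce the identity $\rho \circ \gamma = \id$.

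For the reverse composition, I would open up the construction of $\gamma$ in the proof of \thmref{thm:Main-1}: the surjectivity of $\gamma$ is obtained there by lifting closed points of $X \setminus X_{\sing}$ to horizontal integral curves in $\sZ^g_1(\sX)$, using the regularity of $\sX$, the perfection of $k$, and the moving technology of \cite{GLL}, and $\gamma$ is then determined by the rule $\gamma([\wt{\rho}(Z)]) = [Z]$ for such a $Z$. Granting this, given $[w] \in \CH^{LW}_0(X)_\Lambda$, pick a representative of the form $w = \wt{\rho}(Z)$; then
\[
\rho \circ \gamma([w]) \;=\; \rho([Z]) \;=\; [\wt{\rho}(Z)] \;=\; [w],
\]
so $\rho \circ \gamma = \id$. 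The main obstacle in the plan is precisely this lifting step: one needs, for each class $[w]$, an actual cycle-level preimage under $\wt{\rho}$. But this is what the construction of $\gamma$ in \thmref{thm:Main-1} supplies, so no additional input is required here.

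For the semi-stable statement, the commutative triangle recalled just after \thmref{thm:Main-1} factors $\gamma$ as $\gamma = \gamma_{EKW} \circ \cyc_X^{\mathcal{M}}$, and by \cite[Theorem~5.1]{EKW} the map $\gamma_{EKW}$ is an isomorphism under the semi-stable hypothesis. Since the first part of the plan forces $\gamma$ (hence $\rho$) to be an isomorphism, the factorization above immediately forces $\cyc_X^{\mathcal{M}}$ to be an isomorphism as well. Pre-composing with $\rho = \gamma^{-1}$ then produces the commutative triangle of isomorphisms displayed in the statement.
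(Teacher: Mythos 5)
Your argument for the first assertion is correct and is essentially the paper's own: $\gamma$ is constructed precisely so that it is inverse to $\wt{\rho}$ on generators, and the two identities $\gamma\circ\rho=\id$ and $\rho\circ\gamma=\id$ follow exactly as you describe (in fact either one of them alone suffices, combined with the surjectivity of $\gamma$, respectively of $\rho$). Note also that your second identity does not really require ``opening up'' the construction of $\gamma$: from $\gamma\circ\pi_R\circ\wt{\rho}=\pi_L$ and the hypothesis $\rho\circ\pi_L=\pi_R\circ\wt{\rho}$ one gets $\rho\circ\gamma\circ(\pi_R\circ\wt{\rho})=\pi_R\circ\wt{\rho}$, and $\pi_R\circ\wt{\rho}$ is surjective by Proposition~\ref{prop:rho-is-onto}, so the lifting step you single out as the main obstacle is already formal.

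The semi-stable part, however, contains a genuine error. You assert that $\gamma_{EKW}$ is an isomorphism ``by \cite[Theorem~5.1]{EKW}''. That theorem only provides the existence and surjectivity of $\gamma_{EKW}$ -- this is exactly how the paper cites it -- while the statement that $\gamma_{EKW}$ (equivalently, the map $\rho$ of \cite{EKW}) is an isomorphism is one of the main theorems of \cite{EKW} and is proved there only under extra hypotheses on $m$ or on the residue field; the present corollary is stated for an arbitrary perfect residue field, where that isomorphism is a \emph{consequence} of the corollary rather than an available input. The deduction should run the other way: since $\gamma=\gamma_{EKW}\circ\cyc_X^{\mathcal{M}}$ and $\gamma$ is injective by the first part, $\cyc_X^{\mathcal{M}}$ is injective; it is already surjective (as recorded in the diagram of surjections following Theorem~\ref{thm:Main-1}), hence an isomorphism, and then $\gamma_{EKW}=\gamma\circ(\cyc_X^{\mathcal{M}})^{-1}$ is an isomorphism as well, which yields the displayed triangle. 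With this substitution your conclusion stands.
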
  
The diagonal arrow in the semi-stable case agrees with the map $\rho$ of \cite[Theorem 1.1]{EKW}. We expect that the homomorphism $\rho$ in \eqref{eq:rho**} always exists, and the reason for such expectation is twofold. On one side, the Levine-Weibel Chow group is expected to be part of a satisfactory theory of cycles on singular varieties, closer to the $K$-theory of vector bundles than the cdh-motivic cohomology. The restriction homomorphism $\rho$ should then be seen as a cycle-theoretic incarnation (in certain bi-degrees) of the restriction map on $K$-groups with $\Lambda$-coefficients  
\[\iota^*\colon K_0(\sX; \Lambda) \to K_0(X; \Lambda) \]  
induced by the inclusion $\iota\colon X\hookrightarrow\sX$.   
   
The relationship between the Levine-Weibel Chow group and the $K_0$ group has been object of investigation by many authors (we recall here \cite{LW}, \cite{Levine-2}, \cite{LevineBloch}, \cite{PW}, \cite{PW-div}, \cite{KrishnaSrinivas}, \cite{Krishna3Fold} to name a few). It is known that the group $\CH_0^{LW}(X)$ can be used to detect invariants of ``additive'' type. For example, if $X$ is an arbitrary reduced curve over a field $k$, we have  
\[ \CH_0^{LW}(X)\xrightarrow{\cong} \Pic(X) \cong H^1(X, \mathbb{G}_m)\]  
generalizing the classical relationship between line bundles and Weil divisors, while   
\[ H^2(X, \Z(1)) \cong H^2(X_{\rm sn}, \Z(1)) \cong \Pic(X_{\rm sn})\]  
where $X_{\rm sn}$ denotes the semi-normalization of $X$. This is reflecting the fact that the functor $X\mapsto \Pic(X)$ considered on $\mathbf{Sch}(k)$ rather than on $\mathbf{Sm}(k)$ is not $\mathbb{A}^1$-invariant, and thus can not be captured by  an $\mathbb{A}^1$-invariant theory like Voevodsky's motivic cohomology.

%A more precise relationship between the Levine-Weibel Chow group and the emerging theory of motives with modulus (with their geometric incarnation given by the Chow groups with modulus as introduced in \cite{BS}, following the insight of Kerz and Saito \cite{KeS}) has been discussed in our work \cite{BK}.  
  
On the other side, however, with torsion coefficients prime to the exponential characteristic of $k$, there are no additive invariants to detect, and the non-$\mathbb{A}^1$-invariant theory ``collapses'' to the classical one. This statement can be made precise in the context of the theory of motives with modulus, as recently developed by Kahn Saito and Yamazaki. See \cite[Corollary 4.2.6 and Remark 4.2.7 b)]{KSY-RecandMotives} (using some results in \cite{BJAlg}). We therefore conjecture that the cycle class map   
\begin{equation}\label{eq:cycMot} \cyc_X^{\mathcal{M}}\colon \CH_0^{LW}(X)_\Lambda \to H^{2d}(X, \Lambda(d))  
\end{equation}  
is always an isomorphism with $\Lambda=\Z/m\Z$-coefficients. In a similar spirit, we expect that $\cyc_X^{\mathcal{M}}$ is an isomorphism with integral coefficients (if $k$ admits resolution of singularities, or with $\Z[1/p]$-coefficients otherwise) if the the singularities of $X$ are sufficiently mild, intuitively where additive phenomena do not occur. This is supported by the following result: if the residue field $k$ is algebraically closed and  
$X \subset \sX$ is a simple normal crossing divisor,   
it is shown in \cite{BK-1} that there is a canonical  
isomorphism   
\begin{equation}\label{eqn:Main-1-1}  
\cyc_X^\mathcal{M} \colon \CH^{LW}_0(X)\otimes_\Z[1/p] \xrightarrow{\cong} H^{2d}(X, \Z[1/p](d)),  
\end{equation}    
which holds integrally if $k$ admits resolution of singularities.   
  
In view of the above discussion, the existence of the map $\rho$ in \eqref{eq:rho**} is therefore coherent with the expectation of \cite{EKW} in the semi-stable reduction case, as explained in \cite[1]{EKW}.  
  
We are in the situation of the Corollary if we put some extra assumption.  
  
\begin{thm}\label{thm:Main-2}Let $\sX$ be as in Theorem \ref{thm:Main-1}, and assume moreover that $A$ has equal characteristic.   Then the map  $\wt{\rho}$   
	in ~\eqref{eqn:Main-1-0} descends to a morphism between the Chow groups in the following cases
	\begin{enumerate}  
%	\item  If $k$ is algebraically closed, then the map $\wt{\rho}$   
%in ~\eqref{eqn:Main-1-0} descends to a morphism between the Chow groups  
%and induces an  isomorphism of finite groups $\rho\colon  \CH_1(\sX)_{\Lambda} \xrightarrow{\cong} \CH^{LW}_0(X)_{\Lambda}$
%\begin{listabc}  
%\item  
%$\rho\colon  \CH_1(\sX)_{\Lambda} \xrightarrow{\cong} \CH^{LW}_0(X)_{\Lambda}$, and  
%\item  
%$cyc^{\et}_{\sX}: \CH_1(\sX)_{\Lambda} \xrightarrow{\cong}   
%H^{2d}_{\et}(\sX, \Lambda(d))$.  
%\end{listabc}  
\item If  $X$ has only isolated singularities and $k$ is finite. %, then the map $\wt{\rho}$   
%in ~\eqref{eqn:Main-1-0} descends to a morphism between the Chow groups if $k$ is finite. % In these cases, it induces an isomorphism of finite groups
%\[\rho\colon  \CH_1(\sX)_{\Lambda} \xrightarrow{\cong} \CH^{LW}_0(X)_{\Lambda}.\]
\item If   $\dim(X)=2$,  (with no further assumptions on the singularities of $X$). %then the map  $\wt{\rho}$   
%in ~\eqref{eqn:Main-1-0} descends to a morphism between the Chow groups (with no assumptions on the singularities of $X$). %, and induces an isomorphism 
%\[\rho\colon  \CH_1(\sX)_{\Lambda} \xrightarrow{\cong} \CH^{LW}_0(X)_{\Lambda}.\]
\end{enumerate}
In both cases, the map $\tilde{\rho}$ induces an isomorphism 
\[\rho\colon  \CH_1(\sX)_{\Lambda} \xrightarrow{\cong} \CH^{LW}_0(X)_{\Lambda},\]
and both groups are finite if $k$ is finite.
\end{thm}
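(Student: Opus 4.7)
The plan is to apply Corollary~\ref{cor:sum**}, which reduces the theorem (apart from the finiteness assertion) to verifying that the map $\wt{\rho}$ of \eqref{eqn:Main-1-0} factors through a homomorphism $\rho\colon \CH_1(\sX)_\Lambda \to \CH^{LW}_0(X)_\Lambda$. Once this descent is established, the corollary supplies both the isomorphism $\rho$ and, in the semi-stable case, its compatibility with the diagonal map of \cite{EKW}. Concretely, given $Z\in \sZ^g_1(\sX)$ representing zero in $\CH_1(\sX)_\Lambda$, one must show that the 0-cycle $\wt{\rho}(Z)\in \sZ_0(X\setminus X_{\sing})_\Lambda$ is Levine--Weibel rationally trivial.

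First, I would unwind the rational equivalence: up to $m$-torsion, $Z = \sum_i \divf(f_i)$ for integral two-dimensional closed subschemes $W_i\subset \sX$ and rational functions $f_i\in k(W_i)^\times$. The equal-characteristic hypothesis $A\cong k[[t]]$ makes it possible to combine the Gabber--Liu--Lorenzini moving lemma \cite{GLL} with a relative Bertini argument to replace each pair $(W_i,f_i)$ by an equivalent pair in which $W_i$ is flat over $A$, its special fiber $C_i = W_i\times_A k$ is a reduced curve on $X$, and $f_i$ is regular and invertible at every point of $C_i\cap X_{\sing}$. Under this good position, restriction to the special fiber yields $\wt{\rho}(Z) = \sum_i \divf(f_i|_{C_i})$ as a 0-cycle supported in $X\setminus X_{\sing}$.

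The next step is to recognize each $C_i$ as a Cartier curve on $X$ in the sense of \cite{LW}. In case (1), since $X_{\sing}$ is a finite set of closed points and $k$ is finite, one has enough degrees of freedom in the moving argument to push $W_i$ away from $X_{\sing}$, so that $C_i\subset X\setminus X_{\sing}$ is trivially Cartier. In case (2), where $\dim X = 2$ with arbitrary singularities, $C_i$ may well pass through $X_{\sing}$, but $W_i$ is a relative Cartier divisor on the \emph{regular} ambient scheme $\sX$; the local defining equation of $W_i$ in $\sX$, together with a local parameter for $X$, then gives a regular sequence cutting out $C_i$ in $X$ at each point of $C_i\cap X_{\sing}$, so $C_i$ is a Cartier curve in the sense of \cite{LW}. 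In both cases, $f_i|_{C_i}$ gives a Levine--Weibel rational equivalence, proving the descent; Corollary~\ref{cor:sum**} then upgrades $\rho$ to an isomorphism.

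For the finiteness claim when $k$ is finite, it suffices via the isomorphism $\rho$ to show that $\CH^{LW}_0(X)_\Lambda$ is finite. Using the surjection of Theorem~\ref{thm:Main-1} together with known finiteness of $\CH_1(\sX)_\Lambda$ from the Saito--Sato étale cycle-class argument applied to a semi-stable alteration of $\sX$ (available in equal characteristic), one can then transport the finiteness back to $\CH^{LW}_0(X)_\Lambda$. The main obstacle in the proof, and the step I expect to be the most delicate, is the verification of the Cartier property of $C_i$ at points of $X_{\sing}$ in case (2): since no restriction is placed on the singularities, one cannot simply move the cycles away from the singular locus and must instead leverage the regularity of $\sX$ and the relative flatness of $W_i$ to transfer a regular sequence from $\sX$ down to $X$ while keeping $f_i$ a unit at those points.
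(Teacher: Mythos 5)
Your proposal takes a genuinely different route from the paper, and the route has a gap at its first and most important step. The paper never manipulates rational equivalences on $\sX$ geometrically. The equal-characteristic hypothesis is used exactly once, and decisively: by Kerz's theorem \cite{Kerz09} the Gersten conjecture for Milnor $K$-theory holds for the equicharacteristic regular scheme $\sX$, yielding the Bloch formula $\CH_1(\sX)_\Lambda \cong H^d(\sX_{\rm Nis}, \sK^M_{d,\sX})$. On the special fiber there is a cycle class map $cyc^M_X\colon \CH^{LW}_0(X)\to H^d(X_{\rm Nis}, \sK^M_{d,X})$, surjective by \cite{KG}, intertwined with the model via the restriction map $\rho^M$ on Milnor $K$-cohomology as in \eqref{eq:restrictionMilnor}. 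The descent of $\wt{\rho}$ is then a formal diagram chase once $cyc^M_X$ is an isomorphism (the Bloch--Quillen formula, \conjref{conj:Bloch-Quillen}), and hypotheses (1) and (2) of the theorem are exactly the cases of \thmref{thm:BQ-true} where this is known (\cite{KrishnaCFT} for isolated singularities over finite fields; \cite{LevineBloch}, \cite{BKS} for arbitrary surfaces). \corref{cor:sum**} then upgrades $\rho$ to an isomorphism, and finiteness is quoted directly from \cite{KrishnaCFT}.

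The gap in your approach is the claimed reduction to pairs $(W_i, f_i)$ in good position. Knowing that $Z\in\sZ_1^g(\sX)$ dies in $\CH_1(\sX)_\Lambda$ only gives $mZ'+Z=\sum_i\divf(f_i)$ for \emph{some} integral two-dimensional $W_i\subset\sX$ and $f_i\in k(W_i)^\times$, with no control on the $W_i$: they may be vertical, their special fibers may be non-reduced or meet $X_{\rm sing}$ badly, and $\divf(f_i)$ may have vertical components, so that naive restriction to $X$ is not even defined. The moving lemma of \cite{GLL} moves \emph{cycles} into good position (that is how $\sZ_1^g(\sX)\surj\CH_1(\sX)$ is proved); it says nothing about moving the \emph{relations} $(W_i,f_i)$, and a moving lemma for rational equivalences of the strength you need --- simultaneously making $W_i$ flat with reduced Cartier special fiber and $f_i$ invertible along $C_i\cap X_{\rm sing}$ while preserving the restricted $0$-cycle --- is not available; it is precisely the hard geometric problem the Milnor $K$-theory detour is designed to bypass. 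Note also that your stated use of equal characteristic ($A\cong k[[t]]$, which anyway fails for non-complete $A$) plays no real role in such a moving argument, whereas in the paper the hypothesis is indispensable for Kerz's theorem; and your finiteness argument is shaky, since the surjection $\gamma$ of \thmref{thm:Main-1} points the wrong way for transporting finiteness, and finiteness of $\CH_1(\sX)_\Lambda$ via alterations runs into degree issues with $\Z/m$-coefficients --- the paper instead quotes finiteness of $\CH^{LW}_0(X)_\Lambda$ over finite fields from class field theory for singular schemes.
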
  
  
%\thmref{thm:Main-2}(1) %can be viewed as the most general form of the  
%restriction isomorphisms of Saito-Sato and Esnault-Kerz-Wittenberg  
%for algebraically closed residue fields. This 
%can also be viewed as  
%a rigidity theorem for the Chow groups of relative 0-cycles. It
%shows in particular that the group of $1$-cycles on $\sX$, with finite coefficients, has a particularly simple shape when $A$ is strictly Henselian, independently of how bad the special fiber is. 
%From a), it is also easy to deduce that the cycle class map \eqref{eq:cycMot} is an isomorphism in this case.  
  
If $A$ has equal characteristic, then the Gersten conjecture for Milnor $K$-theory holds, thanks to \cite{Kerz09}, and the existence of the map $\rho$ can be deduced from the validity of the Bloch-Quillen formula for singular varieties. See Section 5.3 for details (and for a comment about the assumption on the singularities of $X$ in the case of relative dimension 2).   
  
\medskip

%As recalled before, the part (b) of \thmref{thm:Main-2}(1)  
%was proven (for $k$ algebraically closed or finite) by Saito and   
%Sato \cite[Theorem~0.6]{SS} under the assumption that  
%$X$ is a simple normal crossing divisor (in particular, all its  
%irreducible components are regular) on $\sX$. The assumption of  
%$X$ being a simple normal crossing divisor was later removed by  
%Bloch \cite[Theorem~A.1]{EW} using the technique of  
%alterations due to Gabber and de Jong.
% Thanks to the usage of the  
%Levine-Weibel Chow group, the part (b) of \thmref{thm:Main-2}(1)   
%provides a direct proof of the results of Saito-Sato and Bloch for  
%algebraically closed residue fields. 
\begin{remk}\label{rem:algclosed} If the residue field $k$ is algebraically closed, the cycle class map to \'etale cohomology $\cyc_X^{\et}\colon \CH_0^{LW}(X)_\Lambda\to H^{2d}_{\et}(X, \Lambda(d))$ is an isomorphism (see \ref{lem:et-iso}). This gives in particular that the map $\tilde{\rho}$  of \eqref{eqn:Main-1-0} descends to a morphism between the Chow groups $\rho\colon \CH_1(\mathcal{X})_{\Lambda}\xrightarrow{\simeq} \CH_0^{LW}(X)_{\Lambda}$, and so $\CH_1(\mathcal{X})_\Lambda\xrightarrow{\simeq} H^{2d}_{\et}(\mathcal{X}, \Lambda(d))$ by proper base change. Note however that this isomorphism between $\CH_1(\mathcal{X})_\Lambda$ and the \'etale cohomology group  was already obtained by Bloch \cite[Theorem~A.1]{EW}, and we do not get more information in the algebraically closed field case.
\end{remk}

\begin{comment}  
If the residue field $k$ is algebraically closed and  
$X \subset \sX$ is a simple normal crossing divisor,   
it is shown in \cite{BK-1} that there is a canonical  
isomorphism   
\begin{equation}\label{eqn:Main-1-1}  
\CH^{LW}_0(X)_{\Lambda} \xrightarrow{\cong} H^{2d}(X, \Lambda(d)),  
\end{equation}  
where $\Lambda = \Z$ if $k$ admits resolution of singularities   
and $\Z[\tfrac{1}{p}]$ otherwise.  
More generally, it is shown in \cite{BK-1} that the isomorphism \eqref{eqn:Main-1-1}  
holds without any condition on $X$ if we take $\Lambda = {\Z}/m$ with  
$m$ prime to the residue characteristic.   
\thmref{thm:Main-2} therefore shows that    
the restriction isomorphism, as formulated by  
Esnault, Kerz and Wittenberg \cite{EKW} for a semi-stable reduction,  
actually holds for arbitrary reduction (when $k$ is algebraically closed).   
We expect part (1) of \thmref{thm:Main-2} to hold when $k$ is finite, but we  
do not have sufficient information about the Levine-Weibel Chow group  
at present to handle this case.  
\end{comment}

We end the introduction with a brief outline of this text.  
The proofs of our main theorems are inspired by the ideas of Esnault, Kerz  
and Wittenberg \cite{EKW}. The new insight is the introduction  
of the Levine-Weibel Chow group and its modified version from \cite{BK}  
in the picture and to show how this  
leads to the above generalizations, using the moving lemmas of  
Gabber, Liu and Lorenzini \cite{GLL}, some ideas from the Bertini  
theorems of Jannsen and Saito \cite{SS} and  a construction of  
 cycle class maps to {\'e}tale cohomology and to the Nisnevich cohomology of Milnor $K$-sheaves. These   
cycle class maps play an important role in the calculation of $\CH^{LW}_0(X)$  
with torsion coefficients.

In \S~\ref{sec:Bertini}, we discuss some forms of Bertini theorems over  
a base and in \S~\ref{sec:Lifting}, we prove our result for relative curves.  
We finish the proof of \thmref{thm:Main-1} in \S~\ref{sec:Prf-1}.  
In \S~\ref{sec:Res-iso}, we construct the cycle class maps for the   
Levine-Weibel Chow group and prove \thmref{thm:Main-2}.

\section{Bertini type theorems over a base}\label{sec:Bertini}  
In this section, we discuss some of the technical lemmas which we need in order  
to prove \thmref{thm:Main-1} when $\dim(\sX)$ is at least two. As  
some of these results are of independent interest and also used elsewhere,  
we state them separately. We fix the following general framework.  
  
\subsection{Setting}\label{sec:general-setting}  
Let $S$ be the spectrum of a discrete valuation ring $A$ with field of   
fractions $K$. Let $\eta$ be the generic point of $S$ and $s$ its closed   
point. Write $k$ for the residue field of $A$,  which is assumed to be perfect. We let $\fM = (\pi)$ denote the maximal ideal of $A$.  
Throughout this text, we fix a regular scheme   
$\sX$ which is flat and projective over $S$. We let $\phi:\sX \to S$  
be the structure morphism and let $d \ge 0$ denote the relative dimension  
of $\sX$ over $S$.  
Write $\sX_s = \sX \times_A k:= \sX \times_S \Spec(k)$   
for the special fiber of $\phi$   
and $X = (\sX_s)_{\red}\hookrightarrow \sX_s$ for the   
reduced special fiber. Given any scheme $Y$, we write   
$Y_{\rm sing}\subsetneq Y$ for the singular locus of $Y_{\rm red}$.   
In this section, we shall assume $k$ to be infinite.  
  
\begin{defn}\label{defn:hplane}    
A hyperplane $H\subset \P^N_S$ of the projective space $\P^N_S$ over $S$ is a   
closed subscheme of $\P^N_S$ corresponding to an $S$-rational point of the   
dual $(\P^N_S)^{\vee} := {\rm Gr}_S(N-1,N)$.   
\end{defn}  
  
By definition, an $S$-point of ${\rm Gr}_S(N-1,N)$ corresponds to   
(an isomorphism class of) a surjection $q\colon \sO_S^{\oplus N+1}\to \sQ$,   
where $\sQ$ is locally free (hence free, since $S$ is the spectrum of a DVR)   
of rank $N$.  
Fixing a basis $\{e_0,\ldots, e_N\}$ of $\sO_S^{\oplus N+1}$, we can write the   
kernel of $q$ as $\sum_{i=0}^N \langle a_i\rangle e_i \subset \sO_S^{N+1}$ for   
elements $a_i\in A$, not all in $\fM$. Here,  
$\langle a \rangle$ is the submodule of $\sO_S$ generated by $a\in A$.   
If $X_0,\ldots, X_n$ are the homogeneous coordinate functions on $\P^N_S$, then  
the hyperplane $H$ corresponding to $q$ is the zero locus of the linear   
polynomial $q(x) = \sum_{i=0}^{N}a_i X_i$.   
  
The same equation defines the hyperplane $H_\eta\subset \P^N_K$, the generic   
fiber of $H$. We denote by $H_s$ the hyperplane in $\P^N_k$ defined by the   
reduction of $q(x)\mod \pi$. In order to   
show the existence of good hyperplanes of $\P^N_S$, we will frequently use the   
following simple but crucial remark, due to Jannsen and Saito.  
  
\begin{lem}$($\cite[Theorem~0.1]{JannsenSaito}$)$\label{lem:Jannsen-Saito}  
Let $P$ be a projective $S$-scheme and let ${\rm sp}\colon P(K)\to P(k)$ be   
the specialization map, given by $x \mapsto \ov{\{x\}} \cap P_s$.  
Let $V_1\subset P_\eta$ and $V_2\subset P_s$ be two   
open dense subsets of $P_\eta$ and $P_s$, respectively.   
Assume that ${\rm sp}$ is surjective, $P$ has irreducible fibers and  
$P_s$ is a rational variety over $k$.   
Then the set  
\[ U := V_1(K) \cap {\rm sp}^{-1}(V_2(k))  
\]  
is non-empty.  
\end{lem}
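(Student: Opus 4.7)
The plan is to reduce the problem to producing a suitably placed $k$-point of $P_s$. I would start by recalling the geometric description of the specialization map via the valuative criterion of properness: for $x \in P(K)$, there is a unique $S$-section $\wt{x} : S \to P$ extending $x$, and ${\rm sp}(x) = \wt{x}(s)$. Equivalently, writing $x$ also for the corresponding (not necessarily closed) point of $P$, one has ${\rm sp}(x) = \ov{\{x\}} \cap P_s$, where the closure is taken in $P$.

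Next, I would introduce the closed complement $Z := P_\eta \setminus V_1$ and its scheme-theoretic closure $\ov{Z} \subset P$; if $Z = \emptyset$ the conclusion is immediate from the surjectivity of ${\rm sp}$, so assume otherwise. Decomposing $Z$ into irreducible components $Z = \bigcup_i W_i$, each satisfies $\dim W_i < \dim P_\eta$ by density of $V_1$, and each $\ov{W_i} \subset P$ is integral and dominates $S$. Because $A$ is a DVR, $\ov{W_i}$ is automatically flat over $S$, and so $\dim (\ov{W_i})_s = \dim (\ov{W_i})_\eta = \dim W_i$. Upper semi-continuity of fiber dimension for the proper morphism $P \to S$ gives $\dim P_\eta \leq \dim P_s$, so every component of $T := \ov{Z} \cap P_s = \bigcup_i (\ov{W_i})_s$ has dimension strictly less than $\dim P_s$. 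Since $P_s$ is irreducible, $T \subsetneq P_s$.

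Finally I would set $V_2' := V_2 \setminus T$, an open dense subset of $P_s$. Because $P_s$ is rational over $k$ and $k$ is infinite by the standing assumption of the section, $k$-rational points are Zariski-dense in $P_s$, so $V_2'(k)$ is non-empty. Pick $y \in V_2'(k)$ and, using the surjectivity hypothesis on ${\rm sp}$, lift $y$ to some $x \in P(K)$ with ${\rm sp}(x) = y$. The crucial point is that $x \in Z$ would force $\ov{\{x\}} \subset \ov{Z}$ and hence $y = {\rm sp}(x) \in T$, contradicting $y \in V_2'$; therefore $x \in V_1(K)$, and as $y \in V_2(k)$ too, $x \in U$. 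The only substantive input beyond basic scheme theory is the constancy of fiber dimension for an integral scheme flat over a DVR, combined with the upper semi-continuity of fiber dimension on a proper morphism, so I do not foresee any real obstacle.
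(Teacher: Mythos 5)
Your proof is correct and follows essentially the same route as the paper's: both arguments reduce to producing a $k$-point of $V_2 \setminus (\ov{Z}\cap P_s)$ by combining the surjectivity of ${\rm sp}$ with the density of $k$-rational points on the rational variety $P_s$ over the infinite field $k$. The only difference is that you spell out, via flatness of the $\ov{W_i}$ over the DVR and semi-continuity of fiber dimension, why $\ov{Z}\cap P_s$ is a \emph{proper} closed subset of $P_s$, a point the paper asserts without the dimension count.
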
  
\begin{proof}  
This is extracted from the middle of the proof of   
\cite[Theorem 0.1]{JannsenSaito}.  
Before we give the proof, we note that if $x \in P(K)$, then  
the map $\ov{\{x\}} \to S$ must be an isomorphism and hence   
$\ov{\{x\}} \cap P_s$ is a unique closed point. In particular, the map  
${\rm sp}\colon P(K)\to P(k)$ is well-defined.  
  
Let $Z_1 = P_\eta\setminus V_1$ and $Z_2 = P_s\setminus V_2$ be the (reduced)   
closed complements of $V_1$ and $V_2$, respectively.   
Write $\ol{Z_1}$ for the closure of $Z_1$ in $P$. One clearly has that   
$Z_1(K) \subset {\rm sp}^{-1}((\ol{Z_1}\cap P_s)(k))$, so that the interesting   
set  $U$ contains ${\rm sp}^{-1}((V_2 \setminus (\ol{Z_1}\cap P_s) )(k))$.  
Since ${\rm sp}$ is surjective by assumption, it's enough to observe that   
$(V_2 \setminus (\ol{Z_1}\cap P_s) )(k)$ is non-empty.  
Now, we are given that $V_2$ is a dense open subset and  
$(\ol{Z_1}\cap P_s)$ is a proper closed subset of the irreducible scheme $P_s$.  
It follows that $V_2 \setminus (\ol{Z_1}\cap P_s)$ is open   
and dense in $P_s$. Since $k$ is infinite and $P_s$ is rational over $k$,  
one knows that $V_2 \setminus (\ol{Z_1}\cap P_s)(k)$   
is dense in $P_s$. This finishes the proof.  
\end{proof}

If we take $P = (\P^N_S)^{\vee}$, the three conditions of the Lemma are   
satisfied. Since any hyperplane $H\subset \P_N^S$ is completely determined by   
its generic fiber $H_\eta$ (as $(\P_S^N)^{\vee}(S) = (\P_K^N)^{\vee}(K)$),   
we see that the `good' hyperplanes over $S$ are parameterized by subsets of   
the form $V(K)\cap {\rm sp}^{-1}(U(k))$, for good open subsets $V$ of   
$(\P^N_K)^{\vee}$ and $U$ of $(\P^N_k)^{\vee}$, representing the prescribed   
behavior of the generic fiber and of the special fiber of $H$.  
We call a hyperplane $H$ corresponding to a $K$-rational point of a set of the   
form $V(K)\cap {\rm sp}^{-1}(U(k))$ \textit{general}.   
Our first application is the following proposition.

\begin{prop}\label{prop:Bertini-regularity}   
Let $\sX\subset \P^N_S$ be as in  \ref{sec:general-setting} such that   
$d \ge 2$. Then a general hyperplane $H\subset \P^N_S$ intersects $\sX$   
transversely, i.e., the fiber product $\sX\times_{\P^N_S} H$ is   
regular and flat $S$-scheme. If the generic fiber $\sX_\eta$ of $\sX$ is   
smooth over $K$, then $H_\eta$ is smooth as well.  
\end{prop}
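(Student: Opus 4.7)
The plan is to invoke \lemref{lem:Jannsen-Saito} with $P = (\P^N_S)^\vee$, which has irreducible fibers, surjective specialization map, and rational special fiber over $k$. It will then suffice to exhibit dense opens $V_1 \subset (\P^N_K)^\vee$ and $V_2 \subset (\P^N_k)^\vee$ such that any hyperplane $H$ with $H_\eta \in V_1$ and $H_s \in V_2$ gives $\sX \cap H$ regular and flat over $S$. The problem splits into three independent conditions: flatness over $S$, regularity at generic-fiber points, and regularity at special-fiber points.

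Flatness is straightforward: $\sX \cap H \to S$ is flat iff no irreducible component of $\sX_s$ is contained in $H_s$, which is a dense open condition $V_2^{\rm flat}$ on $H_s$ (finitely many components to avoid). Regularity at generic-fiber points follows from classical Bertini for regularity, applied to the regular scheme $\sX_\eta$ over the infinite field $K$ (note $k$ infinite forces $K$ infinite, since $A$ surjects onto $k$), giving a dense open $V_1 \subset (\P^N_K)^\vee$; if $\sX_\eta$ is smooth over $K$, Bertini for smoothness additionally arranges $\sX_\eta \cap H_\eta$ smooth.

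The crux is regularity at a closed point $x \in \sX_s \cap H$. Since $\sX$ is regular and $\sX \cap H$ is cut out by the single equation $f = \sum a_i X_i$, this amounts to $f \notin \fm_{\sX,x}^2$. The key observation I would exploit is that the reduction-mod-$\pi$ map
\[\fm_{\sX,x}/\fm_{\sX,x}^2 \twoheadrightarrow \fm_{\sX_s,x}/\fm_{\sX_s,x}^2\]
sends $f \mapsto \bar f$, so $\bar f \notin \fm_{\sX_s,x}^2$ already suffices for $f \notin \fm_{\sX,x}^2$, regardless of whether $\sX \to S$ is smooth at $x$ (i.e.\ regardless of whether $\pi \in \fm_{\sX,x}^2$). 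This reduces the task to a purely $k$-geometric Bertini problem on $\sX_s$: find a dense open $V_2^{\rm reg} \subset (\P^N_k)^\vee$ over which $\bar f \notin \fm_{\sX_s,x}^2$ at every closed $x \in \sX_s \cap H_s$.

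An incidence-variety count handles this last step. Let
\[B = \{(x, H_s) \in \sX_s \times_k (\P^N_k)^\vee : \bar f \in \fm_{\sX_s,x}^2\}.\]
Over each closed point $x$, the fiber is a linear subspace of $(\P^N_k)^\vee$ of codimension $1 + \dim T_x \sX_s \geq 1 + d$, using flatness of $\sX \to S$ to conclude that $\sX_s$ is equidimensional of dimension $d$. Consequently $\dim B \leq d + (N - d - 1) = N - 1 < N$, and its image $V_2^{\rm bad}$ in $(\P^N_k)^\vee$ is a proper closed subset. Taking $V_2 = V_2^{\rm flat} \cap ((\P^N_k)^\vee \setminus V_2^{\rm bad})$ and applying \lemref{lem:Jannsen-Saito} produces the desired general $H$. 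I expect the main technical delicacy to be the uniform treatment via the reduction-mod-$\pi$ identity of the smooth and non-smooth loci of $\sX \to S$, together with the possibility that $\sX_s$ is non-reduced at singular points (where $\dim T_x \sX_s$ strictly exceeds $d$), which fortunately only improves the dimension count.
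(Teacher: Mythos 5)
Your overall strategy --- the Jannsen--Saito specialization lemma applied to $(\P^N_S)^\vee$, an incidence-variety dimension count on the special fiber, and the key observation that non-vanishing of $\bar f$ in $\fm_{\sX_s,x}/\fm_{\sX_s,x}^2$ forces non-vanishing of $f$ in $\fm_{\sX,x}/\fm_{\sX,x}^2$ --- is exactly the paper's. But there are two gaps. The first is the appeal to ``classical Bertini for regularity'' over $K$ to handle the generic fiber. No such classical theorem is available when $K$ is imperfect (e.g.\ $A$ equicharacteristic $p$, $K=k((\pi))$): there $\sX_\eta$ regular does not imply $\sX_\eta$ smooth, and the tangent-space count underlying Bertini breaks down at closed points with inseparable residue field. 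The Bertini theorem the paper invokes from Jouanolou is a smoothness statement, used only for the final assertion under the extra hypothesis that $\sX_\eta$ is smooth. Fortunately your step is also unnecessary: $\sX\times_{\P^N_S}H\to S$ is proper, so all of its closed points lie over the closed point of $S$, and since the local ring at any point is a localization of the local ring at a closed point in its closure (and localizations of regular local rings are regular), regularity at the special-fiber closed points already gives regularity everywhere, generic fiber included. This is how the paper argues; your $V_1$ should be dropped from the regularity argument and retained only to get smoothness of $\sX_\eta\cdot H_\eta$ when $\sX_\eta$ is smooth.

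The second gap is inside the incidence count: the claim that the fiber of $B$ over a closed point $x$ is a linear subspace of codimension $1+\dim T_x\sX_s$ is false when $[k(x):k]>1$, because the evaluation map $V=H^0(\P^N_k,\sO(1))\to\sO_{\sX_s,x}/\fm_{\sX_s,x}^2$ is a $k$-linear map into a $k(x)$-vector space and need not be surjective (already for a degree-two point of $\P^1_k$ the target has $k$-dimension $4$ while $\dim_k V=2$). This is precisely where perfectness of $k$ enters the paper's proof: one first establishes the bound $\dim W\le N-1$ over $\ov{k}$, where all closed points are rational, and then descends to $k$, using that the condition $h\in\fm_{X,x}\setminus\fm_{X,x}^2$ can be detected after the (unramified) base change $X_{\ov{k}}\to X$ and that the $k$-rational points are dense in the resulting open set of $(\P^N_{\ov{k}})^\vee$. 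Your write-up needs this reduction to $\ov{k}$, or some substitute for it, to be complete.
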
  
\begin{proof}  
We first note that since $\sX\to S$ is flat and both $\sX$ and $S$ are   
regular, it follows that $X = (\sX_s)_{\rm red}$ is equi-dimensional of   
dimension $d$.  
We begin by claiming that there exists an open subset $U$ of   
$(\P^N_k)^\vee$ with the dense subset $U(k)$ of $k$-rational points  
such that the following hold. Let $H$ be the hyperplane of   
$\P^N_k$ lying in $U(k)$. Then $H$ does not   
contain any component of $X$, and if $h$ denotes the image in $\sO_{X,x}$ of a   
local equation for $H$ at a closed point $x\in X$, either $h$ is a unit or   
$h \in \mathfrak{m}_{X,x}\setminus \mathfrak{m}_{X, x}^2$.  
  
It is clear that there exists a dense open subset $U'$ of   
$(\P^N_k)^\vee$ such that no hyperplane corresponding to a $k$-rational point   
of $U'$ contains any irreducible component of $X$. So we only need  
to find an open subset $U$ of   
$(\P^N_k)^\vee$ with the   
dense subset $U(k)$ such that if $H$ is the hyperplane of   
$\P^N_k$ corresponding to a point of $U(k)$ and if   
$h$ denotes the image in $\sO_{X,x}$ of a   
local equation for $H$ at a closed point $x\in X$, either $h$ is a unit or   
$h \in \mathfrak{m}_{X,x}\setminus \mathfrak{m}_{X, x}^2$.  
  
To prove this latter claim, we first assume that $k={\overline{k}}$  is separably (hence algebraically,  
since $k$ is perfect) closed. Let $W$ be the incidence   
variety $W\subset X \times (\P^N_k)^{\vee}$ consisting of points $(x, H)$ such   
that either $H$ contains a component of $X$ or $H$ does not contain any   
component of $X$ but for any local equation $h$ of $H$ at $x$,   
one has $h\in \mathfrak{m}_{X,x}^2 \subset \sO_{X, x}$. We need to estimate the   
dimension of $W$.

Let $V=H^0(\P^N_k, \cO_{\P^N_k}(1))$ be the $(N+1)$-dimensional $k$-vector space   
of linear forms, with basis $\{X_0, X_1, \ldots, X_n\}$.   
Let $x\in X$ be a closed   
point. Up to a change of coordinates, we can assume that the  hyperplane cut   
out by $X_0$ does not pass through $x$.  
We then get an isomorphism   
$V\xrightarrow{\simeq} \sO_{\P^N_k, x}/ \mathfrak{m}_{\P^N_k,x}^2$, sending   
$X_0$ to $1$. By composition, we have a surjection  
\[  
\phi_x\colon V \surj \sO_{X,x}/{\mathfrak{m}_{X,x}^2}  
\]  
and the kernel of $\phi_x$ is the $k$-vector space $V_x =   
\{H\in (\P^N_k)^\vee(k)\,|\, x\in H \ \mbox{and} \ h\in \mathfrak{m}_{X,x}^2\}$.  
Moreover, $V_x$ consists precisely of the hyperplanes which are bad   
at $x$. Notice now that we have an exact sequence of $k$-vector spaces  
\[  
0\to \mathfrak{m}_{X,x}/\mathfrak{m}_{X,x}^2\to \sO_{X,x}/\mathfrak{m}_{X,x}^2   
\to \sO_{X,x}/\mathfrak{m}_{X,x} = k \to 0.    
\]   
In particular, we get $\dim_k (\sO_{X,x}/\mathfrak{m}_{X,x}^2) \geq   
1+\dim(\sO_{X,x}) = 1+d$. Thus $\dim_k(V_x)\leq (N+1) - (d+1) = N-d$.  
  
If $W_x$ denotes the fiber at $x$ of $W$ along the first projection   
$p_1\colon W\to X\times (\P^N_k)^\vee\to X$, then we have   
$W_x = \mathbb{P}(V_x)$ and this implies from the previous estimate  
that $\dim_k(W_x) \le N-d-1$.  
Since the  projection $p_1$ is surjective, $X$ is equi-dimensional of   
dimension $d$, and for each $x\in X$, the fiber $W_x$ is a projective space of   
dimension at most $N-d-1$, we deduce that $W$ has dimension at most   
$(N-d-1) +d = N-1$.  
Since $X$ is proper over $k$, the second projection map   
$p_2\colon W\to (\P^N_k)^\vee$ is closed, hence the image is a proper closed   
subset of dimension at most $N-1$.   
We conclude that $U :=(\P^N_k)^\vee \setminus p_2(W)$ is open and dense in   
$(\P^N_k)^\vee$.   
  
Suppose now that $k$ is an arbitrary infinite perfect field  
and let $\ov{k}$ be an algebraic closure of $k$. Let $X_{\ov{k}}$ denote the  
base change of $X$ to $\ov{k}$ and let $U \subset (\P^N_{\ov{k}})^\vee$  
be the dense open subset of good hyperplanes over $\ov{k}$ obtained as  
above. Since $k$ is infinite and $(\P^N_{k})^\vee$ is rational,   
we know that the set of closed points in $(\P^N_{\ov{k}})^\vee$     
which are defined over $k$ is dense in $U$.   
Let $H \in U(k)$ be any such point. Let $x \in X$ be any  
closed point and let $h$ denote the local equation of $H$ in $\sO_{X,x}$.  
Suppose that $h$ is not a unit in $\sO_{X,x}$ so that   
$h \in \fm_{X,x}$.  
  
We know that $\pi^{-1}(x)$ is a finite set   
of closed points $\{x_1, \ldots , x_r\}$, where $\pi: X_{\ov{k}} \to X$ is the  
projection. Moreover, $H_{\ov{k}}$ has the property  
that its local equation $h$ lies in  
${\fm_{X_{\ov{k}},x_i}} \setminus {\fm^2_{X_{\ov{k}},x_i}}$ for each $i$.  
It follows that $h$ must lie in $\fm_{X,x} \setminus \fm_{X,x}^2$.  
In other words, there is an open subset $U \subset (\P^N_{k})^\vee$  
with the dense subset $U(k)$ such that every member of $U(k)$ satisfies the  
desired property. This proves the claim.

%Now, if $k$ is an arbitrary infinite perfect field, not necessarily algebraically closed, we note that the set of $k$-rational points of $(\P^N_k)^\vee$ will still have a dense intersection with $U$. In particular, $U(k)$ is (pointwise) dense in $(\P^N_k)^\vee$, as required.  
  
We now let ${\rm sp}\colon (\P^N_K)^{\vee}(K)\to (\P^N_k)^\vee(k)$ be the   
specialization map, and let $H$ be any hyperplane corresponding to a   
$K$-rational point of ${\rm sp}^{-1}(U(k))$ (note that this set is non-empty).   
Since this is a point in a projective space, say of coordinates   
$(a_0:a_1:\ldots: a_N)$, we can assume that not all the $a_i$'s are divisible   
by $\pi$. In particular, $H$ is not vertical, i.e., it is not contained in   
the special fiber $\P^N_k$. Hence it is automatically flat over $S$.  
  
Let $x\in X$ be a closed point and let $h$ be the image in $\sO_{\sX,x}$ of a   
local equation defining $\sX\cdot H = \sX\times_S H$ in a neighborhood of $x$.  
If $h$ is a unit in $\sO_{\sX,x}$, then $x\notin \sX\cdot H$ and there is   
nothing to say. Assume then that $h\in \mathfrak{m}_{\sX,x}\subset \sO_{\sX,x}$  
and write $\ov{h}$ for the image of $h$ in $\sO_{X,x}$.   
By construction, $\ov{h}$ is a local equation for $X\cdot H_s$  
and hence $\ov{h}\in \mathfrak{m}_{X,x}\setminus \mathfrak{m}^2_{X,x}$ by our  
choice of $U$. But this forces $h \in \mathfrak{m}_{\sX,x}\setminus   
\mathfrak{m}^2_{\sX,x}$ as well.  
Since $\sO_{\sX, x}$ is regular by assumption,  
this implies that $\sO_{\sX, x}/(h) = \sO_{\sX\cdot H, x}$ is a regular local   
ring. We have thus shown that every closed point    
$x\in (\sX\cdot H)_s$ has an open neighborhood in   
$\sX\cdot H$ where $\sX\cdot H$ is regular.   
Since $\sX$ is proper over $S$, these neighborhoods form a cover of   
$\sX\cdot H$, proving that $\sX\cdot H$ is regular, as required.   
  
%Since $d \ge 2$ and $X$  has the same number of connected components of $\sX$,   
%a general member of $U(k)$ has the property that  
%its intersection with each connected component of $X$ is connected. In particular,  
%${\rm sp}^{-1}(U(k))$ will have the property that the intersection of its  
%general member with $\sX$ is connected if $\sX$ happens to be connected from the beginning.  
  
For the last assertion, suppose that $\sX_\eta$ is smooth over $K$.   
In this case, the classical theorem of Bertini   
(see, for example, \cite[6.11]{Jou})   
asserts that there exists a dense Zariski open set $V\subset (\P^N_K)^\vee$   
parametrizing hyperplanes $H_\eta$ of $\P^N_K$ such that the intersection   
$\sX_\eta\cdot H_\eta$ is smooth.    
It is then enough to take $H\in V(K)\cap {\rm sp}^{-1}(U(k))$, which is   
non-empty by \lemref{lem:Jannsen-Saito}, to get a general hyperplane of   
$\P^N_S$ which satisfies all the required conditions.    
\end{proof}

\begin{remk}\label{remk:Bertini-rem}  
The proof of Proposition~\ref{prop:Bertini-regularity} gives in fact a   
bit more. In the setting of this proposition, we can consider   
the following situation. Let $(P)_s$ be any property which is generically   
satisfied by a hyperplane section of $X$ in $\P^N_k$.   
An example of such property could be `being Cohen-Macaulay'   
if $X$ is Cohen-Macaulay, or `being irreducible' if $X$ is irreducible  
(see \cite{Jou}).  
Here, generically means that the property is satisfied by each hyperplane in a   
open dense subset $V_P$ of $(\P^N_k)^\vee$.   
The set $U\cap V_P$ for the open set $U$ constructed above is then open and   
dense in $(\P^N_k)^\vee$. Thus, any hyperplane $H$ of $\P^N_S$ which   
corresponds to a $K$-rational point of ${\rm sp}^{-1}((U\cap V_P)(k))$ will   
intersect $\sX$ transversely, and its special fiber will moreover satisfy   
the property $(P)$.  
\end{remk}

We will now show that, under some extra conditions, there is a weak version of   
the Theorem of Altman and Kleiman \cite{AK} on hypersurface sections   
containing a subscheme. The proof of this fact uses a combination of  
ideas from Bloch's appendix to \cite{EW} and from \cite[Theorem~4.2]{SS}.

\begin{prop}\label{prop:Bertini-AK}   
Let $\sX\subset \P^N_S$ be as in \ref{sec:general-setting} such that  
$d \ge 2$. Let   
$Z\subset \sX$ be a regular, integral, flat relative $0$-cycle over $S$.  
Let $\sO_\sX(1)$ be the restriction of the line bundle $\sO_{\P^N_S}(1)$   
to $\sX$, and let $\sI\subset \sO_{\P^N_S}$ be the ideal sheaf of   
$Z$ in $\P^N_S$. Assume that   
%$Z\cap X_{\rm sing} = \emptyset$ and   
$Z\cap X$ is supported on one closed point $x \in X$.  
Then, for all integers $n \gg 0$ and a general section   
$\sigma \in H^0(\P^N_S, \sI(n))$,  
the hypersurface $H = (\sigma)$ defined by $\sigma$ has the  
following properties.  
\begin{enumerate}  
\item  
$\sX\cdot H$ is regular, flat and projective over $S$.  
\item  
$H\supset Z$.  
\end{enumerate}  
%If $\sX$ is connected, $H$ can be chosen so that $\sX\cdot H$ is  connected as well.  
\end{prop}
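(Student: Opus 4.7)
My plan is to adapt the argument of \propref{prop:Bertini-regularity} from linear forms to degree-$n$ hypersurfaces of $\P^N_S$ passing through $Z$, replacing the elementary linear-algebra count by a count on sections of $\sI(n)$ based on Serre vanishing. First fix $n$ large enough that $\sI(n)$ is Castelnuovo--Mumford regular on $\P^N_S$. Two things follow: (a) $R^i\phi_*\sI(n)=0$ for all $i>0$, where $\phi\colon \P^N_S\to S$, so by cohomology and base change and the flatness of $Z/S$ one obtains $H^0(\P^N_S,\sI(n))\otimes_A k = H^0(\P^N_k,\sI_s(n))$, making $P := \P_S(H^0(\P^N_S,\sI(n))^\vee)$ a projective $S$-scheme with irreducible, rational fibers $|\sI_\eta(n)|$ and $|\sI_s(n)|$; (b) for every closed point $y\in X$ the second-jet map $H^0(\P^N_k,\sI_s(n))\to \sI_s(n)\otimes\sO_{X,y}/\mathfrak{m}_{X,y}^2$ is surjective.

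Next I would carry out an incidence-variety Bertini count on the special fiber, in the spirit of \propref{prop:Bertini-regularity}. The hypothesis that $Z\cap X$ is supported on one point, combined with the regularity of $Z$ and $\sX$ and the flatness of $Z/S$, forces $Z$ and $X$ to meet transversally at $x$; in particular $Z_s=\{x\}$ scheme-theoretically, and $\sI_{Z_s,x}=\mathfrak{m}_{X,x}$. After base change to an algebraic closure $\overline{k}$, form the incidence variety $W\subset X\times |\sI_s(n)|$ of pairs $(y,[\sigma_s])$ with $y\in X\cap V(\sigma_s)$ such that either $V(\sigma_s)$ contains a component of $X$ through $y$, or the image of a local equation of $V(\sigma_s)$ in $\sO_{X,y}$ lies in $\mathfrak{m}_{X,y}^2$. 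By the surjectivity in Step~1, the fiber of $W$ over a closed point $y\neq x$ of $X$ is the projectivisation of a codimension-$(d+1)$ subspace of $H^0(\P^N_k,\sI_s(n))$, while the fiber over $x$ is the projectivisation of a codimension-$d$ subspace (every section of $\sI_s(n)$ already vanishes at $x$). Since $\dim X = d$, this gives $\dim W\le \dim|\sI_s(n)|-1$, the bound at $x$ requiring $d\ge 2$, so the image of $W$ in $|\sI_s(n)|$ is a proper closed subset. Descent from $\overline{k}$ to $k$ via density of $k$-points in the complement, exactly as at the end of the proof of \propref{prop:Bertini-regularity}, then furnishes a dense open $U\subset |\sI_s(n)|$ whose $k$-points $\sigma_s$ define hypersurfaces $H_s$ not containing any component of $X$ and with local equation in $\mathfrak{m}_{X,y}\setminus \mathfrak{m}_{X,y}^2$ at every closed point of $X\cap H_s$.

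To finish, apply \lemref{lem:Jannsen-Saito} to $P$ with $V_2=U$ and $V_1$ any dense open of $P_\eta$ one wishes to impose (e.g.\ $V_1=P_\eta$ itself). This produces a $K$-rational section $\sigma\in H^0(\P^N_S,\sI(n))$, primitive modulo $\pi$, whose hypersurface $H=V(\sigma)$ contains $Z$ by construction, is flat and projective over $S$ (flatness because $\sigma\notin \pi H^0(\P^N_S,\sI(n))$), and at each closed point $y$ of $H\cap \sX_s$ admits a local equation in $\mathfrak{m}_{\sX,y}\setminus\mathfrak{m}_{\sX,y}^2$ by the mod-$\pi$ lifting observation used at the end of the proof of \propref{prop:Bertini-regularity}. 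Since every closed point of $\sX\cdot H$ lies in the special fiber (by properness over $S$) and regularity of a Noetherian local ring localises to regularity at any prime below, this gives regularity of $\sX\cdot H$, proving (1); (2) is built into the construction.

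The main obstacle is the coordination between the special-fiber Bertini count and the lifting to the arithmetic scheme: one needs $n$ large enough that the $S$-projective bundle $P$ has a ``correct'' special fiber $|\sI_s(n)|$ so that \lemref{lem:Jannsen-Saito} is applicable, and at the same time the second-jet restriction must be uniformly surjective over $X$. The hypothesis $d\ge 2$ is used precisely where the $x$-stratum of $W$ is controlled; if $d=1$ the bad locus at $x$ would fill the whole parameter space, reflecting the fact that for $Z$ a divisor in $\sX$, a hypersurface section containing $Z$ either strictly contains $Z$ (and becomes singular along it) or equals $Z$ locally.
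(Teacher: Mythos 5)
Your overall strategy (reduce mod $\pi$, run an incidence-variety count on the special fiber, lift a good section using surjectivity of the reduction map) is the same as the paper's, and most of the steps are fine. But there is one genuine error at the crux of the argument: the claim that the hypotheses ``force $Z$ and $X$ to meet transversally at $x$; in particular $Z_s=\{x\}$ scheme-theoretically, and $\sI_{Z_s,x}=\mathfrak{m}_{X,x}$'' is false. Take $\sX=\P^2_A$ with local coordinates $t,u$ at $x$ and $Z=V(t^2-\pi,\,u)$: then $Z$ is regular (its local ring is a DVR with uniformizer $t$), integral, finite and flat over $S$, yet $Z\cap X=V(t^2,u)$ is a non-reduced point and the ideal of $Z\cap X$ in $\sO_{X,x}$ is $(t^2,u)\subsetneq\mathfrak{m}_{X,x}$. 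Whenever $Z$ is ramified over $S$ this happens, so transversality cannot be assumed. Consequently your assertion that the bad locus over $x$ is the projectivisation of a \emph{codimension-$d$} subspace of $H^0(\P^N_k,\sI_s(n))$ is unjustified: the sections of $\sI_s(n)$ restrict into $I_{W,x}$ with $W=Z\cap X$, and the relevant quotient is $I_{W,x}/(I_{W,x}\cap\mathfrak{m}_{X,x}^2)$, which can be as small as one-dimensional.

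What actually saves the proposition --- and what your proof is missing --- is the observation that this quotient is nevertheless \emph{nonzero}. The paper proves this via the embedding dimension: $W=Z\cap X\subset Z$ with $Z$ regular of dimension one gives $e_x(W)\le 1$, so if one had $I_{W,x}\subset\mathfrak{m}_{X,x}^2$ then $\mathfrak{m}_{W,x}/\mathfrak{m}_{W,x}^2=\mathfrak{m}_{X,x}/(\mathfrak{m}_{X,x}^2+I_{W,x})=\mathfrak{m}_{X,x}/\mathfrak{m}_{X,x}^2$ would have dimension $d\ge 2$, a contradiction. This gives codimension $\ge 1$ for the bad subspace at $x$, which (since $x$ is a single closed point) is all the dimension count needs; this is also the true point of entry of the hypothesis $d\ge 2$, rather than the ``$d$ versus $d+1$'' comparison you describe. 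With that substitution your argument goes through and coincides with the paper's proof; the use of Lemma~\ref{lem:Jannsen-Saito} on the bundle $P$ is harmless but unnecessary, as the paper only needs surjectivity of $H^0(\P^N_S,\sI(n))\to H^0(\P^N_k,\ov{\sI}(n))$ for $n\gg 0$.
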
  
\begin{proof}  
Let $W = Z\times_S X$ be the scheme-theoretic intersection of $Z$ with the   
reduced special fiber.   
We start by noting that the embedding dimension   
$e_x(W) := \dim_{k(x)} \mathfrak{m}_{W,x}/ \mathfrak{m}_{W,x}^2$ is at most $1$.   
Indeed, $W\subset Z$ and $Z$ is regular, finite and flat over $S$ by  
assumption. Hence $e_x(W) \leq e_x(Z)= \dim (Z) = 1$.   
As a consequence, if we let $I_{W,x}$ denote the ideal of $W$ in   
$\sO_{X,x}$, we see that     
$I_{W,x}/(I_{W,x}\cap \mathfrak{m}_{X,x}^2) \neq 0$.   
In fact, suppose that $I_{W,x} \subset \mathfrak{m}_{X,x}^2$.   
Then $\mathfrak{m}_{X,x}/( \mathfrak{m}_{X,x}^2+I_{W,x}) = \mathfrak{m}_{X,x}/   
\mathfrak{m}_{X,x}^2$ has dimension $d \ge 2$.  
But $\mathfrak{m}_{X,x}/( \mathfrak{m}_{X,x}^2+I_{W,x}) =  \mathfrak{m}_{W,x}/   
\mathfrak{m}_{W,x}^2$ has dimension at most one as shown above. This leads to  
a contradiction.

Let $\ov{\sI}$ be the ideal sheaf of $W$ in $\P^N_k$ and let   
$n \gg 0$ be any integer such that $\ov{\sI}(n)$ is generated by the  
global sections   
$V=H^0(\P^N_k, \ov{\sI}(n))\subset H^0(\P^N_k,  \sO(n))$.  
We now claim that there exists a non-empty open subset $U$ in the   
space $\mathbb{P}(V)$ such that for any $\sigma \in U(k)$, the image   
$\sigma_x$ of $\sigma$ in $\sO_{X,x}$ (for a closed point $x\in X$) is either   
a unit or an element of $\mathfrak{m}_{X,x}\setminus \mathfrak{m}_{X,x}^2$.  
  
Since the linear system associated to a basis of $V$ has base locus $W$, this   
condition is satisfied for $\sigma$ in $U'\subset V$ for each $y \neq x$ thanks   
to the  proof of Proposition~\ref{prop:Bertini-regularity}, with $U'$ open and   
non-empty. For $n \gg 0$, there is clearly another non-empty open   
$U''\subset V$ such that for $\sigma \in U''$, the restriction of $\sigma$   
has non-zero image in $I_{W,x}/(I_{W,x}\cap \mathfrak{m}_{X,x}^2)$   
(which is itself non-zero by the argument above).   
Let $U = U'\cap U''$.  
  
If $n \gg 0$, the map $a\colon H^0(\P^N_S, {\sI}(n)) \to   
H^0(\P^N_k, \ov{\sI}(n))$ is surjective.  
Then any $\sigma \in H^0(\P^N_S, {\sI}(n))$ such that $a(\sigma) \in U$ will   
satisfy the conditions of the proposition. Indeed,  
it is clear by our   
choice that $H=(\sigma)$ contains $Z$, while the regularity of   
$\sX\cdot H$ is proved exactly as in \propref{prop:Bertini-regularity}.  
\end{proof}  
  
\begin{remk}\label{remk:Hens-*}  
The reader can easily see that when $A$ is Henselian   
(which is the case for the rest of this   
text), the assumption in \propref{prop:Bertini-AK} that  
$Z\cap X$ be supported on one closed point $x \in X$, is redundant.  
\end{remk}

\section{Lifting of zero-cycles}\label{sec:Lifting}  
In this section, we shall recall the definitions of the Chow groups which  
are used in the statements of the main results.  
We shall then show how the 0-cycles on the special fiber can be  
lifted to good 1-cycles on $\sX$. Using this lifting, we shall give a  
proof of the base case of \thmref{thm:Main-1}, namely, the case of  
relative curves. This case will be used in the next section to prove  
the general case of \thmref{thm:Main-1}.  
We keep the notations of \S~\ref{sec:general-setting}.  
Throughout this section, we shall assume that the base ring   
$A$ is excellent and   
Henselian, with perfect residue field $k$ which is not necessarily infinite. % Since we can argue componentwise, we will assume until the end of the paper that $\sX$ is moreover connected.  
  
\subsection{The Chow groups of the model  and the special   
fiber}\label{sec:Chow-grp}  
Let $\sZ_1(\sX)$ be the free abelian group on the set of integral   
1-dimensional cycles in $\sX$. Let $\sR_1(\sX)$ be the subgroup of   
$\sZ_1(\sX)$ generated by the cycles which are rationally equivalent to zero   
(see, for example, \cite[\S~1]{GLL} or \cite[Chapter~20]{Fulton}).   
Let $\CH_1(\sX)=\sZ_1(\sX)/\sR_1(\sX)$ be the Chow group of 1-cycles on $\sX$   
modulo rational equivalence.   
  
We call an integral cycle $Z\in \sZ_1(\sX)$ \textit{good} if it is flat over   
$S$ and $Z\cap X_{\rm sing} = \emptyset$.   
We let $\sZ_1^g(\sX) \subset \sZ_1(\sX)$ be the free abelian group on the set   
of good cycles. In a similar spirit, we write   
$\sZ_1^{vg}(\sX)\subset \sZ_1^g(\sX)$ for the free abelian group on the set of   
integral flat $1$-cycles $Z$ which are good in the above sense and   
are regular as schemes. We call these cycles \textit{very good} on $\sX$.  
  
As $\sX$ is projective over $S$, it is an $FA$-scheme in the sense of  
\cite[2.2(1)]{GLL}. Therefore, the moving Lemma of   
Gabber, Liu and Lorenzini \cite[Theorem 2.3]{GLL} tells us that the  
canonical map  
\begin{equation}\label{eqn:GLL*}  
\frac{\sZ_1^g(\sX)}{\sZ_1^g(\sX)\cap \sR_1(\sX)} \to \CH_1(\sX)  
\end{equation}  
is an isomorphism.   
In other words, every cycle $\alpha \in \CH_1(\sX)$ has a representative   
$\alpha = \sum_{i=1}^n n_i [Z_i]$ with each $Z_i$ a good integral cycle.  
This will play a crucial role in the proofs of our main results.  
  
We now recall the definition of Levine-Weibel Chow group of 0-cycles on $X$ from   
\cite{LW} and its modified version from \cite{BK}.  
Let $X_{\rm reg}$ denote the disjoint union of the smooth loci of the   
$d$-dimensional irreducible components of $X$.  
A regular (or smooth) closed point of $X$ will mean a closed point lying in  
$X_{\rm reg}$.  
Let $Y \subsetneq X$ be a closed subset not containing any $d$-dimensional  
component of $X$ such that $X_{\rm sing} \subseteq Y$.   
Let $\sZ_0(X,Y)$ be the free abelian group on closed points of $X \setminus Y$.  
We shall often write $\sZ_0(X,X_{\rm sing})$ as $\sZ_0(X)$.  
  
\begin{defn}\label{defn:0-cycle-S-1}  
Let $C$ be a reduced scheme which is of pure dimension one over $k$.  
We shall say that a pair $(C, Z)$ is \emph{a good curve  
relative to $X$} if there exists a finite morphism $\nu\colon C \to X$  
and a  closed proper subscheme $Z \subsetneq C$ such that the following hold.  
\begin{enumerate}  
\item  
No component of $C$ is contained in $Z$.  
\item  
$\nu^{-1}(X_{\rm sing}) \cup C_{\rm sing}\subseteq Z$.  
\item  
$\nu$ is local complete intersection at every   
point $x \in C$ such that $\nu(x) \in X_{\rm sing}$.   
\end{enumerate}  
\end{defn}

Let $(C, Z)$ be a good curve relative to $X$ and let   
$\{\eta_1, \cdots , \eta_r\}$ be the set of generic points of $C$.   
Let $\sO_{C,Z}$ denote the semilocal ring of $C$ at   
$S = Z \cup \{\eta_1, \cdots , \eta_r\}$.  
Let $k(C)$ denote the ring of total  
quotients of $C$ and write $\sO_{C,Z}^\times$ for the group of units in   
$\sO_{C,Z}$. Notice that $\sO_{C,Z}$ coincides with $k(C)$   
if $|Z| = \emptyset$.   
As $C$ is Cohen-Macaulay, $\sO_{C,Z}^\times$  is the subgroup of group of units in the ring of total quotients $k(C)^\times$   
consisting of those $f\in \sO_{C,x}$ which are regular and invertible for every $x\in Z$ (see \cite{EKW}, Section 1 for further details).   
  
Given any $f \in \sO^{\times}_{C, Z} \inj k(C)^{\times}$, we denote by    
${\rm div}_C(f)$ (or ${\rm div}(f)$ in short)   
the divisor of zeros and poles of $f$ on $C$, which is defined as follows. If   
$C_1,\ldots, C_r$ are the irreducible components of $C$,   
and $f_i$ is the factor of $f$ in $k(C_i)$, we set   
${\rm div}(f)$ to be the $0$-cycle $\sum_{i=1}^r {\rm div}(f_i)$, where   
${\rm div}(f_i)$ is the usual   
divisor of a rational function on an integral curve in the sense of   
\cite{Fulton}. As $f$ is an invertible   
regular function on $C$ along $Z$, ${\rm div}(f)\in \sZ_0(C,Z)$.

By definition, given any good curve $(C,Z)$ relative to $X$, we have a   
push-forward map $\sZ_0(C,Z)\xrightarrow{\nu_{*}} \sZ_0(X)$.  
We shall write $\sR_0(C, Z, X)$ for the subgroup  
of $\sZ_0(X)$ generated by the set   
$\{\nu_*({\rm div}(f))| f \in \sO^{\times}_{C, Z}\}$.   
Let $\sR_0(X)$ denote the subgroup of $\sZ_0(X)$ generated by   
the image of the map $\sR_0(C, Z, X) \to \sZ_0(X)$, where  
$(C, Z)$ runs through all good curves relative to $X$.  
We let $\CH^{BK}_0(X) = \frac{\sZ_0(X)}{\sR_0(X)}$.  
  
If we let $\sR^{LW}_0(X)$ denote the subgroup of $\sZ_0(X)$ generated  
by the divisors of rational functions on good curves as above, where  
we further assume that the map $\nu: C \to X$ is a closed immersion,  
then the resulting quotient group ${\sZ_0(X)}/{\sR^{LW}_0(X)}$ is  
denoted by $\CH^{LW}_0(X)$. Such curves on $X$ are called the   
{\sl Cartier curves}. There is a canonical surjection  
$\CH^{LW}_0(X) \surj \CH^{BK}_0(X)$. The Chow group $\CH^{LW}_0(X)$ was  
discovered by Levine and Weibel \cite{LW} in an attempt to describe the  
Grothendieck group of a singular scheme in terms of algebraic cycles.  
The modified version $\CH^{BK}_0(X)$ was introduced in \cite{BK}.

We remark here that the definition of $\CH^{LW}_0(X)$ given above is  
mildly different from the one given in \cite{LW} because we do not  
allow non-reduced Cartier curves.   
However, it does agree  
with the definition  of \cite{LW}   
if $k$ is infinite by \cite[Lemmas~1.3, 1.4]{Levine-2}.  Note that over finite fields the situation is unclear (but see \cite{BKS} for the case of surfaces), since the standard norm trick to reduce to the case of infinite fields for comparison does not work for the Levine-Weibel Chow group. The situation is substantially better if one uses its variant \cite{BK} instead.

\subsection{Lifting 0-cycles on the special fiber  to 1-cycles on $\sX$}  
\label{sec:Lifting-prf}  
From the above definitions of $\CH_1(\sX)$ and $\CH^{LW}_0(X)$, it  
is not clear if the 1-cycles on $\sX$ always restrict to admissible  
0-cycles on  
$X$, nor if the restriction (whenever defined) preserves the rational equivalence. This question will be  
addressed in the next section. Here, we solve the reverse problem, namely,  
we show that the Levine-Weibel 0-cycles on $X$ can be lifted to good   
1-cycles on $\sX$, following the idea of \cite{EKW}. Using this lifting,  
we shall prove \thmref{thm:Main-1}.   
We fix an integer  $m$ prime to the exponential characteristic of $k$  
and let $\Lambda = {\Z}/{m\Z}$.  
For an abelian group $M$, we let $M_{\Lambda} = M \otimes_{\Z} \Lambda$.  
  
Let $[Z]\in \sZ_1^g(\sX)$ be an integral good 1-cycle.   
Intersecting $[Z]$ with the reduced special fiber $X$ gives rise to a   
0-cycle $[Z\cap X]$, which is supported in the regular locus of $X$.   
Here, $[Z\cap X]$ is the 0-cycle in $\sZ_0(X)$ associated to the  
(possibly non-reduced) 0-dimensional scheme-theoretic intersection  
$Z \cap X$.  
This gives rise to the {\sl restriction} homomorphism  
on the cycle group  
\begin{equation}\label{eq:restriction-map-generators}  
\wt{\rho}: \sZ_1^g(\sX)\to \sZ_0(X, X_{\rm sing}),   
\quad [Z]\mapsto [Z\cap X].  
\end{equation}  
  
To prove \thmref{thm:Main-1}, we begin by recalling the following result.   
The proof is classical, and in   
this form is essentially taken from \cite{EKW}.   
We review the proof in order to fix our notation.

\begin{prop}$($\cite[\S~4]{EKW}$)$\label{prop:rho-is-onto}  
Given a regular closed point $x \in X$,   
there exists an integral $1$-cycle $Z_x\subset \sX$ which is regular,   
finite and flat over $S$ such that $Z_x\times_S X = \{x\}$   
scheme-theoretically. In particular,   
the restriction map $\wt{\rho}$ of \eqref{eq:restriction-map-generators} is   
surjective.  
\end{prop}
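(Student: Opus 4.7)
The plan is to construct $Z_x$ as a global complete intersection of $d$ hypersurface sections through $x$ whose local equations complete the uniformizer $\pi$ of $A$ to a regular system of parameters of $\sO_{\sX,x}$. Since $x \in X_{\rm reg}$ and $\sX$ is regular, $\sO_{\sX,x}$ is regular of dimension $d+1$, the quotient $\sO_{X,x} = \sO_{\sX,x}/(\pi)$ is regular of dimension $d$, and one may pick $t_1, \ldots, t_d \in \fm_{\sX,x}$ whose reductions modulo $\pi$ form a regular system of parameters of $\sO_{X,x}$, giving a regular system of parameters $(\pi, t_1, \ldots, t_d)$ of $\sO_{\sX,x}$.

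To globalize, fix a projective embedding $\sX \hookrightarrow \P^N_S$ and let $\sI_x \subset \sO_\sX$ denote the ideal sheaf of $x$. For $n \gg 0$, the short exact sequence $0 \to \sI_x^2(n) \to \sI_x(n) \to (\sI_x/\sI_x^2)(n) \to 0$ combined with the Serre vanishing $H^1(\sX, \sI_x^2(n)) = 0$ makes the evaluation map $H^0(\sX, \sI_x(n)) \to (\sI_x/\sI_x^2)(n)_x$ surjective; after a trivialization of $\sO_\sX(n)$ at $x$, this identifies with a surjection onto $\fm_{\sX,x}/\fm_{\sX,x}^2$. Lift $t_1, \ldots, t_d$ to global sections $\sigma_1, \ldots, \sigma_d \in H^0(\sX, \sI_x(n))$ with these leading terms, and set $Z' := V(\sigma_1, \ldots, \sigma_d) \subset \sX$. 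At $x$, the classes of $\sigma_1, \ldots, \sigma_d$ in $\fm_{\sX,x}/\fm_{\sX,x}^2$ are linearly independent, so $(\sigma_1, \ldots, \sigma_d)$ extends to a regular system of parameters of $\sO_{\sX,x}$. In particular they form a regular sequence, $\sO_{Z',x}$ is a regular local ring of dimension one whose maximal ideal is generated by $\pi$, and the scheme-theoretic intersection $Z' \cap X$ coincides with the reduced point $\{x\}$ in a neighborhood of $x$.

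Let $Z_x$ be the unique irreducible component of $Z'_{\rm red}$ through $x$, endowed with its reduced structure; it agrees with $Z'$ in a neighborhood of $x$. Then $Z_x$ is proper over $S$, integral, one-dimensional, and not contained in $X$ (because $Z_x \cap X$ is zero-dimensional at $x$), so $Z_x \to S$ is finite and $Z_x = \Spec B$ for some finite domain $B$ over $A$. Since $A$ is Henselian, the finite $A$-algebra $B$ is local; the local computation gives that the stalk of $B/\pi B$ at $x$ is $k(x)$, which together with $B$ being local forces $B/\pi B = k(x)$, and since $\dim B = 1$ with $\fm_B = (\pi)$ this makes $B$ a DVR. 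Consequently $Z_x$ is regular, $A$-flat, and $Z_x \times_S X = \Spec(B/\pi B) = \{x\}$ scheme-theoretically. The cycle $Z_x$ is good by construction, $\wt{\rho}([Z_x]) = [x]$, and since $\sZ_0(X, X_{\rm sing})$ is generated by the classes of regular closed points of $X$, this yields the surjectivity of $\wt{\rho}$.

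The main obstacle is the globalization step: one must ensure that a purely local choice of regular system of parameters at $x$ can be promoted to genuine hypersurface sections on $\sX$ whose common zero locus realizes the expected regular one-dimensional local geometry near $x$. Serre vanishing cleanly resolves the first-order lifting problem, while the Henselian hypothesis on $A$ is essential for concluding that the resulting integral projective $S$-curve $Z_x$ is a single local scheme, and hence $\Spec$ of a DVR over $A$ with the required properties.
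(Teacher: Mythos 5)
There is a genuine gap at the very first step of your construction. You identify $\sO_{X,x}$ with $\sO_{\sX,x}/(\pi)$ and conclude that $(\pi, t_1,\ldots,t_d)$ is a regular system of parameters of $\sO_{\sX,x}$. This is only correct when the special fiber $\sX_s$ is reduced along the component through $x$. In general $X=(\sX_s)_{\red}$, and $\sO_{\sX,x}/(\pi)=\sO_{\sX_s,x}$ can be non-reduced, hence non-regular, even though $x$ is a regular point of $X$; in that case $\pi\in\fm_{\sX,x}^2$ and is not part of any regular system of parameters. A minimal example is $\sX=\Spec(A[u]/(u^2-\pi))$, a regular scheme, finite and flat over $A$, whose special fiber is $\Spec(k[u]/(u^2))$ and where $\pi=u^2\in\fm^2$; taking $\P^d$ over this ring gives examples in any relative dimension. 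In such cases your subsequent assertions $\fm_B=(\pi)$ and $B/\pi B=k(x)$ also fail: $B/\pi B$ has length equal to the multiplicity of the fiber component through $x$, so $Z_x\times_S\Spec(k)$ is a fat point (although the intersection with the \emph{reduced} fiber $X$ would still be reduced).

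This is precisely the point the paper's proof is careful about: since $\sO_{\sX,x}$ is a regular local ring, hence a UFD, and $\sO_{\sX,x}/\sqrt{(\pi)}=\sO_{X,x}$ is a domain, one has $\sigma^n=\pi c$ for a single prime element $\sigma\in\fm_{\sX,x}\setminus\fm_{\sX,x}^2$ and a unit $c$; it is $\sigma$, not $\pi$, that is the local equation of $X$ at $x$ and that must be completed to a regular system of parameters $(\sigma,a_1,\ldots,a_d)$ with the $\ov{a}_i$ generating $\fm_{X,x}$. With that correction your argument goes through and is essentially the paper's: the quotient $B$ is a DVR with uniformizer the image of $\sigma$, still finite and flat over the Henselian ring $A$, and $(a_1,\ldots,a_d,\sigma)=\fm_{\sX,x}$ gives $Z_x\cap X=\{x\}$ scheme-theoretically. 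Your globalization via Serre vanishing is a harmless (and slightly heavier) substitute for the paper's step of simply taking the closure of $\Spec(\sO_{\sX,x}/(a_1,\ldots,a_d))$ in $\sX$, and your use of Henselianness at the end matches the paper's.
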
  
\begin{proof}  
Let $x\in X_{\rm reg}$ be a closed point and let $\sO_{\sX, x}$ be the local   
ring of $\sX$ at $x$. Since $\sX$ is regular, $\cO_{\sX, x}$ is a regular   
local ring. In particular, it is a unique factorization domain.   
There is then a prime element $\sigma\in \mathfrak{m}_{\sX, x} \setminus   
\mathfrak{m}_{\sX, x}^2$ and an integer $n>0$ such that   
$\sigma^n = \pi c$, where $\pi\in \sO_{\sX,x}$ is the uniformizer of $A$ and $c$ is a unit.  
Indeed, $\pi$ can not be a product of distinct prime elements in   
$\sO_{\sX,x}$, since $\sO_{\sX, x}\otimes_A (A/(\pi)) = \cO_{\sX_s, x}$ has a   
unique minimal prime (its reduction, $\sO_{X, x}$ is a regular local ring).  
We can now complete $\sigma$ to a regular sequence   
$(\sigma, a_1,\ldots, a_d)$ generating the maximal ideal   
$\mathfrak{m}_{\sX, x}$ such that the images   
$(\ov{a}_1, \ldots, \ov{a}_d)$ in $\sO_{X, x} = \sO_{\sX,x}/(\sigma)$ form a   
regular sequence, generating the maximal ideal $\mathfrak{m}_{X, x}$.   
  
Let $\Spec({\sO_{\sX,x}/(a_1,\ldots, a_d)})$ be the closed subscheme of   
$\Spec ({\sO_{\sX,x}})$ associated to the ideal $(a_1,\ldots, a_d)$.  
It is clearly integral, regular, local, $1$-dimensional and flat over $S$.   
If we let $\wt{Z}_x$ denote its closure in $\sX$, then $\wt{Z}_x$ is   
projective and dominant of   
relative dimension zero over $A$. In particular, it is  
finite and flat over $S$. We can therefore write $\wt{Z}_x = \Spec(B)$.  
  
Since $S$ is Henselian, the finite $A$-algebra $B$ is totally split.   
Hence, there is a unique irreducible component $Z_x$ of $\wt{Z}_x$ such that   
$x\in Z_x$.  
The scheme $Z_x$ is then regular because its local ring at the unique closed   
point $x$ agrees with ${\sO_{\sX,x}}/(a_1,\ldots, a_d)$.  
Furthermore, $Z_x$ is integral, finite and flat over $S$ with  
$Z_x\times_S X = \{x\}$.  
\end{proof}  
  
Note that thanks to the proposition above, we have in fact shown that the   
composite map  
\[  
\sZ_1^{vg}(\sX) \inj \sZ_1^{g}(\sX) \xrightarrow{\wt{\rho}}   
\sZ_0(X, X_{\rm sing})  
\]  
is surjective.   
  
\subsection{The case of relative dimension one}\label{sec:dim-1}  
We continue with the assumption that $A$ is Henselian and $k$ is perfect  
(but not necessarily infinite).  
Suppose that $\dim_S(\sX)=1$ so that $\sX$ is a family of projective  
\textit{curves} over $S$. We shall now give the proof of \thmref{thm:Main-1}  
in this case.  
   
Since $X$ is reduced by construction, we have by   
\cite[Lemma~3.12]{BK}, the canonical isomorphisms   
$\CH_0^{LW}(X) \xrightarrow{\simeq} \CH^{BK}_0(X)   
\xrightarrow{\simeq} \Pic(X) \cong H^1_{\et}(X, \mathbb{G}_m)$.  
As a scheme, $\sX$ is integral and purely two-dimensional so that we can   
identify $\CH_1(\sX)$ with $\CH^1(\sX)$.  
Since $\sX$ is moreover separated, regular (hence locally factorial) and   
Noetherian, there are classical isomorphisms   
$\CH^1(\sX)  \xrightarrow{\simeq} \Pic(\sX) \cong H^1_{\et}(\sX, \mathbb{G}_m)$.   
Tensoring these groups with $\Lambda = \Z/m$, the Kummer sequence gives us    
injections   
\[  
\CH^1(\sX)_\Lambda  \xrightarrow{\cong}  H^1_{\et}(\sX, \mathbb{G}_m)_{\Lambda}    
\hookrightarrow H^2_{\et}(\sX, \Lambda(1))   
\]  
\[  
\CH^{LW}_0(X)_\Lambda  \xrightarrow{\cong}  H^1_{\et}(X, \mathbb{G}_m)_{\Lambda}   
\hookrightarrow H^2_{\et}(X, \Lambda(1)).  
\]  
  
Using these injections, we get a diagram of solid arrows  
\begin{equation}\label{eqn:case-of-curves}  
\xymatrix@C.8pc{    
& \sZ_1^g(\sX)_{\Lambda} \ar@{->>}[r]^-{\wt{\rho}} \ar@{->>}[d]  
\ar@{->>}[dl]_-{\alpha_{\sX}}   
& \sZ_0(X, X_{\rm sing})_{\Lambda} \ar@{->>}[d] \ar@{-->}[dl]   
\ar@{->>}[dr]^-{\alpha_X} & \\  
\CH_1(\sX)_{\Lambda} \ar[r]^-{\cong} \ar[dr]_{cyc^{\et}_{\sX}} & \  
\Pic(\sX)_{\Lambda} \ar@{^{(}->}[d]   
\ar[r]^-{\rho} & \Pic(X)_{\Lambda} \ar@{^{(}->}[d] & \CH^{LW}_0(X)_{\Lambda}   
\ar[dl]^-{cyc^{\et}_X}    
\ar[l]_-{\cong} \\  
& H^2_{\et}(\sX, \Lambda(1)) \ar[r]^-{\cong} & H^2_{\et}(X, \Lambda(1)). &}  
\end{equation}  
  
All horizontal arrows in the middle are induced by the restriction to the  
reduced special fiber. In particular, the two squares in the middle   
are commutative. The two triangles on the top left and top right  
can be easily seen to be commutative by recalling the construction of the   
isomorphism between the  
Picard group and the Chow group of codimension one cycles.  
The two triangles on the bottom left and bottom right commute by the  
definition of the cycle class maps to {\'e}tale cohomology.  
  
The bottom horizontal arrow ~\eqref{eqn:case-of-curves}   
is an isomorphism by the rigidity theorem  
for {\'e}tale cohomology (a consequence of the proper base change theorem,   
see \cite[Chapter~VI, Corollary~2.7]{Milne}).   
The top horizontal arrow is surjective by   
Proposition~\ref{prop:rho-is-onto}. From the commutativity of   
\eqref{eqn:case-of-curves}, we immediately see that the canonical map   
$\alpha_{\sX}\colon \sZ_1^g(\sX)_{\Lambda} \surj \CH_1(\sX)_{\Lambda}$ factors via   
$\wt{\rho}$. Equivalently, we have ${\rm Ker}(\wt{\rho}) \subseteq   
{\rm Ker} (\alpha_{\sX})$.   
This gives the dashed arrow $\wt{\gamma}\colon \sZ_0(X, X_{\rm sing})_\Lambda \to   
\CH_1(\sX)_\Lambda$, which is automatically surjective.

A second inspection of \eqref{eqn:case-of-curves},   
using this time the fact that   
$\CH_1(\sX)_\Lambda \to H^2_{\et}(\sX, \Lambda(1))$ is injective,   
shows similarly that   
${\rm Ker} (\alpha_X) \subseteq {\rm Ker} (\wt{\gamma})$.  
Combining all this, we finally get a  
surjective group homomorphism $\gamma$ fitting in the   
commutative diagram  
\begin{equation}\label{eqn:case-of-curves-0}  
\xymatrix@C.8pc{    
\CH_1(\sX)_{\Lambda}  \ar@{^{(}->}[d] & \CH^{LW}_0(X)_{\Lambda}   
\ar[l]_-{\gamma}  \ar@{^{(}->}[d] \\  
H^2_{\et}(\sX, \Lambda(1)) \ar[r]^{\cong} & H^2_{\et}(X, \Lambda(1)).}  
\end{equation}   
  
We also deduce from ~\eqref{eqn:case-of-curves-0} that $\gamma$ has to be   
injective as well. Since $\gamma$ is clearly an inverse of the map   
$\wt{\rho}$ on the generators, we have then shown the following result  
which proves \thmref{thm:Main-1} and a general form of the  
part (1) of \thmref{thm:Main-2} for curves.  
  
\begin{prop}\label{prop:lifting-for-curves}  
Let $A$ be an excellent Henselian discrete valuation ring with  perfect residue  
field.  
Let $\sX$ be a regular scheme, flat and projective over $S$ of relative   
dimension one. Then the restriction homomorphism $\wt{\rho}$ of    
\eqref{eq:restriction-map-generators}    
induces an isomorphism  
\[  
\rho \colon \CH_1(\sX)_{\Lambda} \xrightarrow{\cong} \CH^{LW}_0(X)_\Lambda.  
\]  
\end{prop}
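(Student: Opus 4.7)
The plan is to reduce the statement to a comparison of Picard groups (via the identifications available because the relative dimension is one) and then to leverage the rigidity theorem for \'etale cohomology on the Kummer side.

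First, since $\sX$ is integral, regular, separated and Noetherian of pure dimension two, I would invoke the standard identifications $\CH_1(\sX) = \CH^1(\sX) \xrightarrow{\cong} \Pic(\sX) \cong H^1_{\et}(\sX, \mathbb{G}_m)$. On the special fiber, which is a reduced curve over $k$, \cite[Lemma~3.12]{BK} yields $\CH^{LW}_0(X) \xrightarrow{\cong} \Pic(X) \cong H^1_{\et}(X, \mathbb{G}_m)$. Tensoring with $\Lambda$ and using the Kummer short exact sequence on each side produces inclusions $\CH_1(\sX)_\Lambda \hookrightarrow H^2_{\et}(\sX, \Lambda(1))$ and $\CH^{LW}_0(X)_\Lambda \hookrightarrow H^2_{\et}(X, \Lambda(1))$. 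The crucial input is now the proper base change theorem: since $\sX \to S$ is proper and $A$ is Henselian, pullback along $X \hookrightarrow \sX$ gives an isomorphism $H^2_{\et}(\sX, \Lambda(1)) \xrightarrow{\cong} H^2_{\et}(X, \Lambda(1))$.

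Next, I would assemble the commutative diagram of the shape \eqref{eqn:case-of-curves}, in which the two squares built out of the restriction to the reduced special fiber commute by naturality of the Picard functor and of the cycle class map to \'etale cohomology, while the two outer triangles commute because the isomorphisms $\CH_1(\sX)_\Lambda \cong \Pic(\sX)_\Lambda$ and $\CH_0^{LW}(X)_\Lambda \cong \Pic(X)_\Lambda$ are compatible with sending a cycle to its associated line bundle, so both \'etale cycle classes are just the Kummer boundary of these line bundles. Proposition~\ref{prop:rho-is-onto} guarantees that the top arrow $\wt{\rho}: \sZ_1^g(\sX)_\Lambda \twoheadrightarrow \sZ_0(X, X_{\sing})_\Lambda$ is surjective: lifting each regular closed point of $X$ to a regular $1$-cycle on $\sX$ meeting $X$ scheme-theoretically at that point shows that every generator of $\sZ_0(X,X_{\sing})_\Lambda$ is the image of some good $1$-cycle.

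Then the argument is a diagram chase. Because the composite $\sZ_1^g(\sX)_\Lambda \to \CH_1(\sX)_\Lambda \hookrightarrow H^2_{\et}(\sX, \Lambda(1)) \xrightarrow{\cong} H^2_{\et}(X, \Lambda(1))$ factors through $\wt{\rho}$ and then through $\CH^{LW}_0(X)_\Lambda$, and because $\CH_1(\sX)_\Lambda$ injects into $H^2_{\et}(\sX, \Lambda(1))$, the kernel of $\wt{\rho}$ is contained in the kernel of the projection $\sZ_1^g(\sX)_\Lambda \twoheadrightarrow \CH_1(\sX)_\Lambda$. This produces a surjection $\wt{\gamma}: \sZ_0(X, X_{\sing})_\Lambda \twoheadrightarrow \CH_1(\sX)_\Lambda$. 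Running the analogous argument on the other side, using that $\CH^{LW}_0(X)_\Lambda \hookrightarrow H^2_{\et}(X, \Lambda(1))$, shows that $\wt{\gamma}$ kills the relations defining $\CH^{LW}_0(X)_\Lambda$, hence descends to a surjection $\gamma: \CH^{LW}_0(X)_\Lambda \twoheadrightarrow \CH_1(\sX)_\Lambda$.

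Finally, I would verify that $\gamma$ is also injective by looking again at the commutative square \eqref{eqn:case-of-curves-0}: both vertical maps are injective and the bottom horizontal map is an isomorphism, so $\gamma$ is injective. By construction $\gamma$ is a two-sided inverse to the induced restriction map $\rho: \CH_1(\sX)_\Lambda \to \CH^{LW}_0(X)_\Lambda$, which therefore is an isomorphism. The main obstacle is really conceptual rather than technical: one needs \cite[Lemma~3.12]{BK} to pass from $\CH^{LW}_0(X)$ to $\Pic(X)$ for a possibly reducible, non-reduced-component-free reduced curve, and one needs the surjectivity of $\wt{\rho}$ from Proposition~\ref{prop:rho-is-onto}, which depends on $A$ being Henselian so that a flat finite $A$-subscheme of $\sX$ breaks off an irreducible component meeting $X$ in the prescribed single closed point.
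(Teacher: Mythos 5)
Your proposal is correct and follows essentially the same route as the paper's own proof in \S~\ref{sec:dim-1}: identify both Chow groups with Picard groups, inject them into degree-two \'etale cohomology via the Kummer sequence, invoke proper base change for the rigidity isomorphism $H^2_{\et}(\sX,\Lambda(1))\cong H^2_{\et}(X,\Lambda(1))$, and run the same diagram chase through \eqref{eqn:case-of-curves} using the surjectivity from Proposition~\ref{prop:rho-is-onto}. The only cosmetic difference is that in the step showing $\wt{\gamma}$ kills relations, the decisive injectivity is that of $\CH_1(\sX)_\Lambda\hookrightarrow H^2_{\et}(\sX,\Lambda(1))$ (together with the factorization of $cyc^{\et}_X$ through $\Pic(X)_\Lambda$), which you use implicitly anyway.
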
  
  
\section{Proof of \thmref{thm:Main-1}}\label{sec:Prf-1}  
We shall now prove \thmref{thm:Main-1} using the Bertini theorems of  
\S~\ref{sec:Bertini} and the lifting proposition of \S~\ref{sec:Lifting}.  
We assume $A$ to be an excellent Henselian discrete valuation ring with   perfect  
residue field $k$. The rest of the assumptions and notations are same as  
in \S~\ref{sec:general-setting}.   
%We assume that $\dim_S \sX \geq 1$.  
  
\subsection{Factorization of $\alpha_{\sX}$ via $\wt{\rho}$}  
\label{sec:Factor-rho}  
We begin by showing the first part of Theorem~\ref{thm:Main-1},   
i.e., we show that the canonical surjection $\alpha_{\sX}:  
\sZ^g_1(\sX)_{\Lambda} \surj \CH_1(\sX)_{\Lambda}$ factors through $\wt{\rho}$.  
This is a consequence of the following result, whose proof goes   
through the steps of \cite[Proposition~4.1]{EKW}, using   
Proposition~\ref{prop:Bertini-AK} instead of the Bertini Theorem of   
Jannsen-Saito proved in \cite{SS}.

\begin{prop}$($\cite[Proposition~4.1]{EKW}$)$\label{prop:can-lifting}   
Let $Z \in \sZ_1^g(\sX)$ be a good, integral 1-cycle and let   
$n[x] = [Z \cap X]$ for some $x \in X_{\rm reg}$ and $n>0$.   
Then $\alpha_{\sX}( Z - n Z_x) = 0$ in $\CH_1(\sX)_{\Lambda}$,  
where $Z_x$ is as in Proposition~\ref{prop:rho-is-onto}.  
\end{prop}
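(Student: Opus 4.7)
The plan is to reduce to the relative dimension one case of \thmref{thm:Main-1}, which is \propref{prop:lifting-for-curves}. Concretely, I will construct a regular, flat, projective $S$-subscheme $\sY\subset\sX$ of relative dimension one, containing both $Z$ and $Z_x$, and such that $x$ lies in the regular locus of $Y:=(\sY_s)_{\rm red}$. Granting such $\sY$, the cycles $Z$ and $Z_x$ are both good $1$-cycles on $\sY$, and $\wt{\rho}_{\sY}([Z])=n[x]=\wt{\rho}_{\sY}(n[Z_x])$ in $\sZ_0(Y\setminus Y_{\rm sing})_\Lambda$. Since \propref{prop:lifting-for-curves} applied to $\sY$ identifies $\CH_1(\sY)_\Lambda$ with $\CH_0^{LW}(Y)_\Lambda$ via the restriction map, the equality of the two restrictions forces $[Z]=n[Z_x]$ in $\CH_1(\sY)_\Lambda$. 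Pushing forward along the closed immersion $\sY\hookrightarrow\sX$ then yields $\alpha_{\sX}(Z-nZ_x)=0$ in $\CH_1(\sX)_\Lambda$, which is the desired conclusion.

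To construct $\sY$ when $d\ge 2$, I iterate \propref{prop:Bertini-AK} a total of $d-1$ times. Embed $\sX\subset \P^N_S$. At each stage I pick a hypersurface $H\subset \P^N_S$ of sufficiently high degree containing the two cycles, general enough (in the sense of \lemref{lem:Jannsen-Saito}) that the intersection with the current regular scheme is again regular, flat and projective over $S$, and such that the reduced special fiber of the new scheme is regular at $x$; the latter is arranged by forcing the local equation of $H$ at $x$ to lie in $\mathfrak{m}_{X,x}\setminus\mathfrak{m}^2_{X,x}$, exactly as in the proof of \propref{prop:Bertini-regularity}. By \remref{remk:Hens-*}, the single-point-support hypothesis of \propref{prop:Bertini-AK} is automatic in the Henselian setting, so the iteration is legitimate. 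After $d-1$ such steps we arrive at a regular surface $\sY=\sX\cap H_1\cap\ldots\cap H_{d-1}$ of relative dimension one containing $Z\cup Z_x$ and regular at $x$ in the special fiber.

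The principal obstacle is the regularity hypothesis on the contained cycle in \propref{prop:Bertini-AK}: the proposition is stated for a single regular integral relative $0$-cycle, whereas I need $H$ to contain the reducible union $Z\cup Z_x$, with $Z$ only assumed good (not very good). The critical point to be verified is that the embedding-dimension estimate $e_x(W)\le 1$ driving the proof of \propref{prop:Bertini-AK} still yields a useful bound when $W=(Z\cup Z_x)\cap X$: this should follow from the fact that the local analysis takes place only at the single smooth point $x\in X$, where the components $Z$ and $Z_x$ meet $X$, and can be carried out componentwise, noting that $Z_x$ is regular by construction. A subsidiary technical point, which can be handled by passing to the strict Henselisation $A^{sh}$ of $A$ (whose residue field is algebraically closed) and descending via a standard norm argument valid because $m$ is invertible in $k$, is that the Bertini results of \S~\ref{sec:Bertini} presuppose an infinite residue field, whereas in \S~\ref{sec:Prf-1} only perfection of $k$ is assumed.
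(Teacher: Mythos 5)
There is a genuine gap, and it lies exactly where you flagged it: the construction of a single relative curve $\sY\subset\sX$ containing $Z\cup Z_x$. This is not merely a verification left to the reader --- it can be impossible. Take $A=k[[\pi]]$, $\sX=\P^2_A$ with affine coordinates $u,v$ at $x=(0,0)$, $Z$ the closure of $V(u^2-\pi^3,\,v-\pi)$ and $Z_x=V(u,v)$. Then $Z$ is integral, finite and flat over $A$ with $[Z\cap X]=2[x]$, hence a good $1$-cycle, but it is singular at $x$: the image of $I_{Z,x}$ in $\fm_{\sX,x}/\fm^2_{\sX,x}$ is the line spanned by $[v]-[\pi]$, while the image of $I_{Z_x,x}$ is $\langle [u],[v]\rangle$, and these meet in $0$. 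Since the local equation at $x$ of any hypersurface containing $Z\cup Z_x$ lies in $I_{Z,x}\cap I_{Z_x,x}\subset\fm^2_{\sX,x}$, \emph{no} hypersurface through $Z\cup Z_x$ meets $\sX$ transversally at $x$, so the very first step of your iteration cannot be carried out. The ``componentwise'' argument you sketch does not repair this: the embedding-dimension bound $e_x(W)\le 1$ in \propref{prop:Bertini-AK} is a statement about a single \emph{regular} integral cycle, and neither the non-regularity of $Z$ nor the reducibility of $Z\cup Z_x$ (two distinct branches through $x$ already force the relevant conormal space to drop) is controlled by looking at the components separately. Even after normalizing $Z$, putting two regular curves through a common point into one regular hypersurface section is a claim that would need its own proof and is nowhere near the statement of \propref{prop:Bertini-AK}.

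The paper's proof is organized precisely to avoid ever asking one hypersurface to contain both curves. It argues by induction on $d$: for $Z$ regular it chooses $H\supset Z$ of relative dimension $d-1$ and, separately, a relative curve $H'\supset Z_x$, arranges (using that $x\in X_{\rm reg}$, so $H$ and $H'$ can be made transverse there) that the auxiliary cycle $Z'':=H\cap H'$ is regular with $Z''\cap X=\{x\}$ reduced, and then chains $\alpha_H(Z-nZ'')=0$ (induction hypothesis on $H$) with $\alpha_{H'}(Z''-Z_x)=0$ (the curve case, \propref{prop:lifting-for-curves}) before pushing forward to $\sX$. The case of singular $Z$ --- which your proposal does not treat at all, and which my example shows is fatal --- is handled by Bloch's trick: replace $Z$ by its normalization $Z^N$ sitting inside $\P^M_{\sX}$ and run the regular case there. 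A secondary point: your reduction to an algebraically closed residue field via $A^{sh}$ and a norm argument is not valid as stated, since the finite subextensions of $k^{sep}/k$ can have degree divisible by $m$ and the condition ``$m$ invertible in $k$'' is irrelevant here; the paper instead uses the standard pro-$\ell$-extension argument, which only introduces extensions of degree prime to $\ell$ for each $\ell\mid m$ while still making the residue field infinite.
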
  
\begin{proof}  
By the standard pro-$\ell$-extension argument, we can assume that the   
residue field of $A$ is infinite.   
The proof is now by induction on the relative dimension of $\sX$ over $S$.  
The case $d = 0$ is trivial and    
the case $d=1$ is provided by Proposition~\ref{prop:lifting-for-curves}.  
We now assume that $d \ge 2$.  
  
Assume first that $Z$ is regular as well.   
The general case will be treated later, using a trick due to   
Bloch \cite[Appendix~A]{EW}).   
By an iterated application of Proposition~\ref{prop:Bertini-AK},   
we can find   
\begin{enumerate}  
\item  
a hypersurface section $H$ of $\sX$ which is regular, flat and projective   
over $S$ such that $Z \subset H$, and   
\item  
a relative curve $H'$ over $S$ (i.e., $\dim_S H'=1$) which is regular, flat   
and projective over $S$ and contains $Z_x$.   
\end{enumerate}  
  
We can also assume that $Z'':=H'\cap H$ is regular as well, and that   
$H'\cap H \cap X$ consists only of the reduced point $x$.  
Note that we can do this since $x \in X$ is in the regular locus of   
$X$, so that we can choose $H'$ and $H$ which meets transversely there.  
  
By our induction hypothesis, we have that $\alpha_H(Z-nZ'') = 0$ in   
$\CH_1(H)_{\Lambda}$. Moreover, it follows from  
\propref{prop:lifting-for-curves} that $\alpha_{H'}(Z''-Z_x) =0$ in   
$\CH_1(H')_{\Lambda} =\Pic(H')_{\Lambda}$.   
In particular, we get $n \alpha_{H'}(Z''-Z_x) =0$.   
But then, we get   
\[   
\alpha_{\sX} ( Z-nZ_x) = (\iota_H)_* (\alpha_H( Z-nZ'')) +   
(\iota_{H'})_* (\alpha_{H'}(nZ''-nZ_x)) =0  
\]  
in $\CH_1(\sX)_{\Lambda}$, as required.  
Here, $\iota_H$ (resp. $\iota_{H'}$) is the inclusion  
$H \inj \sX$ (resp. $H' \inj \sX$).  
  
Suppose now that $Z$ is not necessarily regular.  
Following an idea of Bloch, we let   
$Z^N$ be the normalization of $Z$. Since $A$ is excellent and  
$Z$ is finite over $A$ (as it is a good 1-cycle),   
the map $Z^N\to Z$ is finite. In particular, there is a factorization  
\[   
\xymatrix@C.8pc{   
Z^N \ar@{^{(}->}[r] \ar[d] & \P^M_{\sX} \ar[d]^q \\   
Z \ar@{^{(}->}[r] & \sX,}  
\]  
where $q$ is the canonical projection.  
We are then reduced to prove the statement in $\P^M_{\sX}$    
for $Z^N$ and any regular lift of $Z_x$ to $\P^M_{\sX}$, chosen so that it contains $Z^N \cap \P^M_X$.
%Such a regular lift of $Z_x$ can be obtained by choosing a rational section   
%$\sX \hookrightarrow \P^M_{\sX}$ of the projection $q$ and taking the image of   
%$Z_x$ under this section.  
Since $Z^N$ is now regular, the claim follows from the previous case.  
\end{proof}  
  
An immediate consequence of \propref{prop:can-lifting} is the following.   
  
\begin{cor}\label{cor:Factor}  
The lifting of 0-cycles of Proposition~\ref{prop:rho-is-onto} gives rise to  
a well-defined group homomorphism  
$\wt{\gamma}: \sZ_0(X, X_{\rm sing})_{\Lambda} \to \CH_1(\sX)_{\Lambda}$  
such that the diagram  
\begin{equation}\label{eqn:Factor-0}  
\xymatrix@C.8pc{  
\sZ_1(\sX)_{\Lambda} \ar[r]^-{\wt{\rho}} \ar[dr]_-{\alpha_{\sX}} &  
\sZ_0(X, X_{\rm sing})_{\Lambda} \ar[d]^-{\wt{\gamma}} \\  
& \CH_1(\sX)_{\Lambda}}  
\end{equation}  
commutes.  
\end{cor}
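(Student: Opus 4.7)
The plan is to exploit the fact that, because $A$ is Henselian and any good integral $1$-cycle $Z$ is finite, flat and integral over $A$, the local ring of $Z$ is already local, so the closed fiber $Z\cap X$ is automatically supported at a single regular point of $X$ with some multiplicity $n > 0$. Thus $\wt{\rho}$ sends each generator $[Z]$ of $\sZ^g_1(\sX)$ to a $0$-cycle of the form $n[x]$ with $x\in X_{\rm reg}$, which is exactly the shape of input appearing in \propref{prop:can-lifting}.

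The first step is to construct $\wt{\gamma}$ on generators. For every closed point $x\in X_{\rm reg}$, \propref{prop:rho-is-onto} yields at least one regular, finite, flat integral $1$-cycle $Z_x\subset \sX$ with $Z_x\cap X = \{x\}$ scheme-theoretically. I set $\wt{\gamma}([x]) := [Z_x] \in \CH_1(\sX)_{\Lambda}$ and need to verify that this is independent of the chosen lift $Z_x$. If $Z'_x$ is another such lift, then $Z'_x$ is itself a good integral $1$-cycle with $[Z'_x\cap X] = 1\cdot [x]$, so applying \propref{prop:can-lifting} with $n=1$ gives $\alpha_{\sX}(Z'_x - Z_x) = 0$ in $\CH_1(\sX)_{\Lambda}$. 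Hence $[Z'_x] = [Z_x]$ in $\CH_1(\sX)_{\Lambda}$, so the assignment descends to a well-defined homomorphism
\[
\wt{\gamma}\colon \sZ_0(X, X_{\rm sing})_{\Lambda} \longrightarrow \CH_1(\sX)_{\Lambda}
\]
by $\Z$-linear extension (tensoring with $\Lambda$ is harmless since the target is already a $\Lambda$-module).

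For commutativity of \eqref{eqn:Factor-0}, it suffices by linearity to check the identity $\wt{\gamma}\circ\wt{\rho} = \alpha_{\sX}$ on a single good integral $1$-cycle $Z$. By the Henselian observation at the start, $\wt{\rho}(Z) = n[x]$ for a unique closed point $x\in X_{\rm reg}$ and some integer $n>0$. Then
\[
\wt{\gamma}(\wt{\rho}(Z)) = n\,\wt{\gamma}([x]) = n[Z_x] \in \CH_1(\sX)_{\Lambda},
\]
while \propref{prop:can-lifting} applied to $Z$ and this very $x$ and $n$ gives $\alpha_{\sX}(Z) = n[Z_x]$ in $\CH_1(\sX)_{\Lambda}$. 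The two expressions agree, so the triangle commutes on generators and hence on all of $\sZ^g_1(\sX)_{\Lambda}$.

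The only conceptually delicate step is the well-definedness of $\wt{\gamma}$, but this is precisely what \propref{prop:can-lifting} is designed to deliver; once it is invoked in the $n=1$ case, the rest is formal bookkeeping. No further Bertini or moving-lemma input is needed at this stage.
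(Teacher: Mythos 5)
Your proof is correct and follows essentially the same route as the paper: use the Henselian property to see that $\wt{\rho}([Z])=n[x]$ for a single regular point $x$, define $\wt{\gamma}$ on generators by $[x]\mapsto [Z_x]$, and invoke Proposition~\ref{prop:can-lifting} to get $\alpha_{\sX}([Z])-\wt{\gamma}\circ\wt{\rho}([Z])=\alpha_{\sX}([Z]-n[Z_x])=0$. Your extra check that the class $[Z_x]$ is independent of the chosen lift (via the $n=1$ case of Proposition~\ref{prop:can-lifting}) is a worthwhile observation the paper leaves implicit, though strictly speaking one could also just fix a choice of $Z_x$ for each $x$ since $\sZ_0(X,X_{\rm sing})$ is free.
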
  
\begin{proof}  
Let $Z \in \sZ_1^g(\sX)$ be a good, integral 1-cycle. Since $A$ is Henselian  
and $Z$ is finite over $A$, the intersection $Z \cap X$ must be supported  
on a (regular) closed point, say, $x \in X$. In particular, we must have  
$[Z \cap X] = n[x]$ for some integer $n > 0$.   
Now, it follows from \propref{prop:can-lifting} that  
\[  
\alpha_{\sX}([Z]) - \wt{\gamma} \circ \wt{\rho}([Z]) =  
\alpha_{\sX}([Z] - n[Z_x]) = 0  
\]  
and this proves the corollary.    
\end{proof}

\subsection{Factorization of $\wt{\gamma}$ through rational   
equivalence}\label{sec:Factor-gamma}  
Now that we have constructed the map $\wt{\gamma}$ at the level of the  
cycle groups, our next goal is to show that it factors through the   
cohomological Chow group   
$\CH^{LW}_0(X)_{\Lambda}$ of the reduced special fiber $X$.  
In fact, we shall show (probably) more in the sense that    
$\wt{\gamma}$ actually has a factorization  
\begin{equation}\label{eqn:gamma-BK}  
\wt{\gamma}: \sZ_0(X, X_{\rm sing})_{\Lambda} \surj \CH^{LW}_0(X)_{\Lambda}  
\surj \CH^{BK}_0(X)_{\Lambda} \to \CH_1(\sX)_{\Lambda}.  
\end{equation}

As we will see below, apart from giving us a stronger statement,  
the approach of working with $\CH^{BK}_0(X)$ also allows  
us to simplify the Cartier curves that give  
relations in $\sR^{LW}_0(X)$ which we want to kill  
in $\CH_1(\sX)_{\Lambda}$. It allows us to assume that the Cartier curves  
are regularly embedded in $X$. This is an essential requirement in our  
proof.

It is not known if the canonical map $\CH^{LW}_0(X) \surj \CH^{BK}_0(X)$ is  
an isomorphism in general. We refer to \cite[Theorem~3.17]{BK}  
for some positive results.

We shall closely follow the proof of \cite[Theorem~5.1]{EKW}   
(and we keep similar notations for the reader's convenience),   
with one simplification and one complication.  
The simplification is that using the Levine-Weibel Chow group (or, rather, its variant introduced in \cite{BK}),  
we don't have to deal with the   
``type-$1$'' relations (see \cite[\S~2.2]{EKW}),   
arising from the relations in the Suslin homology group $H_0^{S}(X_{\rm reg})$.  
On the other hand, the complication is that without any assumption on the   
geometry of $X$, we have to consider arbitrary l.c.i. curves $C$   
(and not simply SNC subcurves in $X$ as in \textit{loc.cit.}).  
Note that these l.c.i. curves may not even be embedded inside $X$.   
In order to lift our complicated relations in $X$ to the model $\sX$,   
we shall use the argument of \cite[Lemma 2.5]{GLL}.

We will need the following commutative algebra Lemma whose proof can be  
obtained from \cite[Theorem~16.3]{Matsumura}.  
  
\begin{lem}\label{lem:regularsequences}  
Let $R$ be a Noetherian local ring and let $I\subset R$ be an ideal    
generated by a regular sequence   
$a_1,\ldots, a_n$. Let $b_1,\ldots, b_n\in I$ be elements such that the image   
of $\{b_1,\ldots, b_n\}$ in $I/I^2$ is a basis over $R/I$.   
Then $b_1,\ldots, b_n$ is a regular sequence in $R$.  
\end{lem}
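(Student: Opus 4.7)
The plan is to proceed in two steps: first, use the hypothesis on $I/I^2$ together with Nakayama's lemma to promote the $b_i$ from elements of $I$ to a generating set of $I$; second, transfer the regular-sequence property from $a_1,\ldots,a_n$ to $b_1,\ldots,b_n$ via an invertible change of basis matrix, using the Koszul complex as the bridge.

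For the first step, I note that by the usual convention a regular sequence in a Noetherian local ring consists of non-units, so $a_1,\ldots,a_n \in \mathfrak{m}$ and hence $I \subset \mathfrak{m}$. The hypothesis that $\{\overline{b}_1,\ldots,\overline{b}_n\}$ is a basis of $I/I^2$ over $R/I$ gives in particular that these classes generate, so $(b_1,\ldots,b_n) + I^2 = I$. Writing this as $I = (b_1,\ldots,b_n) + I\cdot I \subset (b_1,\ldots,b_n) + \mathfrak{m} I$ and using that $I$ is finitely generated, Nakayama's lemma yields $I = (b_1,\ldots,b_n)$.

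For the second step, write $b_i = \sum_{j} c_{ij} a_j$ with $c_{ij} \in R$, and set $C = (c_{ij})$. A standard property of regular sequences (which follows, for instance, from the fact that the Koszul complex on $a_1,\ldots,a_n$ resolves $R/I$, so that $I/I^2 \simeq \mathrm{Tor}^R_1(R/I,R/I)$ is free of rank $n$ over $R/I$) says that $\{\overline{a}_1,\ldots,\overline{a}_n\}$ is itself a basis of $I/I^2$ over $R/I$. Combined with the hypothesis on the $b_i$, this forces the matrix $\overline{C}$ obtained by reducing $C$ modulo $I$ to be invertible over $R/I$. Since $I \subset \mathfrak{m}$, reducing further modulo $\mathfrak{m}$ still yields an invertible matrix over the residue field, so $\det(C)$ is a unit in $R$.

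Now $C$ defines an $R$-linear automorphism of $R^n$, which induces an isomorphism of Koszul complexes $K_\bullet(b_1,\ldots,b_n;R) \xrightarrow{\cong} K_\bullet(a_1,\ldots,a_n;R)$. Since $a_1,\ldots,a_n$ is a regular sequence, the target is acyclic in positive degrees, hence so is the source, and this is equivalent to $b_1,\ldots,b_n$ being a regular sequence in $R$. The only subtle point (essentially the content of the cited Matsumura, Theorem~16.3) is the verification that the invertible change of basis really does induce an isomorphism of Koszul complexes respecting the differentials — a standard but worth-recording check using functoriality of $\Lambda^\bullet$.
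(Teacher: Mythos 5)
Your argument is correct. Both steps work: the Nakayama argument (valid because a regular sequence in a Noetherian local ring consists of non-units, so $I\subset\mathfrak m$ and $I^2\subset\mathfrak m I$) gives $I=(b_1,\dots,b_n)$, the freeness of $I/I^2$ on $\bar a_1,\dots,\bar a_n$ makes the reduction $\overline{C}$ a base-change matrix between two bases and hence invertible over the local ring $R/I$, so $\det C$ is a unit in $R$, and the induced automorphism of $R^n$ does intertwine the two Koszul differentials (contraction with $\alpha$ pulls back under $u$ to contraction with $\alpha\circ u=\beta$). The only hypothesis you use silently is that the final implication ``$K_\bullet(b_1,\dots,b_n;R)$ acyclic in positive degrees $\Rightarrow$ $b_1,\dots,b_n$ regular'' requires the $b_i$ to lie in the Jacobson radical of a Noetherian ring; this holds since $b_i\in I\subset\mathfrak m$, but it is worth saying, as the equivalence is false without it. Your route is, however, genuinely different from the one the paper intends: the paper gives no written proof and instead cites \cite[Theorem~16.3]{Matsumura}, i.e.\ the equivalence of \emph{regular} and \emph{quasi-regular} sequences for elements of the Jacobson radical. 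That argument runs: after the same Nakayama step, quasi-regularity of $(a_1,\dots,a_n)$ means the natural surjection $(R/I)[X_1,\dots,X_n]\to{\rm gr}_I(R)$, $X_i\mapsto \bar a_i$, is an isomorphism; since this condition depends only on $I$ together with a choice of generators inducing a basis of $I/I^2$, it holds equally for $(b_1,\dots,b_n)$, and Theorem~16.3 converts quasi-regularity back into regularity. The associated-graded criterion and the Koszul depth-sensitivity criterion are two standard incarnations of the same local rigidity phenomenon; your version has the small advantage of making the invertible change-of-basis matrix explicit (which is also the mechanism behind the permutability of regular sequences in the local case), while the quasi-regularity version avoids any discussion of chain maps on $\bigwedge^\bullet R^n$.
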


\begin{prop}\label{prop:Factor-gamma-*}  
The lifting map $\wt{\gamma}\colon \sZ_0(X, X_{\rm sing})_\Lambda \to   
\CH_1(\sX)_\Lambda$ of \corref{cor:Factor}  
factors through  $\CH^{BK}_0(X)_\Lambda$.  
\end{prop}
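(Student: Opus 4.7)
Following \cite[Theorem~5.1]{EKW}, by linearity it suffices to prove that for any generator of the subgroup $\sR_0(X) \subset \sZ_0(X, X_{\rm sing})$ defining $\CH^{BK}_0(X)$, namely an element of the form $\alpha = \nu_*(\divf(f))$ coming from a good curve $(C, Z)$ with finite morphism $\nu \colon C \to X$ and $f \in \sO_{C,Z}^\times$, one has $\wt{\gamma}(\alpha) = 0$ in $\CH_1(\sX)_\Lambda$. The standard pro-$\ell$ extension argument reduces us to the case where $k$ is infinite.

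The plan is to lift the triple $(C, \nu, f)$ to an analogous datum $(\sC, \wt{\nu}, \wt{f})$ over $S$, to apply the curve case \propref{prop:lifting-for-curves}, and then to push the resulting rational equivalence forward to $\sX$. Since $C$ is not in general embedded in $X$, I would first realize $C$ as a closed subscheme of $\P^M_X$ for some $M \gg 0$ so that $\nu$ is the restriction of the canonical projection $p \colon \P^M_X \to X$. The l.c.i. condition on $\nu$ at points of $\nu^{-1}(X_{\rm sing})$ then translates to $C$ being cut out in $\P^M_X$ by a regular sequence in a Zariski open neighborhood of each such point.

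The heart of the argument is the construction of a lift $\sC \subset \P^M_\sX$ of $C$ which is regular, projective and flat over $S$, with $\sC \cap \P^M_X = C$ scheme-theoretically near $\nu^{-1}(X_{\rm sing})$. I would produce $\sC$ by iterated application of \propref{prop:Bertini-AK} (combined with \remref{remk:Hens-*}): at each step one chooses a general hypersurface section containing the subscheme constructed so far, regular and flat over $S$, until one reaches relative dimension one. The regularity at points over $\nu^{-1}(X_{\rm sing})$ is ensured by combining the Bertini argument with \lemref{lem:regularsequences}, in that the l.c.i. hypothesis guarantees that the images of general hypersurface equations form a regular sequence in the lifted local ring. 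The argument of \cite[Lemma~2.5]{GLL} is then invoked to control the extra components in the special fiber of $\sC$ by modifying the choices with vertical divisors. Once $\sC$ is constructed, the function $f$ admits a rational lift $\wt{f}$ on $\sC$ because $\sC$ is regular. Applying \propref{prop:lifting-for-curves} to $\sC$ shows that $\divf(\wt{f})$ vanishes in $\CH_1(\sC)_\Lambda$, and pushing forward along $\sC \hookrightarrow \P^M_\sX \to \sX$, together with the compatibility of $\wt{\rho}$ with proper push-forward and \corref{cor:Factor}, yields $\wt{\gamma}(\alpha) = 0$.

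The hardest part will be the construction of $\sC$ with the prescribed regularity, flatness and intersection properties. Without the l.c.i. hypothesis on $\nu$, \lemref{lem:regularsequences} would fail to apply and $\sC$ could acquire singularities over $\nu^{-1}(X_{\rm sing})$, invalidating the appeal to \propref{prop:lifting-for-curves}. The use of \cite[Lemma~2.5]{GLL} is similarly subtle: the special fiber of $\sC$ may carry vertical components not coming from $C$, and one must verify that these contribute only cycles which are rationally equivalent to zero or vanish after tensoring with $\Lambda$, so that the resulting identity on the special fiber is exactly $\nu_*(\divf(f)) = \alpha$ modulo $m$.
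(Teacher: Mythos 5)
Your overall architecture (reduce to the regularly embedded case via $\P^M_X$, lift the pair $(C,f)$ to the model, and conclude by rational equivalence on $\sX$ plus proper push-forward) matches the paper's, but the central step is carried out with a tool that does not apply. \propref{prop:Bertini-AK} produces hypersurface sections containing a \emph{regular, integral subscheme flat over $S$} --- a horizontal relative $0$-cycle $Z$ --- and its proof rests on the estimate $e_x(Z\cap X)\le 1$. This is exactly what fails for the object you want your sections to contain: the curve $C$ is vertical (not flat over $S$) and is in general singular along $Z\supseteq \nu^{-1}(X_{\rm sing})$, so the incidence-variety count breaks down --- the bad locus now fibers over the $1$-dimensional base $C$, and at a point where $e_x(C)$ is large the ideal of $C$ may lie inside $\fm_{X,x}^2$, in which case no section through $C$ is regular there by that argument. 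No Bertini statement in the paper produces a \emph{regular} relative curve $\sC\subset\P^M_{\sX}$ whose special fiber contains a prescribed singular vertical curve, and establishing one would be a genuinely new (and delicate) result.

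The demand that $\sC$ be regular --- needed only so that you can quote \propref{prop:lifting-for-curves} --- is also what creates this difficulty, and it is unnecessary. The vanishing of $\divf_{\hat C}(\hat f)$ in $\CH_1(\sX)_\Lambda$ holds for an arbitrary rational function on an arbitrary $1$-dimensional (relative) subscheme of $\sX$: that is the definition of rational equivalence for $1$-cycles. Accordingly, the paper builds the lift $\hat C$ purely locally, following \cite[Lemma~2.5]{GLL}: it takes a regular sequence $a_1,\dots,a_{d-1}$ generating $I_{C,C_\infty}$ in the semi-local ring $\sO_{X,C_\infty}$ (this is where the l.c.i.\ hypothesis enters, via \lemref{lem:regularsequences}), lifts it term by term to $\hat a_1,\dots,\hat a_{d-1}\in\sO_{\sX,C_\infty}$ so that $V(\hat a_1,\dots,\hat a_{d-1})$ meets $X_{\rm sing}$ in at most finitely many points, and takes the closure in $\sX$. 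The resulting $\hat C$ need be neither regular nor integral; all that is used is that it is flat over $S$ (vertical components are disjoint from $C$ near $C_\infty$ and are discarded), that $\hat C\cap X$ agrees with $C$ near $C_\infty$, and that units of the semi-local ring $\sO_{\hat C,T}$ surject onto those of $\sO_{\hat C\cap X,T}$, so that $f$ lifts to an $\hat f$ restricting to $(f,1)$ --- regularity of $\hat C$ plays no role in lifting $f$, contrary to what you assert. One then gets $\wt\rho(\divf_{\hat C}(\hat f))=\divf_C(f)$ exactly (no spurious contributions from vertical components to be killed modulo $m$), and $\wt\gamma(\divf_C(f))=\alpha_{\sX}(\divf_{\hat C}(\hat f))=0$ by \corref{cor:Factor}. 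You should replace your global Bertini construction by this local lifting of the defining regular sequence.
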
  
\begin{proof}  
Since the case of relative dimension one is already shown in  
\S~\ref{sec:dim-1}, we shall assume that $d = \dim_S(\sX) \ge 2$.  
We need to show that for any good curve $\nu\colon C\to X$ in the sense of   
Definition~\ref{defn:0-cycle-S-1} and any rational function $f$ on $C$   
which is regular along $\nu^{-1}(X_{\rm sing})$,   
we have $\wt{\gamma}(\nu_*({\rm div}(f)))=0$ in $\CH_1(\sX)_\Lambda$.  
  
We will first show that this relation holds when the curve   
$C$ is regularly embedded inside $X$ (i.e., when the morphism   
$\nu$ is a regular closed embedding).   
The general case will be handled by factoring $\nu$ as a regular closed   
embedding $C\hookrightarrow \P^N_{X}$ followed by the projection   
$\P^N_X\to X$, and using the fact that the Chow groups   
$\CH_1(\sX)$ and $\CH^{BK}_0(X)$ admit proper   
push-forward for smooth morphisms.  
  
So, let $C\hookrightarrow X$ be such an embedded l.c.i. curve.    
Write $C_\infty$ for the finite set of points    
$(C\cap X_{\rm sing}) \cup \{\eta_1,\ldots, \eta_r \}$, where each   
$\eta_i$ is a    
generic point of $C$ and $C\cap X_{\rm sing}$ denotes the set of closed points   
of the intersection of $C$ with $X_{\rm sing}$.   
Let $\sO_{X, C_\infty}$ be the semi-local ring of $X$ at   
$C_{\infty}$ and let $I_{C, C_{\infty}}$ be the ideal of $C$ in   
$\sO_{X, C_{\infty}}$ so that $\sO_{C, C_\infty} =   
{\sO_{X, C_\infty}}/{I_{C, C_{\infty}}}$.  
By definition, $C$ is regularly embedded at each  point   
$x \in C\cap X_{\rm sing}$, and it is regularly embedded at the generic points.   
Hence, as a module over $\sO_{C, C_\infty}$, the conormal sheaf   
$I_{C,C_\infty}/ I_{C,C_\infty}^2$ admits a free set of generators,   
given by the image in $I_{C, C_\infty}/I_{C, C_\infty}^2$ of a regular sequence   
$a_1, \ldots, a_{d-1}$ in $\cO_{X, C_\infty}$.   
  
We shall inductively modify the sequence $a_1, \ldots, a_{d-1}$   
(without changing the induced basis of $I_{C,C_\infty}/ I_{C,C_\infty}^2$) in   
order to construct a good lifting of $C$ to the model $\sX$,   
following the recipe of \cite[Lemma 2.5]{GLL}.   
First, we note that according to Definition~\ref{defn:0-cycle-S-1},   
the curve $C$ is not contained in $X_{\rm sing}$. By a moving argument, we can   
also assume that $C$ does not contain any component of $X_{\rm sing}$.  
Indeed, the Cartier condition of $C$ implies that it will contain a component   
of $X_{\rm sing}$ only if $\dim (X_{\rm sing}) =0$.   
On the other hand, in this latter case, we can use a moving  
argument to ensure that $C$ does not hit $X_{\rm sing}$  
(see \cite[Lemma~1.3]{ESV}).  
Thus, the ideal $I_{C, C_\infty}$ of $\sO_{X, C_{\infty}}$ does not contain, and it   
is not contained in the localization of any minimal prime   
$\mathfrak{p}$ of $X_{\rm sing}$ in $\sO_{X, C_\infty}$.  
   
Up to possibly adding an element of $I_{C, C_\infty}^2$ to   
$a_1 \in I_{C, C_\infty} \subset \sO_{X, C_\infty}$, we can now choose   
$\hat{a}_1 \in  \sO_{\sX, C_\infty}$, lifting $a_1$, with the property that   
$\hat{a}_1$ does not belong to any minimal prime of   
$X_{\rm sing}$ in $\sO_{\sX, C_\infty}$.   
In other words, $V(\hat{a}_1)$ in $\Spec(\sO_{\sX, C_\infty})$ does not contain   
any irreducible component of $X_{\rm sing}$. Moreover, each irreducible  
component of $V(\hat{a}_1)$ has codimension exactly one in   
$X_{\rm sing} \times_{\sX} \Spec(\sO_{\sX, C_\infty})$ with the reduced   
induced closed subscheme structure of $X_{\rm sing}$.  
Note that thanks to Lemma~\ref{lem:regularsequences}, the modification by   
adding elements of $I_{C, C_\infty}^2$ gives another regular sequence defining   
$I_{C, C_\infty}$.  
  
We now fix $\hat{a}_1$ and $a_1$ chosen above, and proceed.  
Since locally $V(\hat{a}_1)\cap C = C$ in $\Spec(\sO_{X, C_\infty})$, the ideal   
$I_{C, C_\infty}$ is not contained in any minimal prime of   
$V(\hat{a}_1)\cap X_{\rm sing}$.   
Thus, we can alter $a_2$ by an element of $I_{C, C_\infty}^2$ so that we can   
assume that $a_2$ in particular is not in any  minimal prime of   
$V(\hat{a}_1)\cap X_{\rm sing}$.  
We now lift $a_2$ to $\hat{a}_2\in \sO_{\sX, C_\infty}$ and look at   
$V(\hat{a}_1, \hat{a}_2)$ in $\Spec(\sO_{\sX, C_\infty})$.  
As before, it follows by our construction that each irreducible  
component of $V(\hat{a}_1, \hat{a}_2)$ has codimension   
exactly one in $X_{\rm sing}\cap V(\hat{a}_1)$.  
We fix this $\hat{a}_2$ and the corresponding $a_2$.   
Again, $a_1,a_2, \ldots, a_{d-1}$ (with $a_2$ accordingly modified) form a  
regular sequence generating $I_{C, C_\infty}$, thanks to   
Lemma~\ref{lem:regularsequences}.  
  
In general, the choice of $\hat{a}_i$ depends on the previously chosen   
$\hat{a}_1,\ldots, \hat{a}_{i-1}$.  
It is chosen with the property that $\hat{a_i}$ is a unit at each   
generic point of $X_{\rm sing}\cap V(\hat{a}_1, \ldots, \hat{a}_{i-1})$, and   
that $\hat{a}_i$ lifts $a_i\in I_{C, C_\infty}$.  
This can be achieved, up to elements of $I_{C, C_\infty}^2$, since locally   
$V(\hat{a}_1, \ldots, \hat{a}_{i-1}) \cap C = C \not \supset    
V(\hat{a}_1, \ldots, \hat{a}_{i-1}) \cap X_{\rm sing}$.  
  
At the end of the process, we get   
$\hat{a}_1,\ldots, \hat{a}_{d-1} \in \sO_{\sX, C_\infty}$ with the following   
properties.  
\begin{enumerate}  
\item  
The sequence $\{\hat{a}_1,\ldots, \hat{a}_{d-1}\}$ restricts to a regular   
sequence $\{{a}_1,\ldots, {a}_{d-1}\}$ generating the ideal   
$I_{C, C_\infty}$ in   
$\sO_{X, C_\infty}$.   
The images of $\{{a}_1,\ldots, {a}_{d-1}\}$ in $I_{C, C_\infty}/ I_{C, C_\infty}^2$   
are the basis we started from.   
  
\item   
Let $V(\hat{a}_1,\ldots, \hat{a}_{d-1}) \subset \Spec(\sO_{\sX, C_\infty})$ be   
the closed subscheme of $\Spec(\sO_{\sX, C_\infty})$ defined by the ideal   
$(\hat{a}_1,\ldots, \hat{a}_{d-1})$. Then $V(\hat{a}_1,\ldots, \hat{a}_{d-1})$   
intersects $X_{\rm sing}$ in at most finitely many points   
(the intersection could be empty).   
  
\item   
Let $\hat{C}$ be the closure of  $V(\hat{a}_1,\ldots, \hat{a}_{d-1}) $ in $\sX$.   
Then $\hat{C}$ is flat over $S$ and there exists an open neighborhood $U$ of   
$C_\infty$ in $X$ such that $(\hat{C}\cap X)\cap U$  and $C\cap U$ coincide   
scheme-theoretically.  
In particular, if $T$ denotes the (finite) set of closed points of   
$\hat{C}\cap X_{\rm sing}$ together with the generic points of   
$\hat{C}\cap X$, then we have an isomorphism   
$\sO_{\hat{C}\cap X, T}\cong \cO_{C, C_\infty}\times R$, with $R$ an   
$1$-dimensional semi-local ring.   
\end{enumerate}  
  
Property (2) follows from the fact that, at each step, the generic points of   
$V(\hat{a}_1,\ldots, \hat{a}_{i})$ have height exactly one at each generic   
point of $X_{\rm sing}\cap V(\hat{a}_1, \ldots, \hat{a}_{i-1})$.  
Property (3) is clearly a consequence of (1) and of the construction.   
It tells us in particular that we can harmlessly throw  away any component of   
$\hat{C}$ which happens to be completely vertical (i.e., the   
structure map to $S$ factors through the closed point). This is  
because such a component has to be disjoint from $C$ in a neighborhood of   
$C_\infty$. Note that $\hat{C}$ can be taken with the reduced scheme structure,   
but it may not be integral even if $C$ is.  
  
It follows from (3) that the map on units   
$\cO_{\hat{C}, T}^\times \to \cO_{\hat{C}\cap X, T}^\times \times R$ is surjective.   
We can therefore find an element $\hat{f}$ in the ring of total quotients of   
$\hat{C}$ (which is by (3) a product of fields) which is a regular and   
invertible function in a neighborhood of $T$ and which restricts to   
$(f,1)$ (where $f$ was the given function on $C$).  
In particular, this implies that   
$\wt{\rho} ({\rm div}_{\hat{C}}(\hat{f}) )= {\rm div}_{C}(f)$.  
By ${\rm div}_{\hat{C}}(\hat{f})$, here we mean the sum of the divisors on the   
irreducible components of $\hat{C}$ if $\hat{C}$ is not integral.  
Note that ${\rm div}_{\hat{C}}(\hat{f})$ is an element of   
$\sZ_1^g(\sX)$ and that, we have  
${\rm div}_{\hat{C}}(\hat{f}) = \wt{\gamma}({\rm div}_C(f))$ by construction.   
Since we clearly have ${\rm div}_{\hat{C}}(\hat{f})=0$ in $\CH_1(\sX)_\Lambda$,   
this completes the proof of the proposition when $\nu: C \inj X$ is  
a regular closed immersion.  
  
We now prove the general case. So suppose we are given a good curve  
$\nu: C \to X$ and a rational function $f$ on $C$ as in the  
beginning of the proof of the proposition.  
By \cite[Lemma~3.5]{BK}, we can assume that the map $\nu: C \to X$  
is a complete intersection morphism.  
Now, we can find a commutative diagram  
\begin{equation}\label{eqn:Factor-gamma-*-0}   
\xymatrix@C.8pc{  
C \ar@{^{(}->}[r]^-{\nu'} \ar[dr]_-{\nu} & \P^M_X \ar[d]_-{q} \ar[r] &  
\P^M_{\sX} \ar[d]^-{q} \ar[dr] \\  
& X \ar[r] & \sX \ar[r] & S}  
\end{equation}  
for some $M \gg0 $ such that $\nu'$ is a regular closed embedding.  
Letting $\sY = \P^M_{\sX}$ and $Y = \P^M_X$, this gives a diagram  
  
\begin{equation}\label{eqn:Factor-gamma-*-1}   
\xymatrix@C.8pc{  
\sZ_0(Y, Y_{\rm sing})_{\Lambda} \ar[r]^-{{\wt{\gamma}}_Y} \ar[d]_-{q_*} &  
\CH_1(\sY)_{\Lambda} \ar[d]^-{q_*} \\  
\sZ_0(X, X_{\rm sing})_{\Lambda} \ar[r]^-{{\wt{\gamma}}_X} &   
\CH_1(\sX)_{\Lambda}.}  
\end{equation}  
  
Note that the push-forward map $q_*$ on the left is defined since  
$q$ is smooth (see \cite[Proposition~3.18]{BK}).  
It is easily seen from the construction of the cycle $Z_x$ associated  
to a regular closed point of $X$ in \propref{prop:rho-is-onto}  
that ~\eqref{eqn:Factor-gamma-*-1} commutes.  
We thus get   
\[  
\wt{\gamma}_X \circ \nu_*(\divf(f)) = \wt{\gamma}_X \circ q_*(\divf_C(f))  
= q_* \circ \wt{\gamma}_Y(\divf_C(f)) = 0.  
\]  
This finishes the proof of the proposition.  
\end{proof}

{\bf Proof of \thmref{thm:Main-1}:}  
The construction of $\wt{\rho}$ is given in   
~\eqref{eq:restriction-map-generators}. The existence of   
the map $\gamma$ such that ~\eqref{eqn:Main-1-0} commutes,   
follows directly from \corref{cor:Factor} and \propref{prop:Factor-gamma-*},  
using the fact that the surjection $\sZ_0(X, X_{\rm sing}) \surj \CH^{BK}_0(X)$  
factors as $\sZ_0(X, X_{\rm sing}) \surj \CH^{LW}_0(X) \surj \CH^{BK}_0(X)$.  
The surjectivity of $\gamma$ follows from \propref{prop:rho-is-onto}.  
$\hfill\square$

\subsection{}  
In the above notations, we have constructed a surjective group homomorphism   
\[  
\gamma \colon \CH^{LW}_0(X)_{\Lambda} \surj \CH_1(\sX)_\Lambda,  
\]  
which is (by construction) an inverse on the level of generators of   
the naive restriction map   
\[  
\wt{\rho} \colon \sZ_1^g(\sX)\to  \sZ_0(X, X_{\rm sing})  
\]  
for any regular projective and flat scheme $\sX$ over $S$  
without any assumption on the residue field  (apart from it being perfect).  
This also does not depend on the geometry of the reduced special fiber $X$.  
In particular, we can summarize what we have shown as follows.  
  
\begin{cor}\label{cor:sum}  
Let $A$ be a Henselian discrete valuation ring  with perfect residue field.  
Let $\sX$ be a regular scheme which is projective and flat over $A$ with  
reduced special fiber $X$. Suppose that the map  
$\wt{\rho} \colon \sZ_1^g(\sX)\to  \sZ_0(X, X_{\rm sing})$ descends to  
a morphism between the Chow groups  
\begin{equation}\label{eq:rho}  
\rho \colon \CH_1(\sX)_\Lambda \to \CH^{LW}_0(X)_{\Lambda}.  
\end{equation}  
Then $\rho$ is an isomorphism.  
\end{cor}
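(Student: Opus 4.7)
The plan is to exploit the fact that $\gamma$ has already been constructed (as a surjection) in Theorem~\ref{thm:Main-1}, and to show that under the hypothesis of the corollary, the pair $(\rho, \gamma)$ must be mutually inverse. Since $\gamma$ is surjective, it will suffice to prove that $\rho \circ \gamma = \id$ on $\CH^{LW}_0(X)_{\Lambda}$; this forces $\gamma$ to be injective, hence an isomorphism, and then $\rho = \gamma^{-1}$.

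To verify $\rho \circ \gamma = \id$, I would test on generators. Let $x \in X_{\rm reg}$ be a regular closed point and let $Z_x$ denote the integral, regular, finite and flat lift over $S$ produced by \propref{prop:rho-is-onto}, which satisfies $Z_x \times_S X = \{x\}$ scheme-theoretically. By the construction recalled in \corref{cor:Factor} and \propref{prop:Factor-gamma-*}, the map $\wt{\gamma}$ sends $[x] \in \sZ_0(X, X_{\rm sing})_{\Lambda}$ to the class of $[Z_x]$ in $\CH_1(\sX)_{\Lambda}$. Then, by the assumption that $\wt{\rho}$ descends to $\rho$, the commutativity of
\[
\begin{tikzcd}
\sZ_1^g(\sX)_{\Lambda} \arrow[r,"\wt{\rho}"] \arrow[d,twoheadrightarrow] & \sZ_0(X, X_{\rm sing})_{\Lambda} \arrow[d,twoheadrightarrow] \\
\CH_1(\sX)_{\Lambda} \arrow[r,"\rho"] & \CH^{LW}_0(X)_{\Lambda}
\end{tikzcd}
\]
gives $\rho([Z_x]) = [\wt{\rho}(Z_x)] = [x]$ in $\CH^{LW}_0(X)_{\Lambda}$, since $Z_x \cap X = \{x\}$. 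Thus $\rho \circ \gamma([x]) = [x]$ for every regular closed point, and since these classes generate $\CH^{LW}_0(X)_{\Lambda}$, we conclude $\rho \circ \gamma = \id$.

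Combined with the surjectivity of $\gamma$ established in \thmref{thm:Main-1}, this proves that $\gamma$ is bijective and that $\rho$ is its inverse, establishing the first assertion. For the second assertion, under the additional semi-stable reduction hypothesis, the map $\gamma_{EKW}\colon H^{2d}(X, \Lambda(d)) \to \CH_1(\sX)_{\Lambda}$ from \cite{EKW} and the motivic cycle class map $\cyc^{\mathcal{M}}_X\colon \CH^{LW}_0(X)_{\Lambda}\to H^{2d}(X, \Lambda(d))$ from \cite{BK-1} fit into the commutative triangle of surjections displayed just after \thmref{thm:Main-1}; since the composite $\gamma_{EKW}\circ \cyc^{\mathcal{M}}_X = \gamma$ is now known to be an isomorphism, both $\cyc^{\mathcal{M}}_X$ and $\gamma_{EKW}$ must be isomorphisms as well, and the diagonal map in the corollary is just $\gamma_{EKW}^{-1} = \cyc^{\mathcal{M}}_X \circ \rho$, which agrees with the restriction map of \cite[Theorem 1.1]{EKW}. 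No real obstacle arises here: the argument is essentially formal once one notes that $\wt{\gamma}$ was built precisely so that, on generators, it is a section of $\wt{\rho}$; the only content is ensuring that the hypothesis that $\rho$ exists is used in exactly the right place, namely to pass from the cycle-level equality $\wt{\rho}([Z_x]) = [x]$ to an equality in $\CH^{LW}_0(X)_{\Lambda}$.
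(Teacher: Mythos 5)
Your argument is correct and is exactly the one the paper intends: Theorem~\ref{thm:Main-1} provides the surjection $\gamma$ which is by construction a section of $\wt{\rho}$ on generators (since $\wt{\gamma}([x])=[Z_x]$ and $Z_x\times_S X=\{x\}$), so once $\rho$ exists one gets $\rho\circ\gamma=\id$, forcing $\gamma$ to be bijective with inverse $\rho$. Your final paragraph about the semi-stable case addresses the extra assertion of Corollary~\ref{cor:sum**} rather than the statement at hand, but it is also sound.
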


\section{The restriction isomorphism}\label{sec:Res-iso}  
We shall prove \thmref{thm:Main-2} in this section.  
In other words, we shall show that   
the restriction homomorphism $\rho$ of \eqref{eq:rho} does exist if additional assumption on the field $k$ or on the DVR $A$ hold. 
  
%is algebraically closed. We shall prove this by constructing a cycle class map  
%from the Levine-Weibel Chow group to {\'e}tale cohomology for singular schemes.  
%  
%After that, we will discuss some results over more general fields, by relating the existence of the restriction homomorphism $\rho$ to the validity of the Bloch-Quillen formula for zero cycles on singular varieties.  

\subsection{The \'etale cycle class map}\label{ssec:et-cycle-class}  
The cycle class map from the Chow groups to the {\'e}tale cohomology is  
well known for smooth schemes. More generally, the {\'e}tale realization  
of Voevodsky's motives tells us that there are such maps from the  
Friedlander-Voevodsky motivic cohomology of singular schemes to  
their {\'e}tale cohomology. But this is not good enough for us since we  
do not work with the motivic cohomology.  
In this section, we give a construction of the cycle class map from the  
Levine-Weibel Chow group of a singular scheme to its {\'e}tale  
cohomology using Gabber's Gysin maps \cite{Fujiwara}.

We let $k$ be a perfect field and let $X$ be an  
equi-dimensional quasi-projective scheme of dimension $d$ over $k$.   
Let $m$ be an integer prime to the exponential characteristic of $k$  
and let $\Lambda = {\Z}/{m\Z}$.  
  
Let $x\in X_{\rm reg}$ be a regular closed point of $X$.   
We have the sequence of maps   
\begin{equation}\label{eq:def-et-cycle-class}  
\Z \surj \Lambda \xrightarrow{\cong} H^0_{\et}(k(x), \Lambda)   
\stackrel{(1)}{\underset{\cong}\to}   
H^{2d}_{\{x\}, \et}(X, \Lambda(d)) \xrightarrow{(2)}   
H^{2d}_{\et}(X, \Lambda(d)).  
\end{equation}  
  
The arrow labeled $(1)$ is the Gysin map \cite{Gabber},   
using the fact that $x$ is a regular   
closed point of $X$. This is an  
isomorphism by the purity and excision theorems in {\'e}tale cohomology.   
The arrow labeled $(2)$ is the natural `forget support' map.   
Let $\delta_x$ denote the composite of all maps in   
~\eqref{eq:def-et-cycle-class}.  
We let $cyc^{\et}_X(x) = \delta_x(1)$ and  
extend it linearly to define a  group homomorphism  
\begin{equation}\label{eq:def-et-cycle-class-0}  
cyc^{\et}_X\colon \sZ_0(X, X_{\rm sing}) \to    
H^{2d}_{\et}(X, \Lambda(d)).  
\end{equation}  
  
We shall now show that this map factors through the modified Chow group  
$\CH^{BK}_0(X)$. It will then follow that it factors through $\CH^{LW}_0(X)$  
as well. So let $\nu \colon (C,Z)\to X$ be a good curve as  
in Definition~\ref{defn:0-cycle-S-1}. As in the proof of   
\propref{prop:Factor-gamma-*}, we can assume that $\nu$ is a  
local complete intersection morphism. In this case,   
Gabber's construction of push-forward map in {\'e}tale cohomology   
\cite{Fujiwara} gives us a push-forward map  
\[  
\nu_*\colon H^2_{\et}(C, \Lambda(1))\to H^{2d}_{\et}(X, \Lambda(d))  
\]  
and a diagram  
\begin{equation}\label{eq:commm-diag-push-etale}   
\xymatrix@C.8pc{   
\sZ_0(C,Z) \ar[r]^-{\nu_*} \ar[d]_-{cyc^{\et}_C} &   
\sZ_0(X, X_{\rm sing}) \ar[d]^-{cyc^{\et}_X} \\  
H^2_{\et}(C, \Lambda(1)) \ar[r]^-{\nu_*} & H^{2d}_{\et}(X, \Lambda(d)).}  
\end{equation}  
  
If $x \in C \setminus Z$ is a closed point so that $\nu(x) \in X_{\rm reg}$,  
the functoriality of the Gysin maps implies that  
the composite $H^0_{\et}(k(x), \Lambda(0)) \to H^2_{\et}(C, \Lambda(1))   
\xrightarrow{\nu_*} H^{2d}_{\et}(X, \Lambda(d))$ is the push-forward map  
associated to the finite complete intersection  
map $\Spec(k(x)) \to X$. Using this fact and  
the description   
\eqref{eq:def-et-cycle-class} of the cycle class map on generators,  
it follows that ~\eqref{eq:commm-diag-push-etale} is commutative.  
  
We can identify   
$\CH_0(C,Z)$ with $\Pic(C)$ according to \cite[Lemma 4.12]{BK}.  
The Kummer sequence then shows that  
there is a commutative diagram  
\[  
\xymatrix@C.8pc{  
\sZ_0(C,Z) \ar[rr]\ar[rd]_-{cyc^{\et}_C} &&   
\Pic(C) \ar[ld]\\  
& H^2_{\et}(C, \Lambda(1)).& }  
\]  
  
This immediately shows that for any rational function $f$ on $k(C)$ such that   
${\rm div}_C(f)\in \sR_0(C,Z)$, we have $cyc^{\et}_C({\rm div}_C(f))=0$ in   
$H^2_{\et}(C, \Lambda(1))$. But then, the commutativity of   
\eqref{eq:commm-diag-push-etale} proves that $\nu_*( {\rm div}_C(f))$ goes to   
zero in $H^{2d}_{\et}(X, \Lambda(d))$.   
We have therefore shown that the map $cyc^{\et}_X$ in  
~\eqref{eq:def-et-cycle-class-0} descends to a cycle class map on the  
Chow group:  
\begin{equation}\label{eq:def-et-cycle-class-1}  
cyc^{\et}_X: \CH^{BK}_0(X) \to H^{2d}_{\et}(X, \Lambda(d)).  
\end{equation}  
  
We shall denote its composite with the canonical surjection   
$\CH^{LW}_0(X) \surj \CH^{BK}_0(X)$ also by $cyc^{\et}_X$.

\subsection{The case of algebraically closed fields}\label{sec:Alg-closed}  
In the notations of \S~\ref{ssec:et-cycle-class}, suppose moreover that $k$ is   
separably (hence algebraically) closed and $X$ is projective over $k$.  
Write $X = \cup_{i=1}^n X_i$, where the $X_i$'s are the the irreducible   
components of $X$. In this case, we have a natural `trace' map   
\begin{equation}\label{eqn:trace}  
\tau_X \colon H^{2d}_{\et}(X, \Lambda(d)) \xrightarrow{\cong}  
\oplus_{i=1}^n H^{2d}_{\et}(X_i, \Lambda(d)) \xrightarrow{\cong}   
\oplus_{i=1}^n \Lambda.   
\end{equation}  
  
It follows by combining the exact sequence  
\[  
H^{2d-1}_{\et}(X_{\rm sing}, \Lambda(d)) \to  
H^{2d}_{c, \et}(X_{\rm reg}, \Lambda(d)) \to   
H^{2d}_{\et}(X, \Lambda(d)) \to H^{2d}_{\et}(X_{\rm sing}, \Lambda(d)),  
\]  
\cite[Chapter~VI, Lemma~11.3]{Milne} and the cohomological dimension  
bound $cd_{\Lambda}(X_{\rm sing}) \le 2d-2$ (as $k$ is separably closed)  
that the map $\tau_X$ in ~\eqref{eqn:trace} is an isomorphism.  
  
Note further that for any regular closed   
point $x\in X_{\rm reg}$, the composition  
\[  
\Lambda \xrightarrow{\cong}  H^0(k(x), \Lambda)   
\to H^{2d}_{\et}(X, \Lambda(d)) \xrightarrow{\cong}  
\oplus_{i=1}^n H^{2d}_{\et}(X_i, \Lambda(d)) \xrightarrow{\tau}   
\oplus_{i=1}^n \Lambda   
\]  
sends $1\in \Lambda$ to the element $1$ in the   
direct summand of $\oplus_{i=1}^n \Lambda$   
associated to the unique component of $X$ containing $x$ and to zero  
in all other summands.

Recall now from \cite[\S~1]{ESV} that there is a degree map    
${\rm deg}\colon \CH^{BK}_0(X)_{\Lambda} \to \bigoplus_{i=1}^n \Lambda$.  
This is considered in \textit{loc. cit.}   
for the Levine-Weibel Chow group, but the discussion there easily  
shows that it actually factors through the quotient   
$\CH^{BK}_0(X)$. This map is given by the sum of the degree maps for  
0-cycles on the irreducible components of $X$.   
In particular, for any regular closed point $x \in X$, the degree of $x$ is the   
element $1$ in the direct summand of $\oplus_{i=1}^n \Lambda$   
associated to the unique component of $X$ containing $x$ and is zero  
in all other summands.

Combining these two facts, we have a commutative diagram    
\begin{equation}\label{eq:diag-et-degree}  
\xymatrix@C.8pc{  
\CH^{BK}_0(X)_{\Lambda} \ar[rr]^-{\deg} \ar[rd]_-{cyc_{X}^{\et}} & &   
\oplus_{i=1}^n \Lambda  \\  
& H^{2d}_{\et}(X, \Lambda(d)). \ar[ru]_-{\tau_X}^-{\cong} & }  
\end{equation}  
  
In this setting, we have   
\begin{lem}\label{lem:et-iso}  
The degree map induces an isomorphism   
$\CH^{LW}_0(X)_\Lambda \xrightarrow{\cong} \oplus_{i=1}^n \Lambda$.   
In particular,  the {\'e}tale cycle class map   
$cyc_{X}^\et: \CH^{LW}_0(X)_\Lambda \to H^{2d}_{\et}(X, \Lambda(d))$   
is an isomorphism.  
\end{lem}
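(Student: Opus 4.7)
The plan is to prove the first assertion---that the degree map $\deg\colon \CH^{LW}_0(X)_\Lambda \to \bigoplus_{i=1}^n \Lambda$ is an isomorphism---from which the second assertion follows immediately from the commutative triangle \eqref{eq:diag-et-degree}, since $\tau_X$ is already known to be an isomorphism. Surjectivity is clear: because $k$ is algebraically closed and $X_i \cap X_{\rm reg}$ is open dense in each component $X_i$, I pick a regular closed point $x_i \in X_i \cap X_{\rm reg}$ and note that $[x_i]$ maps to the $i$-th standard basis vector of $\bigoplus_i \Lambda$.

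For injectivity, the central claim is: for any two regular closed points $p, q \in X_i \cap X_{\rm reg}$ on the same irreducible component, $[p]=[q]$ in $\CH^{LW}_0(X)_\Lambda$. Granting this, any $\alpha = \sum_j n_j [p_j]$ with $\deg\alpha \in m\bigoplus_i\Z$ becomes equivalent, modulo $\sR^{LW}_0(X) + m\sZ_0(X,X_{\rm sing})$, to $\sum_i \big(\sum_{p_j \in X_i} n_j \big)[x_i]$, which vanishes in $\CH^{LW}_0(X)_\Lambda$. To prove the central claim, I would construct a connected reduced projective curve $C$ together with a closed immersion $\nu\colon C \hookrightarrow X$ satisfying $\nu(C)\subseteq X_i$, $\nu(C) \cap X_{\rm sing} = \emptyset$, and such that $p, q$ appear as smooth points of $C$ contained in $\nu(C)$. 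Such a $C$ is automatically a Cartier curve in the sense of \defref{defn:0-cycle-S-1}, because the regular-sequence condition is vacuous on the empty set $C\cap X_{\rm sing}$. The existence of $C$ would follow by iteratively intersecting $X_i \subset \P^N_k$ with general hypersurface sections passing through $\{p,q\}$, using the equicharacteristic specialization (with $S=\Spec k$) of \propref{prop:Bertini-AK}, combined with the classical Bertini theorems for connectedness and smoothness at prescribed points.

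On such a connected reduced projective curve $C$, we have the canonical isomorphism $\CH^{LW}_0(C) \cong \Pic(C)$ recalled in \secref{sec:dim-1}. Over the algebraically closed field $k$, the degree-zero Picard group $\Pic^0(C)(k)$ is the group of $k$-points of a connected commutative algebraic group whose unipotent, toric, and abelian constituents are all $m$-divisible (since $m$ is prime to $\Char k$); hence $\Pic^0(C)(k)$ is divisible and the degree induces an isomorphism $\Pic(C)_\Lambda \xrightarrow{\cong} \Lambda$. Consequently $[p]-[q]=0$ in $\CH^{LW}_0(C)_\Lambda$, meaning that $[p]-[q] \in m\sZ_0(C) + \sR^{LW}_0(C)$, and pushing forward along the closed immersion $\nu$ yields the desired equality $[p]-[q]=0$ in $\CH^{LW}_0(X)_\Lambda$. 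The main obstacle in this argument is the Bertini construction of $C$, where one must simultaneously arrange that $C$ is a closed-embedded reduced connected curve passing through two prescribed regular points of $X_i$, smooth at those points, and disjoint from $X_{\rm sing}$; this is precisely the kind of statement afforded by the techniques developed in Section \ref{sec:Bertini}.
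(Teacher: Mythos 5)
Your overall strategy coincides with the paper's: both reduce injectivity of the degree map to the $m$-divisibility of $\Pic^0$ of a reduced projective Cartier curve, which is elementary over an algebraically closed field with $m$ prime to the characteristic, and both treat the second assertion as a formal consequence of \eqref{eq:diag-et-degree}. However, your execution of the Bertini step has a genuine gap. You require a \emph{projective} connected curve $C$ with $\nu(C)\subseteq X_i$ and $\nu(C)\cap X_{\rm sing}=\emptyset$. Such a curve does not exist in general: if $X_{\rm sing}\cap X_i$ has codimension one in $X_i$ --- the typical situation for a reducible special fiber, e.g.\ $X=X_1\cup X_2$ two planes in $\P^3_k$ meeting along a line $L=X_{\rm sing}$ --- then every projective curve in $X_i\cong \P^2_k$ meets $L$. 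More generally, a complete intersection curve $X_i\cap H_1\cap\dots\cap H_{d-1}$ necessarily meets every closed subset of $X_i$ of dimension $\geq d-1$, so the avoidance you ask for is impossible exactly when $X_{\rm sing}$ is a divisor in $X_i$. The shortcut of rendering the Cartier condition vacuous therefore fails, and note also that a curve contained in a single component $X_i$ cannot be cut out by a regular sequence in $\sO_{X,x}$ at a point $x$ where several components of $X$ meet, so restricting to one component is itself problematic.

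The repair is to drop the avoidance requirement and work with genuine Cartier curves, as the paper does: given a cycle $\alpha$ of degree zero supported on $X_{\rm reg}$, one finds (by Bertini/Altman--Kleiman over $k$, cf.\ \cite[Lemma~1.3]{ESV}) a reduced connected Cartier curve $C\subset X$ --- in general a full hypersurface section $X\cap H_1\cap\dots\cap H_{d-1}$, with components in every $X_j$ --- containing the support of $\alpha$, regular along that support, and regularly embedded at the finitely many points of $C\cap X_{\rm sing}$. The class of $\alpha$ then lies in the image of $\Pic^0(C)\to \CH^{LW}_0(X)$ (only total degree zero on the connected curve $C$ is needed, which your reduction to differences $[p]-[q]$ supplies), and $\Pic^0(C)$ is $m$-divisible. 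The remaining ingredients of your argument --- surjectivity of the degree map, the reduction of a general cycle to differences of regular points on the same component, and the divisibility of $\Pic^0(C)$ via the structure of the Picard scheme --- are correct.
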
  
\begin{proof}  
The second statement is a consequence of the first by   
\eqref{eq:diag-et-degree}.  
Since the degree map is clearly surjective (as $k$ is algebraically closed),   
it is enough to prove its injectivity.  
Since we are working with $\Z/m$-coefficients with $m\in k^\times$,   
it is in fact enough to prove that the subgroup $\CH_0(X)_{\rm deg =0}$ of   
0-cycles of degree zero is $m$-divisible.   
But this well known as $k$ is algebraically closed. Indeed, given any  
0-cycle $\alpha \in \sZ_0(X, X_{\rm sing})$ of degree zero, we can find  
a reduced Cartier curve $C \subset X$ which is regular along the support  
of $\alpha$. This implies that $\alpha$ lies in the image of the  
push-forward map $\Pic^0(C) \to \CH^{LW}_0(X)$. It is therefore enough to know  
that $\Pic^0(C)$ is $m$-divisible. But this is elementary.  
\end{proof}  
It is a straightforward exercise to deduce from the previous Lemma the isomorphism discussed in Remark \ref{rem:algclosed}.

\subsection{Results over non-algebraically closed fields}  
In this final section,  we suppose that the Gersten conjecture for Milnor $K$-theory holds for schemes over $A$.   
Thanks to \cite{Kerz09}, this is the case if $k\subset A$, i.e.,~if $A$ is an equicharacteristic DVR.   
  
Let $\Lambda = \Z/m\Z$, with $m$ prime to $p$, and let $n\geq 0$ be a non-negative integer.  
Recall (see e.g., \cite[8.2]{EKW}), that the $n$-th Milnor $K$-theory sheaf $\sK_{n, \Lambda}^M$ with $\Lambda$-coefficients is defined as the (Zariski or Nisnevich) sheafification of the presheaf on affine schemes sending an $A$-algebra $R$ to the quotient of   
$\Lambda\otimes_\Z T_n(R)$ by the two-sided ideal generated by elements of the form $a\otimes (1-a)$ with $a, 1-a\in R^\times$. Here $T_n(R)$ is the $n$-th tensor algebra of $R^\times$ (over $\Z$). Since in what follows we will only consider $\Lambda$-coefficients (unless explicitly mentioned), we drop it from the notation. Write $\sK_{n, Y}^M$ for the restriction of $\sK_{n, \Lambda}^M$ to the small (Zariski or Nisnevich) site of $Y$ for any $A$-scheme $Y$. If the residue field of $A$ is finite, we denote by the same symbol the sheaf of improved Milnor $K$-theory, with $\Lambda$ coefficients, in the sense of Kerz \cite{KerzImprovedMilnor}.

Let $\sX$ be again a regular scheme   
which is projective and flat over $A$, of relative dimension $d\geq 0$.   
One of the consequences of the Gersten conjecture is the so called Bloch formula, relating Milnor K-theory with the Chow groups. In particular, there is a canonical isomorphism  
\[ cyc^{M}_{\sX}\colon \CH_1(\sX)_\Lambda \xrightarrow{\simeq}  H^d(\sX_{\rm Nis}, \sK^M_{d, \sX})\]  
which is induced by the tautological ``cycle class map''  
\[cyc_{\sX}^M \colon \sZ_1(\sX)  =  \bigoplus_{x\in \sX_{(1)}} \Z \cong \bigoplus_{x\in \sX_{(1)}} K_0^M(k(x)),     \]  
where the right hand side appears as the last term of the Gersten resolution for $\sK_{d, \sX}^M$.  	   
  
Let now $X$ denote as before the reduced special fiber of $\sX$, and let $x\in X_{\rm reg}$ be a regular  closed point of $X$.   
We have a sequence of maps  
\[ \Lambda \cong K_0^M(k(x))\otimes_{\Z }\Lambda  \stackrel{(1)}{\underset{\cong}\to} H^d_{ \{x\}}(X_{\rm Zar}, \sK^M_{d, X}) \xrightarrow{(2)} H^d(X_{\rm Zar}, \sK^M_{d, X}) \xrightarrow{(3)} H^d(X_{\rm Nis}, \sK^M_{d, X}) \]  
where the isomorphism (1) follows from Kato's computation \cite[Theorem 2]{Kato} (using again the regularity of the point $x$), the map (2) is the canonical forget support map, and the map (3) is the change of topology from Zariski to the Nisnevich site.   
  
Extending this map linearly, we get a cycle class map  
\[cyc^M_{X}\colon \sZ_0(X, X_{\rm sing})\to H^d(X_{\rm Nis}, \sK^M_{d, X}).\]  
  
Recall now the following result from \cite{KG}  
\begin{thm}[Theorem 4.1, \cite{KG}] The cycle class map $cyc^M_{X}$ induces a surjective homomorphism  
	\[ cyc_{X}^M\colon \CH_0^{LW}(X) \twoheadrightarrow H^d(X_{\rm Nis}, \sK^M_{d, X}).\]  
	\end{thm}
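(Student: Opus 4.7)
The plan is to mirror, in the Milnor $K$-theoretic setting, the construction of the etale cycle class map given in Section~\ref{ssec:et-cycle-class}. Two assertions must be established: that $cyc^M_X$ descends from $\sZ_0(X, X_{\rm sing})$ to $\CH_0^{LW}(X)$, and that the induced map is surjective onto $H^d(X_{\rm Nis}, \sK^M_{d,X})$.

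For the factorization, let $\nu\colon C\hookrightarrow X$ be a Cartier curve relative to $X_{\rm sing}$ and $f\in \sO_{C, C\cap X_{\rm sing}}^{\times}$. The Cartier hypothesis makes $\nu$ a regular closed immersion, hence an l.c.i.\ morphism of codimension $d-1$. Exactly as in diagram~\eqref{eq:commm-diag-push-etale}, one constructs a Gysin pushforward
\[
\nu_{*}^M\colon H^1(C_{\rm Nis}, \sK^M_{1,C}) \to H^d(X_{\rm Nis}, \sK^M_{d,X}),
\]
either through Rost-style transfers in Milnor $K$-cohomology combined with deformation to the normal cone, or by factoring $\nu$ as a regular embedding into $\P^N_X$ followed by the smooth projection, as already used in the proof of \propref{prop:Factor-gamma-*}. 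Functoriality of Gysin maps ensures compatibility with $cyc^M_C$ and $cyc^M_X$ on generators. Under the identification $\sK^M_{1,C}=\sO_C^{\times}$, one has $H^1(C_{\rm Nis}, \sK^M_{1,C})\cong \Pic(C)_\Lambda$, so $cyc^M_C({\rm div}_C(f))$ becomes the class of a principal divisor and vanishes. Pushing forward kills $cyc^M_X(\nu_*({\rm div}_C(f)))$, proving that $cyc^M_X$ descends to $\CH_0^{LW}(X)$ (and, by running the same argument for arbitrary good curves as in \propref{prop:Factor-gamma-*}, further to $\CH_0^{BK}(X)$).

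For surjectivity, one exploits the localization sequence associated to the decomposition $X = X_{\rm reg}\sqcup X_{\rm sing}$:
\[
H^d_{X_{\rm sing}}(X, \sK^M_{d,X}) \to H^d(X, \sK^M_{d,X}) \to H^d(X_{\rm reg}, \sK^M_d) \to H^{d+1}_{X_{\rm sing}}(X, \sK^M_{d,X}).
\]
On the smooth part, Kerz's Gersten conjecture~\cite{Kerz09} yields the Bloch formula $H^d(X_{\rm reg}, \sK^M_d)\cong \CH_0(X_{\rm reg})_\Lambda$, for which the cycle class map on $X_{\rm reg}$ is tautologically the surjection from $\sZ_0(X_{\rm reg})_\Lambda$. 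A vanishing of $H^{d+1}_{X_{\rm sing}}(X, \sK^M_{d,X})$, coming from the dimension bound $\dim(X_{\rm sing})<d$ and the cohomological vanishing of $\sK^M_{d,X}$ above the dimension in the Nisnevich topology, makes the restriction map a surjection. Classes with support on $X_{\rm sing}$ are then handled by a moving argument based on \cite[Lemma~1.3]{ESV}: every closed point of $X_{\rm sing}$ is rationally equivalent, in the sense of the Cousin complex, to a cycle supported in $X_{\rm reg}$ via the divisor of a suitable rational function on a Cartier curve passing through a regular point. Combining these inputs, every class in $H^d(X_{\rm Nis}, \sK^M_{d,X})$ lies in the image of $cyc^M_X$.

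The principal obstacle is the construction of the Milnor $K$-theoretic Gysin map $\nu_{*}^M$ for an l.c.i.\ Cartier curve on a singular $X$, together with the verification that it sends the class of a regular point on $C$ to the one built via~\eqref{eq:def-et-cycle-class}. In the etale setting this compatibility was provided by Gabber's machinery~\cite{Fujiwara}; the Milnor $K$-theoretic analog rests on Kerz's work and a careful bookkeeping of residue maps in Kato's Gersten complex. A closely related difficulty is the vanishing of $H^{d+1}_{X_{\rm sing}}(X, \sK^M_{d,X})$ used in the surjectivity argument, which requires controlling the Nisnevich cohomological dimension of $\sK^M_{d,X}$ on $X_{\rm sing}$ and ultimately also reduces to Gersten-type input on the singular locus.
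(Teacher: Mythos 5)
This statement is not proved in the paper at all: it is imported verbatim from \cite[Theorem~4.1]{KG}, and the surrounding text only defines $cyc^M_X$ on generators via Kato's computation of $H^d_{\{x\}}(X_{\rm Zar},\sK^M_{d,X})$. Your proposal therefore has to stand on its own as a proof of the cited result, and as written it has two genuine gaps.

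First, the factorization step. A Cartier curve $\nu\colon C\hookrightarrow X$ is \emph{not} automatically a regular closed immersion: the Levine--Weibel condition only forces $C$ to be cut out by a regular sequence along $C\cap X_{\rm sing}$ and at its generic points, and a reduced curve in the smooth locus of a $d$-fold with $d\ge 3$ need not be l.c.i. (this is repairable via \cite[Lemma~3.5]{BK}, which the paper uses elsewhere for exactly this purpose). Much more seriously, the pushforward $\nu^M_*\colon H^1(C_{\rm Nis},\sK^M_{1,C})\to H^d(X_{\rm Nis},\sK^M_{d,X})$ on which everything hinges is not available off the shelf when $X$ is singular: Rost's transfer machinery lives on the Gersten complex, which computes $H^*(-,\sK^M)$ only on regular schemes; deformation to the normal cone needs a homotopy-invariance input that fails for $\sK^M$-cohomology of singular schemes; and a trace map along $\P^N_X\to X$ with $X$ singular also has to be built from scratch. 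Constructing such pushforwards compatibly with the local cycle classes is precisely the technical core of \cite{KG}, and cannot be treated as a black box by analogy with Gabber's \'etale Gysin maps from \cite{Fujiwara}.

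Second, the surjectivity argument points the wrong way. Surjectivity of the restriction $H^d(X,\sK^M_{d,X})\to H^d(X_{\rm reg},\sK^M_{d})$ (which is what a vanishing of $H^{d+1}_{X_{\rm sing}}$ would buy) does not show that $H^d(X,\sK^M_{d,X})$ is generated by classes of regular closed points. What is needed is (i) that every class is supported on a finite set of closed points --- a coniveau argument using $\varinjlim_U H^d(U,\sK^M)=0$ over dense opens together with Kato's isomorphism $H^d_{\{x\}}(X,\sK^M_{d,X})\cong K^M_0(k(x))$ at regular $x$ --- and (ii) that the image of $H^d_{X_{\rm sing}}(X,\sK^M_{d,X})\to H^d(X,\sK^M_{d,X})$ is already contained in the image of $cyc^M_X$, which is exactly what your localization sequence reduces the problem to. Your appeal to \cite[Lemma~1.3]{ESV} does not address (ii): that lemma moves $0$-cycles and Cartier curves, not cohomology classes supported on the singular locus. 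Both points require a real argument, which is why the paper cites \cite{KG} rather than proving the statement.
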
  
  
With this result at disposal, we can consider the following diagram.  
\begin{equation}\label{eq:restrictionMilnor}  
	\begin{tikzcd}  
		\sZ_1(\sX)^g_{\Lambda} \arrow[r] \arrow[d, "\tilde{\rho}"] & \CH_1(\sX)_\Lambda  \arrow{r}{cyc^M_{\sX}}[swap]{\cong}   \arrow[d, dashrightarrow, "\rho"]&  H^d(\sX_{\rm Nis}, \sK^M_{d, \sX}) \arrow[d, "\rho^M"]\\  
	\sZ_0(X, X_{\rm sing})_{\Lambda} \arrow[r] & \CH_0^{LW}(X)_{\Lambda}  \arrow[r, "cyc^M_X"] &  H^d(X_{\rm Nis}, \sK^M_{d, X})  
	\end{tikzcd}  
\end{equation}  
Note that the outer rectangle in \eqref{eq:restrictionMilnor} commutes, since the left vertical map is the restriction on cycles \eqref{eq:restriction-map-generators}, the right vertical map is the restriction homomorphism on Milnor $K$-theory and the composite horizontal maps are by definition the cycle class maps. We can therefore ask wether there exists a homomorphism making the left square commutative as well, i.e., if the map $\tilde{\rho}$ descends to a morphism between the Chow groups.   
  
This is clearly implied by the following Conjecture.  
\begin{conj}[Bloch-Quillen formula] The cycle class map $cyc^M_X$ is an isomorphism. \label{conj:Bloch-Quillen}  
\end{conj}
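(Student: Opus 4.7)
Since the final statement is recorded as a conjecture rather than a theorem, what follows is a strategy together with an explanation of where the genuine difficulty lies, rather than a complete proof. Surjectivity of $\cyc^M_X$ is already given by \cite[Theorem~4.1]{KG}, so the conjecture reduces entirely to the injectivity of $\cyc^M_X \colon \CH_0^{LW}(X)_\Lambda \to H^d(X_{\rm Nis}, \sK^M_{d, X})$.

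The natural plan is induction on $d = \dim(X)$. For the base case $d = 1$, a reduced projective curve $X$ over $k$ satisfies $\CH_0^{LW}(X) \cong \Pic(X)$ by \cite[Lemma~3.12]{BK}, while Kerz's identification $\sK^M_{1, X} \cong \sO_X^\times$ combined with the Kummer sequence yields $H^1(X_{\rm Nis}, \sK^M_{1,X})_\Lambda \cong \Pic(X)_\Lambda$; one checks that under these identifications $\cyc^M_X$ becomes the identity. For the inductive step in dimension $d \ge 2$, given a $0$-cycle $\alpha \in \sZ_0(X, X_{\rm sing})_\Lambda$ representing a class in $\ker(\cyc^M_X)$, I would embed $X \subset \P^N_k$ and run a Bertini argument in the spirit of \secref{sec:Bertini} (now applied to $X$ over $k$) to produce a hypersurface section $Y \subset X$ of degree $\gg 0$ that passes through the support of $\alpha$, is regular there, meets $X_{\rm sing}$ properly, and has otherwise mild singularities. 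The cycle $\alpha$ then lives on $Y$, and functoriality of $\cyc^M_{(-)}$ under Gysin restriction to $Y$ (combined with the description of $H^d(X_{\rm Nis}, \sK^M_{d,X})$ via a Kato-type complex) should force $\cyc^M_Y(\alpha) = 0$ in $H^{d-1}(Y_{\rm Nis}, \sK^M_{d-1, Y})$. Inductively, $\alpha = 0$ in $\CH_0^{LW}(Y)_\Lambda$, and push-forward along the closed embedding $Y \inj X$ (iterating the good-curve machinery of \cite[\S~3]{BK}) would conclude $\alpha = 0$ in $\CH_0^{LW}(X)_\Lambda$.

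The main obstacle, and the reason this statement is phrased as a conjecture, is that two ingredients in the above sketch are not currently available in the required generality. First, one needs a Gersten-style resolution of $\sK^M_{d, X}$ on the Nisnevich site of the singular scheme $X$, extending Kerz's resolution in the smooth case and providing Gysin morphisms along singular hypersurface sections; this is exactly the point at which the singular Bloch-Quillen formula parts company from the smooth case. Second, one needs a push-forward on Levine-Weibel Chow groups along higher-codimension closed immersions into singular varieties that is compatible with $\cyc^M$ and with the Levine-Weibel equivalence generated by \emph{arbitrary} good curves as in \defref{defn:0-cycle-S-1}, not merely by Cartier curves. The introduction notes that with $\Lambda$-coefficients the non-$\A^1$-invariant phenomena obstructing such comparisons should disappear; the conjecture is essentially the assertion that this expectation can be made effective at the level of the Bloch-Quillen formula, and it is the simultaneous construction of both a singular Gersten resolution and a matching push-forward calculus that constitutes the real difficulty.
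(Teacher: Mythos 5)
The statement you were asked to prove is stated in the paper as a \emph{conjecture}, and the paper offers no proof of it: its entire treatment consists of the remark that the regular case is due to Bloch and Kato, that the $d=1$ case is the chain $\CH_0^{LW}(X)\xrightarrow{\cong}\Pic(X)\cong H^1(X,\sO_X^\times)$, and then a list (Theorem 5.4) of the known singular cases, each established elsewhere (\cite{PW}, \cite{LW}, \cite{LevineBloch}, \cite{BKS}, \cite{KG}, \cite{KrishnaCFT}). So there is no in-paper argument to compare yours against, and your decision to present a strategy plus an honest account of the obstructions is the right call. Your reduction to injectivity via \cite[Theorem~4.1]{KG} and your base case $d=1$ agree exactly with what the paper says; the inductive Bertini/hypersurface-section scheme for $d\ge 2$ is your own addition and does not appear in the paper in any form. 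It is a reasonable heuristic, and the two obstacles you isolate (a Gersten-type resolution of $\sK^M_{d,X}$ on a singular $X$ with Gysin functoriality, and a push-forward calculus on $\CH_0^{LW}$ compatible with $cyc^M$) are indeed the kind of input that the cited proofs of the known cases have to work around by dimension-specific or singularity-specific arguments. One small caution: the conjecture as stated in the paper is about $cyc^M_X$ with $\Lambda$-coefficients as set up in Section 5.3, but Theorem 5.4 records the known cases \emph{with integral coefficients}; your sketch slides between the two, and for a putative general proof you should fix which coefficient ring you are working with, since the divisibility arguments available with $\Z/m$-coefficients (as in Lemma 5.2) are not available integrally. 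No genuine error to flag, since nothing is claimed to be proved.
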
  
If $X$ is regular, this is a well-known fact. It was originally proved by Bloch in \cite{BlochK2} for surfaces, and generalized by Kato \cite{Kato} in higher dimension. If $X$ is of dimension $1$, it can be interpreted as the chain of isomorphisms (with integral coefficients)  
\[ \CH_0^{LW}(X)\xrightarrow{\cong} \Pic(X) \cong H^1(X, \cO_X^\times) \]  
where the cohomology is taken with respect to the Zariski or the Nisnevich topology.  
  
For singular varieties of dimension $\geq 1	$, the status of this conjecture is summarized here.  
\begin{thm}\label{thm:BQ-true}Conjecture \ref{conj:Bloch-Quillen} is true in the following cases, with integral coefficients.   
	\begin{romanlist}  
	\item $X$ is a quasi-projective surface with isolated singularities, over any field $k$.  
	\item $X$ is a quasi-projective surface with arbitrary singularities.  
	\item $X$ is an affine surface over any perfect field.  
	\item $X$ is projective and regular in codimension $1$, over an algebraically closed field.  
	\item $X$ is quasi-projective with isolated singularities over a finite field.   
	\end{romanlist}  
\end{thm}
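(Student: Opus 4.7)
The statement is a summary compiling results from the literature on the Bloch-Quillen formula for singular schemes, so the proof plan is to identify, for each item, the source that establishes it and to highlight the common strategy. In every case, the surjectivity of $cyc_X^M$ is already guaranteed by the theorem of \cite{KG} recalled above (applied with integral coefficients in the stated ranges), so the real content is to produce a left inverse, or equivalently to show that $cyc_X^M$ is injective on $\CH_0^{LW}(X)$.

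The main strategy in dimension two is to reduce the Bloch-Quillen formula to the isomorphism between $\CH_0^{LW}(X)$ and $F_0K_0(X)$, the subgroup of $K_0(X)$ generated by Koszul classes of smooth closed points (as in \cite{LW} and \cite{Levine-2}), and then to exploit a Bloch-type formula relating $F_0K_0(X)$ to $H^2(X_{\rm Nis},\sK_{2,X}^M)$ via the Gersten resolution on the regular locus together with a careful analysis of the local contribution at the singular points. For (i), this is essentially the content of the Bloch formula proved by Pedrini--Weibel (\cite{PW}, \cite{PW-div}) for isolated surface singularities, using that at an isolated singular point the relevant relative $K$-theory behaves well. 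Case (ii) is the refinement that removes the isolated singularity hypothesis; this requires the analysis carried out in the authors' and collaborators' work (the dimension-two case of \cite{BKS}), whose main input is a careful construction of Cartier curves through non-isolated singular loci together with Kerz's improved Milnor $K$-theory to handle small residue fields. Case (iii) is the affine surface case, which is cleaner because vanishing of certain higher cohomology groups of $\sK_{2,X}^M$ collapses the spectral sequence; here the reference is the corresponding part of \cite{KrishnaSrinivas} (or its refinement in \cite{BKS}).

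For higher-dimensional situations, the approach is different. In (iv), regularity in codimension one over an algebraically closed field allows one to use a moving lemma for $0$-cycles to represent any class in $\CH_0^{LW}(X)$ by a cycle supported on the regular locus and connected by Cartier curves contained in the codimension-one regular part; this reduces the injectivity statement to the classical Bloch--Kato formula on the smooth part together with a continuity argument, and is carried out in \cite{Krishna3Fold} under this regularity assumption. In (v), working over a finite field forces a separate treatment because the usual pro-$\ell$ trick to pass to an infinite base field is not available for $\CH_0^{LW}$; here one invokes Kerz's improved Milnor $K$-theory combined with the duality techniques of \cite{KG} (and the isolated singularity hypothesis) to conclude that the surjection $cyc_X^M$ is in fact bijective.

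The most delicate case, and what I expect to be the main obstacle if one were reproving everything from scratch, is (ii): handling non-isolated singularities on a surface forces one to confront the fact that Cartier curves through a singular point need no longer be regularly embedded, and the corresponding Milnor $K$-theoretic computations at the local ring of a non-isolated singular point do not reduce to the classical Kato computation. This is where the modified Chow group $\CH_0^{BK}(X)$ of \cite{BK} and the push-forward along l.c.i.\ morphisms from good curves, already used in \S\ref{sec:Factor-gamma}, play the crucial role, and where the argument genuinely goes beyond the isolated singularity framework of \cite{PW}.
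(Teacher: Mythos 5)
Your overall reading of the statement is right: this theorem is a compilation of known cases of the Bloch--Quillen formula, and its ``proof'' in the paper consists of attributions together with the observation that surjectivity of $cyc^M_X$ is already supplied by the quoted theorem of \cite{KG}. For items (i) and (ii) your attributions essentially match the paper's: (i) goes back to Pedrini--Weibel \cite{PW} (with the affine case in \cite{LW}), and (ii) is Levine's theorem \cite{LevineBloch} over algebraically closed fields, extended to arbitrary perfect fields in \cite{BKS} using the modified Chow group of \cite{BK} --- your description of why non-isolated singularities are the delicate point is consistent with this.

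However, your sources for (iii), (iv) and (v) do not establish those statements, and this matters because for a survey theorem the attribution \emph{is} the proof. For (iii), the affine surface case over an arbitrary perfect field, you cite \cite{KrishnaSrinivas}; that paper concerns normal surfaces, whose singularities are isolated and hence already covered by (i), so it does not give (iii) in the stated generality. For (iv) you cite \cite{Krishna3Fold}, which treats threefolds with isolated singularities and says nothing about projective schemes of arbitrary dimension that are regular in codimension one; your sketch via a moving lemma on the regular locus is not what is done in the literature for this case. Both (iii) and (iv) are in fact Theorems 1.1 and 1.2 of \cite{KG}, by arguments independent of \cite{BKS}. For (v), the finite field case with isolated singularities, the correct source is \cite[Theorem 1.6]{KrishnaCFT}, not a combination of Kerz's improved Milnor $K$-theory with ``duality techniques of \cite{KG}''; your remark that the pro-$\ell$ trick is unavailable for $\CH_0^{LW}$ is a fair heuristic for why finite fields need separate treatment, but the argument you gesture at is not the one on record. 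As written, three of the five items rest on references that do not prove them.
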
  
Item i) was first verified by Pedrini and Weibel in \cite{PW}, and in the affine case by Levine and Weibel \cite{LW}. The case ii) is due to Levine \cite{LevineBloch} in the case of algebraically closed fields. A modification of Levine's argument can be used to extend the result to the case of an arbitrary (perfect) ground field, provided that one replaces the Levine-Weibel Chow group with its modified version introduced in \cite{BK}. This is done in \cite{BKS}.

The affine case iii) and the case of singularities in codimension at least 2 iv) are shown in \cite[Theorem 1.1 and 1.2]{KG} (the arguments are independent from the arguments used in \cite{BKS}), while case v) is \cite[Theorem 1.6]{KrishnaCFT}.  
Older results in the affine case where obtained by Barbieri-Viale in \cite{BV-Bloch}.  
We can now give another application of Theorem \ref{thm:Main-1}.  
\begin{cor}\label{cor:iso} Let $\sX$ and $A$ be as above. Then the restriction homomorphism $\tilde{\rho}$ of \eqref{eq:restriction-map-generators} factors through the rational equivalence classes if $k$ is finite and if $X$ has only isolated singularities, or if $\dim(X)=2$ (without restrictions on the type of singularities). In these cases, it induces an isomorphism  
	\[\rho\colon \CH_1(\sX)_{\Lambda} \xrightarrow{\simeq}  \CH_0^{LW}(X)_{\Lambda}.\]  
	If $k$ is finite, both groups are finite.  
	\begin{proof}It is an immediate consequence of the commutative diagram \eqref{eq:restrictionMilnor},  given Theorem  \ref{thm:BQ-true}. By Corollary \ref{cor:sum**}, the induced map $\rho$ is automatically an isomorphism. Finally, by \cite[Theorem 1.2]{KrishnaCFT}, the group $ \CH_0^{LW}(X)_{\Lambda}$ is finite if the residue field $k$ is finite.   
		\end{proof}  
\end{cor}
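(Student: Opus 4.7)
The plan is to combine the commutative diagram \eqref{eq:restrictionMilnor} with the Bloch--Quillen formula for singular varieties, which holds in both listed cases by Theorem \ref{thm:BQ-true}. The entire argument is essentially a diagram chase, so the work is in identifying the right case of Theorem \ref{thm:BQ-true} and then invoking Corollary \ref{cor:sum**}.

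First, I would verify that in each of the two situations the cycle class map $cyc^M_X \colon \CH_0^{LW}(X)_\Lambda \to H^d(X_{\rm Nis}, \sK^M_{d, X})$ is an isomorphism: if $\dim(X) = 2$, this is case (ii) of Theorem \ref{thm:BQ-true}; if $k$ is finite and $X$ has only isolated singularities, this is case (v). Since both of those statements are proved with integral coefficients, tensoring with $\Lambda$ preserves the isomorphism.

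Next I would show that $\tilde{\rho}$ descends to a morphism $\rho \colon \CH_1(\sX)_\Lambda \to \CH_0^{LW}(X)_\Lambda$. The key point is that the upper horizontal cycle class map $cyc^M_{\sX}$ in \eqref{eq:restrictionMilnor} is an isomorphism by the classical Bloch--Quillen formula for the regular scheme $\sX$, available in equal characteristic via Kerz's resolution of the Gersten conjecture for Milnor $K$-theory. Given $\alpha \in \sZ_1^g(\sX)_\Lambda$ whose class vanishes in $\CH_1(\sX)_\Lambda$, its image under $cyc^M_{\sX}$ is therefore zero; by commutativity of the outer rectangle of \eqref{eq:restrictionMilnor} the image of $\tilde{\rho}(\alpha)$ in $H^d(X_{\rm Nis}, \sK^M_{d, X})$ vanishes, and the injectivity of $cyc^M_X$ established in the previous step forces the class of $\tilde{\rho}(\alpha)$ in $\CH_0^{LW}(X)_\Lambda$ to be zero. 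This yields the factorization $\rho$.

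With $\rho$ constructed, Corollary \ref{cor:sum**} applies verbatim and gives that $\rho$ is automatically an isomorphism; the finiteness of $\CH_0^{LW}(X)_\Lambda$ when $k$ is finite, which transfers to $\CH_1(\sX)_\Lambda$ via $\rho$, is exactly \cite[Theorem 1.2]{KrishnaCFT}. There is no serious obstacle in the present proof: all the hard mathematical content has been absorbed into the construction of the Milnor $K$-theory cycle class maps and into the specific cases of the Bloch--Quillen conjecture collected in Theorem \ref{thm:BQ-true}. The only genuine difficulty, were one to try to extend the corollary beyond the two listed cases, would be to prove the Bloch--Quillen formula $cyc^M_X$ in greater generality.
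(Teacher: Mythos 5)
Your proposal is correct and follows essentially the same route as the paper: the descent of $\tilde{\rho}$ is obtained by chasing the outer rectangle of \eqref{eq:restrictionMilnor} using the injectivity of $cyc^M_X$ supplied by cases (ii) and (v) of Theorem \ref{thm:BQ-true}, the isomorphism then comes from Corollary \ref{cor:sum**}, and finiteness from \cite[Theorem 1.2]{KrishnaCFT}. You have simply spelled out the diagram chase that the paper leaves implicit (including the harmless extra observation that $cyc^M_{\sX}$ is an isomorphism, where only its well-definedness on $\CH_1(\sX)_\Lambda$ is actually needed).
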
  
  
%\begin{rem}Let $X$ be a quasi-projective surface over a field $k$. In  \cite{PW-div}, it is stated as Theorem A that there exists a chain of isomorphisms for every $n\in k^\times$  
%	\[\CH_0^{LW}(X)/n \cong H^2(X_{\rm Zar}, \sK_{2, X})/n \cong H^2(X_{\rm Zar}, \sK_{2, X}/n) \cong H^2(X_{\rm Zar}, \mathcal{H}^2(\mu_n^{\otimes 2})) \]  
%where the last term is the second cohomology group fo the Zariski sheaf associated to the presheaf $U\mapsto H^2_{\et}(U, \mu_n^{\otimes 2})$.  This result, without restrictions on the singularities of $X$ or on the ground field $k$ relies on an unpublished result of Levine \cite{Levine-unp}, about the existence of Chern classes in the singular case. Assuming this, we can remove the assumption that $X$ has isolated singularities in the two-dimensional case from the above Corollary, giving a complete description of $\CH_1(\sX)_\Lambda$ in the equicharacteristic case.   
%\end{rem}  

\vskip .3cm  
  
\noindent\emph{Acknowledgments.}   
This project started while the first-named author was visiting the Tata Institute of Fundamental Research  in November 2016, and the final part of this project was completed during the  
extended stay of the authors at the Hausdorff Research Institute for  
Mathematics (HIM), Bonn in 2017. The authors would like to thank both  
institutions for invitation and support. The authors would also like to thank  
H{\'e}l{\`e}ne Esnault, Moritz Kerz and Olivier Wittenberg for sending  
several valuable comments and suggestions on an earlier draft of this  
work, as well as the anonymous referee for their help in improving the exposition of the paper.

%\vskip .3cm  
  
%\enlargethispage{20pt}  

\end{document}